\theoremstyle{plain}
\newtheorem{thm}{Theorem}[section]
\newtheorem{lemma}[thm]{Lemma}
\newtheorem{prop}[thm]{Proposition}
\newtheorem{conj}[thm]{Conjecture}
\theoremstyle{definition}
\newtheorem{definition}[thm]{Definition}
\newtheorem{remark}[thm]{Remark}
\newtheorem{assumption}[thm]{Assumption}
\def\mequal{\mathrel{\mathpalette\@mvereq{\hbox{\sevenrm m}}}} 
\def\@mvereq#1#2{\lower.5\p@\vbox{\baselineskip\z@skip\lineskip1.5\p@
    \ialign{$\m@th#1\hfil##\hfil$\crcr#2\crcr=\crcr}}}
\def\partr#1#2{/\kern-.08333em/_{#1,#2}^{\phantom{.}}}
\def\invpartr#1#2{/\kern-.08333em/_{#1,#2}^{-1}} 
\def\hpartr#1#2{/\kern-.08333em/_{#1,#2}^{h}}
\def\Epartr#1#2{/\kern-.08333em/_{#1,#2}^{E}}
\def\newdot{{\kern.8pt\cdot\kern.8pt}}
\def\,{\relax\ifmmode\mskip\thinmuskip\else\thinspace\fi}
\def\{{\relax\ifmmode\lbrace\else $\lbrace$\fi}
\def\}{\relax\ifmmode\rbrace\else $\rbrace$\fi}
\DeclareSymbolFont{rsfs}{U}{rsfs}{m}{n}
\DeclareSymbolFontAlphabet{\mathscr}{rsfs}
\DeclareFontFamily{U}{rsfs}{}
\DeclareFontShape{U}{rsfs}{m}{n}{%
   <5> <6> rsfs5
   <7> rsfs7
   <8> <9> <10> <10.95> <12> <14.4> <17.28> <20.74> <24.88> rsfs10
}{}
\DeclareFontFamily{U}{msb}{}
\DeclareFontShape{U}{msb}{m}{n}{
  <5> <6> <7> <8> <9> gen * msbm
  <10> <10.95> <12> <14.4> <17.28> <20.74> <24.88> msbm10
  }{}
\DeclareSymbolFont{AMSmsb}{U}{msb}{m}{n} 
\DeclareSymbolFontAlphabet{\Bbb}{AMSmsb}
\font\sevenrm=cmr7
\newcommand{\EE}{{\Bbb E}}
\newcommand{\E}{{\Bbb E}}
\newcommand{\NN}{{\Bbb N}}
\newcommand{\RR}{{\Bbb R}}
\newcommand{\R}{{\Bbb R}}
\newcommand{\PP}{{\Bbb P}}
\newcommand{\SC}{{\mathscr C}}
\newcommand{\SD}{{\mathscr D}}
\newcommand{\SF}{{\mathscr F}}
\newcommand{\SL}{{\mathscr L}}
\newcommand{\SP}{{\mathscr P}}
\newcommand{\SU}{{\mathscr U}}
\newcommand{\df}{\coloneqq}
\newcommand{\sign}{\mathrm{sign}}
\newcommand{\bq}{\begin{eqnarray*}}
\newcommand{\bqn}[1]{\begin{eqnarray}\label{#1}}
\newcommand{\eq}{\end{eqnarray*}}
\newcommand{\eqn}{\end{eqnarray}}
\newcommand{\fo}{\forall\ }
\newcommand{\lan}{\lt\langle}
\newcommand{\lVe}{\lt\Vert}
\newcommand{\ran}{\rt\rangle}
\newcommand{\rVe}{\rt\Vert}
\newcommand{\lt}{\left}
\newcommand{\me}{\medskip}
\newcommand{\ri}{\rightarrow}
\newcommand{\rt}{\right}
\newcommand{\st}{\,:\,}
\newcommand{\un}{\mathds{1}}
\def\mathpal#1{\mathop{\mathchoice{\text{\rm #1}}%
   {\text{\rm #1}}{\text{\rm #1}}%
   {\text{\rm #1}}}\nolimits}
\def\id{\mathpal{id}}
\newcommand{\inte}{\mathrm{int}}
\def\di{\displaystyle}
\def\f{\frac}
\def\a{\alpha }
\def\b{\beta }
\def\D{\Delta }
\def\d{\delta }
\def\e{\varepsilon }
\def\g{\gamma }
\def\l{\lambda }
\def\n{\nabla }
\def\Om{\Omega }
\def\om{\omega }
\def\s{\sigma }
\newcommand{\na}{\nabla}
\newcommand{\pa}{\partial}
\newcommand{\usm}{\underline{\smash{\mu}}}
\newcommand{\wi}{\widetilde}
\newcommand{\sm}{\smallskip}
\newcommand{\cB}{\mathcal{B}}
\newcommand{\cC}{\mathcal{C}}
\newcommand{\cD}{\mathcal{D}}
\newcommand{\cF}{\mathcal{F}}
\newcommand{\cK}{\mathcal{K}}
\newcommand{\cL}{\mathcal{L}}
\newcommand{\cM}{\mathcal{M}}
\newcommand{\cR}{\mathcal{R}}
\newcommand{\cV}{\mathcal{V}}
\newcommand{\iy}{\infty}
\newcommand{\fA}{\mathfrak{A}}
\newcommand{\fG}{\mathfrak{G}}
\newcommand{\fF}{\mathfrak{F}}
\newcommand{\ff}{\mathfrak{f}}
\newcommand{\fg}{\mathfrak{g}}
\newcommand{\fs}{\mathfrak{s}}
\newcommand{\ZZ}{\mathbb{Z}}
\newcommand{\lin}{\llbracket}
\newcommand{\rin}{\rrbracket}
\DeclareMathOperator*{\esssup}{ess\,sup}
\renewcommand{\theequation}{\arabic{section}.\arabic{equation}}
\newcommand{\barD}{D}
\begin{document}
\title[Couplings of Brownian motions with set-valued dual processes]{Couplings of Brownian motions with set-valued dual processes on Riemannian manifolds}
\author[M. Arnaudon]{Marc Arnaudon} \address{Univ. Bordeaux, CNRS, Bordeaux INP,\hfill\break\indent Institut de Math\'ematiques de Bordeaux, UMR 5251, F. 33405, Talence, France} \email{marc.arnaudon@math.u-bordeaux.fr}
\author[K.A. Coulibaly-Pasquier]{Kol\'eh\`e Coulibaly-Pasquier} \address{Institut \'Elie Cartan de Lorraine, UMR 7502\hfill\break\indent Universit\'e de Lorraine and CNRS} \email{kolehe.coulibaly@univ-lorraine.fr}
\author[L. Miclo]{Laurent Miclo} \address{{
Institut de Math\'ematiques de Toulouse, UMR 5219, \hfill\break\indent
Toulouse School of Economics, UMR 5314,\hfill\break\indent
 CNRS and Universit\'e de Toulouse}} \email{{laurent.miclo@math.cnrs.fr}}
 \thanks{Funding from the  grant ANR-17-EURE-0010 is aknowledged by L.M}
\date{\today\ \emph{ File: }\jobname.tex}
\maketitle

\begin{abstract}
The purpose of this paper is to construct a Brownian motion $X \df (X_t)_{t\geq 0}$ taking values in a Riemannian manifold $M$, together with a compact valued process $D\df (D_t)_{t\geq 0}$ such that, at least for small enough $\SF^D$-stopping time $\tau> 0$ and conditioned by $\SF_\tau^D$, the law of $X_\tau$ is the normalized Lebesgue measure on $D_\tau$. This intertwining result is a generalization of Pitman theorem. We  first construct regular intertwined processes related to Stokes' theorem. Then using several limiting procedures we  construct synchronous intertwined, free intertwined, mirror intertwined processes. The local times of the Brownian motion on the (morphological) skeleton or the boundary of~$D$ plays an important role. Several examples with moving intervals, discs, annulus, symmetric convex sets are investigated. \par\smallskip\noindent
\textsc{Keywords.} Brownian motions on Riemannian manifolds, intertwining relations, set-valued dual processes, couplings of primal and dual processes, stochastic mean curvature evolutions, boundary and skeleton local times, generalized Pitman theorem.
\par\smallskip\noindent
\textsc{MSC2020} primary: 
60J60, secondary: 60J65, 60H10, 58J65, 53E10, 60J55, 35K93.
\end{abstract}

\section{Introduction and main results}
\label{Section0}
\setcounter{equation}0

Markov intertwinings were introduced by Rogers and Pitman \cite{MR624684} to give a direct proof of the famous relation between the Brownian motion and the Bessel-3 process due to Pitman \cite{MR0375485}.
These relations were next used by Yor and his coauthors (see e.g.\ \cite{Yor_intertwinings,MR1654531}) to get identities in law and by Diaconis and Fill \cite{MR1071805} to construct strong stationary times.
For a historical account of the subsequent development of the Markov intertwining  technique, consult for instance {Pal} and {Shkolnikov} \cite{2013arXiv1306.0857P}.\par
At an algebraic level, a \textbf{Markov intertwining relation} is a (directed) weak similar relation, from a Markov semi-group $(\bar P_t)_{t\geq 0}$ on a measurable state space $(\bar M,\bar \cM)$ to another Markov semi-group $(P_t)_{t\geq 0}$ on a measurable state space $(M,\cM)$, consisting of a Markov kernel (called the \textbf{link}) $\Lambda$ from $(\bar M,\bar \cM)$ to $(M,\cM)$ such that
\bqn{inter}
\fo t\geq 0,\qquad \bar P_t\Lambda&=&\Lambda P_t\eqn
in the sense of the composition of Markov kernels. 
Depending on non-degeneracy properties of $\Lambda$, such a relation is more or less strong. Especially when Markov semi-groups are described by their generators,  \eqref{inter}
is often replaced by
\bqn{inter2}
\bar L \Lambda&=&\Lambda L\eqn
where $\bar L$ and $L$ are respectively the generators of  $(\bar P_t)_{t\geq 0}$ and  $(P_t)_{t\geq 0}$. But then one has to be more careful with the meaning of generators (e.g.\ in the sense of martingale problems) and their domains,
in particular the domains are transported via \eqref{inter2}.\par
To be more useful from a probabilist point of view, it is convenient to convert \eqref{inter2} into a coupling between $\bar X\df(\bar X_t)_{t\geq 0}$ and $X\df(X_t)_{t\geq 0}$, two Markov processes respectively associated to $\bar L$ and $L$ (called the \textbf{dual} and \textbf{primal processes}), so that the following relations hold for the conditional laws:
\bqn{cou1}
\fo t\geq 0,\qquad \cL(X_t\vert \bar X_{[0,t]})&=&\Lambda(\bar X_t,\cdot)\eqn
\par
In addition, one asks that $(\bar X_t)_{t\geq 0}$ can be constructed from $(X_t)_{t\geq 0}$ in an adapted way, meaning
\bqn{cou2}
\fo t\geq 0,\qquad\cL(\bar X_{[0,t]}\vert X)&=&\cL(\bar X_{[0,t]}\vert X_{[0,t]})\eqn\par
 Yor was wondering about  such couplings  between some piecewise linear Markov processes and squared Bessel processes, in order to simplify his approach to certain properties of the former processes similar to those of the latter, see the end of the introduction of \cite{Yor_intertwinings}.\par
Such couplings are crucial for the constructions of strong stationary times, as explained by Diaconis and Fill \cite{MR1071805} in a discrete time and finite setting.
More precisely, in this situation $X$ is an ergodic Markov chain with invariant probability $\pi$ and $\bar X$ is a Markov chain absorbed in a unique point.
A strong stationary time $\tau$ for $X$ is a finite stopping time for $X$ (and some independent randomness) such that $\tau$ and $X_\tau$ are independent and $X_\tau$ is distributed according to $\pi$. Taking into account \eqref{cou1} and \eqref{cou2}, one can see that the absorption time for $\bar X$ is a strong stationary time for $X$. 
\par
Strong stationary times are important for two reasons (cf.\ Diaconis and Fill \cite{MR1071805}):\par
- They enable to sample \textit{exactly} the invariant probability $\pi$, contrary to the usual approximations provided by Monte Carlo techniques.
\par
- They provide a  probabilistic alternative to functional analysis approaches for the quantitative investigation of convergence to equilibrium. More precisely, for any strong stationary time $\tau$, we have
\bq
\fo t\geq 0,\qquad \fs(\cL(X_t),\pi)&\leq & \PP[\tau>t]\eq
where  the \textbf{separation discrepancy} $\fs(\mu,\pi)$ between two probability measures $\mu$ and $\pi$  is defined by
\bq
\fs(\mu,\pi)&\df&\esssup_{\pi}\lt(1-\f{d\mu}{d\pi}\rt)\eq
(where $d\mu/d\pi$ is the Radon-Nikodym density). The separation discrepancy dominates the total variation norm and gives positivity properties of $\mu$ with respect to $\pi$.
In the context of convergence to equilibrium, it is very difficult to estimate the discrepancy of  $\fs(\cL(X_t),\pi)$ via functional inequality techniques (see e.g.\ the book \cite{MR3155209} of Bakry, Gentil and Ledoux).
\par
In the objective of constructing strong stationary times via intertwining duality, there are particular dual processes $\bar X$ which are taking values in $\cV$, the set  of measurable subsets of $M$, but in general $\bar V$ is only a subset of $\cV$, consisting in some regular subsets.
The absorption set is the whole set $M$. The heuristic goal of intertwining duality is then to construct random subsets $\bar X_t\subset V$ such that $X_t$ is already at equilibrium in $\bar X_t$, for all $t\geq 0$, in such a way that $\bar X$ is itself Markovian and ends up covering the whole state space $M$.
\par
In the diffusion context, set-valued intertwining dual processes started to be constructed in Fill and Lyzinski \cite{MR3571247} and \cite{MR3634282}.
In  \cite{zbMATH07470497}, set-valued  dual processes for diffusions on Riemannian manifolds were identified as stochastic perturbations of mean-curvature flows.
But the coupling of primal and dual processes were not considered in \cite{zbMATH07470497} and this is our present goal, mainly for Brownian motions on Riemannian manifolds.
As we will see, there are numerous ways to  construct such couplings (this is true in  more general contexts, see \cite{Mic2020} for the diversity of such couplings in a finite framework), but none of them is immediate and they are related to fine geometric features of the evolving subsets, such as their skeletons.
We are thus to consider synchronous intertwined, free intertwined, mirror set-valued intertwined dual processes. \par
The reader must be warned that, as it stands now in the context of multidimensional diffusions, the set-valued dual processes are not defined up to the absorption time (except in symmetric settings), and as a consequence the same will be true for our couplings, which will be defined only up to some positive stopping times.
We hope to investigate this point in future works, to end the construction of  strong stationary times for Brownian motion on compact Riemannian manifolds, which remains our remote motivation. Other motivations for the couplings of primal and dual processes in the context of diffusions can be found in Machida \cite{2019arXiv190807559M} and \cite{Mic2020}.
\par\me

Let us now present more precise definitions. Here the state space  $M$ is a $d$-dimensional complete Riemannian manifold.  Denote respectively by $\rho$, $\mu$ and $\usm$, the Riemannian distance,  the Lebesgue measure on $M$ and the corresponding $(d-1)$-Hausdorff measure. The main objective of this paper is to construct couplings of primal diffusions processes with their set-valued dual intertwined processes. This will partially solve  Conjecture~6 in~\cite{zbMATH07470497} in the case of Brownian motion $(X_t)_{t\ge 0}$ and stochastic modified mean curvature flow $(D_t)_{t\ge 0}$ (which were generically denoted $(\bar X_t)_{t\ge 0}$ above). This conjecture says that an intertwined construction in the sense of Definition~\ref{D6.0} is always possible. 

\begin{definition}
\label{D6.0}
Consider a Markov process $D=(D_t)_{t\in[0,\tau]}$, with values in compact subsets of $M$ and continuous with respect to the Hausdorff topology,
and where $\tau$ is an a.s.\ positive stopping time in the filtration $\SF^D$ of $D$, serving as a lifetime for $D$.
We say that a Brownian motion $X=(X_t)_{t\geq 0}$ in $M$ and $D$ are intertwined when for all bounded  $\SF^D$-stopping time $\tau'$ smaller than $\tau$, {conditioned on} $\SF_{\tau'}^D$, $X_{\tau'}$ has uniform law in $D_{\tau'}$ (and in particular $X_{\tau'}\in {D}_{\tau'}$). 
More generally, for any $\SF^D$-stopping time $\wi\tau$ smaller than $\tau$,
we say that $X$ and $D$ are $\wi\tau$-intertwined when $X$ and $(D_t)_{t\in[0,\wi\tau]}$  are intertwined.
\end{definition}
This is a generic definition, below stronger topologies on subsets of $M$ will be considered. Note that the above lifetime is not necessary the explosion time, i.e.\   the exit time from all compact sets for the considered topology. In the infinite dimensional  state space of $D$, compactness does not seem an appropriate notion.
\par\sm
Our main results are Theorems \ref{C6.1},  \ref{T4.1} and \ref{T7.1} presenting such joint constructions of the primal Brownian motion $(X_t)_{t\geq 0}$ and the dual domain-valued $(D_t)_{t\geq 0}$ processes.
The coupling of Theorem \ref{C6.1} consists in the infinite-dimensional stochastic differential equation \eqref{6.5}, based on a function $f\st(x,D)\mapsto f(x,D)$ which is a deformation of the signed distance from $x\in M$ to the boundary of the domain $D$ (see Assumption \eqref{A6.1} for the precise requirements).
Theorem \ref{T4.1} is obtained by specifying some approximating  functions $f$. Given the trajectory $(X_t)_{t\geq 0}$ of the Brownian motion, we construct the domain evolution 
$(D_t)_{t\geq 0}$ using the local time of $(X_t)_{t\geq 0}$ on the skeletons of $(D_t)_{t\geq 0}$ and the mean curvatures of the normal foliations of these domains (see  \eqref{4.10}). Other approximating functions $f$ lead to Theorem~\ref{T7.1}, where the prominent role is played by the local time at the boundary.
This situation is in some sense opposite to the previous one, since
the driving Brownian motion of $(D_t)_{t\geq 0}$  is now independent from $(X_t)_{t\geq 0}$, while it is as correlated as it can be in Theorem \ref{T4.1}.
These theoretical results are illustrated by the fundamental examples of Section \ref{Section5}. First we recover the intertwining relation between the real Brownian motion and the three-dimensional Bessel process. Next we deal with rotationally symmetric manifolds.
Finally we present the application of our results to  symmetric convex domains in the plane, even if the detailed proofs are deferred to a forthcoming paper.
 \par
 To come back to our initial motivation, 
assume that $X$ and $D$ are intertwined, where the lifetime $\tau$ is the hitting/covering time by $D$ of the whole state space $M$. 
If  furthermore $\tau$ is finite (typically true when $M$ is compact), then the Riemannian measure can be normalized into a probability (called the \textbf{uniform distribution}, which is invariant and reversible for  the Brownian motion $X$) and $\tau$ is a strong stationary time for $X$. In this situation, the tail distributions of $\tau$ provide quantitative estimates for the speed of convergence of the Brownian motion toward equilibrium, in the separation sense. These estimates will need geometric ingredients such as Ricci bounds and it will be interesting to see how they will enter the game.

{ 
The needs for couplings between primal and dual processes of a Markovian intertwining relation is illustrated by \cite{ACM3:22}, where strong stationary times $\tau_n$ are constructed for the $n$-dimensional  sphere (when the subset-valued dual is starting from a singleton), satisfying
$$
\EE[\tau_n]\sim \f{\ln(n)}n
$$
and for any $r>0$,
$$
\lim_{n\to\infty}\PP\left[\tau_n>(1+r)\f{\ln(n)}n\right]=\lim_{n\to\infty}\PP\left[\tau_n<(1-r)\f{\ln(n)}n\right]=0.
$$
}

\section{Intertwined dual processes: existence in connection with Stoke's formula}
\label{Section6}
\setcounter{equation}0

In this section we make a construction of intertwined processes $X$ and $D$ based on the Stokes' Formula~\eqref{6.1} below. Consider a  compact domain $D$ in $M$ with $C^2$ boundary. Let $f : \barD \to \RR$ a $C^2$ function such that $\n f|_{\partial D}=N^D$ the normal inward vector on boundary. Then by Stoke's formula, for any $C^2$ function $g : \barD  \to \RR$, 
\begin{equation}
 \label{6.1}
 -\int_{\partial D}g d\usm =\int_{\partial D} g\langle \n f, -N^D\rangle \,d\usm =\int_Dg\Delta f\, d\mu+\int_D\langle \n g,\n f\rangle \, d\mu. 
\end{equation}

For $\a\in(0,1)$, denote by $\SD^{2+\a}$ the set of  compact connected  subsets $D$ of $M$ with $C^{2+\a}$ boundary. 
It will be more convenient to work with this state space (endowed with its natural topology) than with the larger one considered in Definition \ref{D6.0}.
Let us even restrict it further:

We fix a point $o\in M$ for convenience.
\begin{definition}
\label{D6.1}
For a given $\a\in(0,1)$, $\e>0$, 
we denote by $\cF^{\alpha,\e}$ the set of $D\in \SD^{2+\a}$ such that 
\begin{itemize}
\item $D\subset B(o,1/\e)$ the Riemannian ball centered at $o$ with radius $1/\e$;
 \item $\rho(\partial D, S(D))\geq  \e$,
where $S=S(D)$ is the skeleton of $D$ (see appendix~\ref{Section2} for details);
\item $\rho(\partial D, S^{\rm out}(D))\geq  \e$,
where $S^{\rm out}(D)$ is the outer skeleton of $D$, i.e. the skeleton of $(\barD )^c$.
\item  the coefficients of the $\alpha$-H\"olderianity {  of the second fundamental form} of $\pa D$ are bounded by $1/\epsilon$.
\end{itemize}
The set $\cF^{\alpha,\e}$ will serve as the state space of the set-valued process $(\wi D_t)_{t\in[0,{\tau_{\e}}]}$  
and ${\tau_{\e}}\in(0,+\iy]$ will be the exiting time from $\cF^{\alpha,\e}$.
This process will be a diffusion, i.e.\ a Markov process with continuous trajectories (for the topology inherited from $\SD^{2+\a}$),
and its  generator $\wi \SL$ will be defined later in~\eqref{6.7}.
We extend the trajectory $(\wi D_t)_{t\in[0,{\tau_{\e}}]}$ by taking $\wi D_t=\wi D_{\tau_{\e}}$ for any $t> {\tau_{\e}}$.
It amounts to imposing that $\wi \SL$ vanishes outside $\cF^{\alpha,\e}$.
It is possible to define in the same way $(\wi D_t)_{t\in[0,{\tau})}$ on $\cD^{2+\a}$ (which coincides with $\cup_{\e>0}\cF^{\a,\e}$), where $\tau$ is the exiting time from $\cD^{2+\a}$.
But it will be more convenient for us to work with a process with an infinite lifetime (to be able to apply Proposition \ref{pro1} in Appendix \ref{appD}) and whose set of values has a boundary which is well-separated from the skeleton.
\end{definition}

 \par
Let $\b\in \{0,\a\}$. 
For $D_0 \in   \cD^{2+\b}$ and $\d>0$ small enough, a $\d$-neighborhood of $D_0$ is defined as follow:
$$ \mathcal{V}_\d^{2+\b}(D_0) := \left\{ \inte(\exp_{\partial D_0} (f))   , f \in C^{2+ \b} (\partial D_0)  , \Vert f  \Vert_{C^{2 +\b}(\partial D_0)} < \d  \right\}, $$
where for  $f \in  C^{2+ \b} (\partial D_0)$
$$\exp_{\partial D_0} (f) := \left\{ \exp_{x}(f(x)N^{D_0}(x)) , x \in  \partial D_0 \right\} $$
($\exp$ being the exponential map in $M$), 
  and $\inte(\exp_{\partial D_0}(f))$ is the interior of the hypersurface $\exp_{\partial D_0}(f), $ oriented by the orientation of $D_0$. Let $\eta(\partial D_0)>0$ be the radius of the maximal tubular neighborhood of $\partial D_0$. 
Notice that $\d < \eta(\partial D_0)$
garantees that 
all elements of $\mathcal{V}_\d^{2+\b}(D_0)$ are regular deformations of $D_0$. Also notice that all elements $D$ of $\cF^{\alpha,\e}$ have $\eta(\partial D)\ge \e$.
 
 We identify two domains $ D_1 , D_2 \in \mathcal{V}_\d^{2+\b}(D_0) $ with the functions $ f_1, f_2 \in C^{2+ \b} (\partial D_0) $ such that 
 $ D_1 = \inte \{ \exp_{\partial D_0} (f_1)\}$ and $ D_2 = \inte \{ \exp_{\partial D_0} (f_2)\}$  and we define a local distance
\begin{equation}\label{dD}
  d_{\b,D_0}(D_1,D_2) := \Vert f_1 -f_2  \Vert_{C^{2+\b}(\partial D_0)}  . 
\end{equation}
\begin{assumption}
 \label{A6.1}
$ $
\begin{itemize}
\item The function 
\begin{align*}
f :  M\times \cF^{\alpha,\e}&\to \RR\\
 (x,D)&\mapsto f(x,D)=f^D(x)
\end{align*}
 is a $C^{2+\a}$ function in the two variables ({the differential in $D$ is in the sense of  Fr\'echet} with respect to the above local Banach structure {  defined by the distances $d_{\a, D}$}). 
The functions  $f^D$ satisfy 
\begin{equation}
\label{6.2}
\lVe \n f^D\rVe_{\infty}\leq  1,
\end{equation}
and coincide with the signed  distance to the boundary $\rho_{\partial D}^+$ (positive inside $D$ and negative outside) in a neighbourhood of $\partial D$. 
The functions $f^D$ have bounded Hessian, uniformly in $D\in\cF^{\a,\e}$.
Furthermore,  we assume that the coefficients of the $\alpha$-H\"olderianity of ${\rm Hess} f^D$ are   uniformly bounded over $\cF^{\a,\e}$.
\item 
There exists a positive integer  $m$ and a $C^1$ map
\begin{align*}
 \s_c : M\times \cF^{\alpha,\e}&\to \Gamma(TM\otimes (\RR^m)^\ast)\\
 (x,D)&\mapsto \s_c(x,D)=\s_c^D(x)\in L(\RR^m,T_xM)
\end{align*}
where $\Gamma(TM\otimes (\RR^m)^\ast)$ is the set of sections over $M$ of $TM\otimes (\RR^m)^\ast$ and 
$L(\RR^m,T_xM)$ is the set of linear maps from $\RR^m$ to $T_xM$,
such that the linear map 
\begin{equation}
\label{6.4}
\begin{split}
\s^D(x) : \RR\times \RR^m&\to T_xM \\
(w_0, w)&\mapsto\ w_0\n f^D(x)+ \sigma_{\mathrm c}^D(x)(w)
\end{split}
\end{equation}
satisfies 
\begin{equation}
\label{6.4.3}
\forall x\in \barD , \ \ \s^D(\s^D)^\ast (x)={\rm Id}_{T_xM}.
\end{equation}
\end{itemize}
\end{assumption}
\par\hfill $\square$\par\sm
\begin{remark}
The first condition of Assumption~\ref{A6.1} implies that  
\begin{equation}
 \label{6.2bis}
 \begin{split}
 \n f^D|_{\partial D}&=({\n \rho^{+}_{\partial D})}|_{\partial D}(=N^{D}) \quad\hbox{and}\\  \Delta f^D|_{\partial D}&=(\Delta {\rho^{+}_{\partial D})}|_{\partial D}(=-h^{D}).
 \end{split}
\end{equation}
where $h^D$ stands for the mean curvature on $\pa D$.
It also implies that the functions $f^D$ are uniformly Lipschitz and have uniformly bounded Laplacian. Also, for fixed $x\in \partial D$, varying $D$ successively  along a field $K$ normal to the boundary $\partial D$ and along $N^{D}$ for the second derivative: 
\begin{equation}
 \label{6.2ter}
 \begin{split}
 \langle \n f(x, \cdot ), K\rangle(x)&=-\langle N^{D}(x), K(x)\rangle \quad\hbox{and}\\  \n d f (x, \cdot)\left(N^{D},N^{D}\right)&=0
 \end{split}
\end{equation}
where $\n d f (x, \cdot)$ is the Hessian of $f$ in the second variable.

The second condition  of Assumption~\ref{A6.1}  implies that for all $u\in T_xM$, 
\begin{equation}
 \label{6.4.1}
 \|u\|^2=\langle u,\n f^D(x)\rangle^2+ \sum_{i=1}^m\langle u,\s^D_{\mathrm c}(x)(e_i)\rangle^2
\end{equation}
for $e_1,\ldots ,e_m$ an orthonormal basis of $\RR^m$. In particular, if $x\in \partial D$, taking $u=\n f^D(x)=N^D(x)$, we get since $\|N^D(x)\|=1$:
\begin{equation}
 \label{6.4.2}
 0=\langle \n f^D(x),\s(x)(e_i)\rangle, \quad i=1,\ldots m.
\end{equation}
\end{remark}
\begin{prop}
\label{P6.3}
Assumption~\ref{A6.1} can always be realized, with any $\a\in(0,1)$ and $\e>0$. 
\end{prop}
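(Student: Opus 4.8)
The plan is to produce $f$ and $\sigma_{\mathrm c}$ explicitly: $f^{D}$ will be a cut-off of the signed distance to $\partial D$, and $\sigma_{\mathrm c}^{D}$ will be the component of an ambient orthogonal projection complementary to $\nabla f^{D}$. Fix once and for all a smooth nondecreasing $\chi\colon\R\to\R$ with $\chi(s)=s$ for $|s|\le\e/3$, $\chi$ constant for $|s|\ge\e/2$, $0\le\chi'\le1$ everywhere, and chosen so that $s\mapsto\sqrt{1-\chi'(s)^{2}}$ is smooth (i.e.\ $\chi'$ has infinite order of contact with the value $1$ at $s=\pm\e/3$); such a $\chi$, depending only on $\e$, clearly exists. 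For $D\in\cF^{\a,\e}$ the two separation hypotheses $\rho(\partial D,S(D))\ge\e$ and $\rho(\partial D,S^{\mathrm{out}}(D))\ge\e$ guarantee that the signed distance $\rho^{+}_{\partial D}$ (positive inside $D$) is a $C^{2+\a}$ function with $\|\nabla\rho^{+}_{\partial D}\|\equiv1$ on the $\e$-tubular neighbourhood of $\partial D$. I then set $f^{D}\df\chi\circ\rho^{+}_{\partial D}$ on that tube, $f^{D}\df\chi(\e/2)$ on $\{x\in\bar D:\rho(x,\partial D)\ge\e/2\}$, and $f^{D}\df\chi(-\e/2)$ on $\{x\notin D:\rho(x,\partial D)\ge\e/2\}$; the three prescriptions agree on their overlaps, so $f^{D}$ is a well-defined $C^{2+\a}$ function on $M$ which equals $\rho^{+}_{\partial D}$ near $\partial D$ (hence all identities of the Remark following Assumption~\ref{A6.1} hold), and $\|\nabla f^{D}\|_{\iy}=\|(\chi'\circ\rho^{+}_{\partial D})\,\nabla\rho^{+}_{\partial D}\|_{\iy}\le1$, which is \eqref{6.2}.

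From
\[
\mathrm{Hess}\,f^{D}=(\chi''\circ\rho^{+}_{\partial D})\,d\rho^{+}_{\partial D}\otimes d\rho^{+}_{\partial D}+(\chi'\circ\rho^{+}_{\partial D})\,\mathrm{Hess}\,\rho^{+}_{\partial D}
\]
one reads off the uniform boundedness of $\mathrm{Hess}\,f^{D}$ and the uniform bound on its $\a$-Hölder seminorm, once one knows that $\mathrm{Hess}\,\rho^{+}_{\partial D}$ is bounded in $C^{\a}$ on the $\e$-tube uniformly over $D\in\cF^{\a,\e}$. Likewise, when $D$ ranges over a chart $\cV^{2+\a}_{\d}(D_{0})$ its boundary is $\exp_{\partial D_{0}}(g)$ for $g\in C^{2+\a}(\partial D_{0})$ with $\|g\|_{C^{2+\a}(\partial D_{0})}<\d<\eta(\partial D_{0})$, and the map $g\mapsto\rho^{+}_{\partial D}$ is Fréchet $C^{2+\a}$ with values in the $C^{2+\a}$-functions on the $\e$-tube; composing with the fixed smooth $\chi$ preserves this, so $f$ is $C^{2+\a}$ jointly in $(x,D)$, its first and second Fréchet derivatives in $D$ along normal fields being those recorded in \eqref{6.2ter}. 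Both of these regularity facts are exactly what the analysis of signed distance functions and tubular neighbourhoods of Appendix~\ref{Section2} provides: the reach bounds built into $\cF^{\a,\e}$ control the second fundamental form of $\partial D$, the compactness of $\bar B(o,1/\e+\e)$ in $M$ controls the ambient curvature and its first derivative there, and the Riccati equation for the shape operators of the parallel hypersurfaces $\{\rho^{+}_{\partial D}=s\}$, $0\le s<\e$, propagates these bounds across the tube and along deformations of $\partial D$.

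For $\sigma_{\mathrm c}$, fix a $C^{\iy}$ isometric embedding $M\hookrightarrow\R^{N}$ (Nash) and let $P(x)\colon\R^{N}\to T_{x}M$ be the associated orthogonal projection, so $P(x)P(x)^{\ast}=\mathrm{Id}_{T_{x}M}$ with $x\mapsto P(x)$ smooth. Set $m\df N+1$ and, abbreviating $v\df\nabla f^{D}(x)$ (so $|v|\le1$ by \eqref{6.2}),
\[
\sigma_{\mathrm c}^{D}(x)(w,t)\df(\mathrm{Id}_{T_{x}M}-vv^{\ast})\,P(x)\,w+t\,\sqrt{1-|v|^{2}}\;v,\qquad(w,t)\in\R^{N}\times\R .
\]
Using $PP^{\ast}=\mathrm{Id}$ and $(\mathrm{Id}-vv^{\ast})^{2}=\mathrm{Id}-(2-|v|^{2})\,vv^{\ast}$, a one-line computation gives $\sigma_{\mathrm c}^{D}(x)(\sigma_{\mathrm c}^{D}(x))^{\ast}=\mathrm{Id}_{T_{x}M}-vv^{\ast}$; hence the map $\sigma^{D}(x)$ of \eqref{6.4} satisfies $\sigma^{D}(x)(\sigma^{D}(x))^{\ast}=vv^{\ast}+(\mathrm{Id}_{T_{x}M}-vv^{\ast})=\mathrm{Id}_{T_{x}M}$ for all $x\in M$, in particular on $\bar D$, which is \eqref{6.4.3}. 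Finally $\sigma_{\mathrm c}$ is $C^{1}$ (indeed $C^{1+\a}$) in $(x,D)$: $P$ is smooth, $v=\nabla f^{D}$ is $C^{1+\a}$ in $(x,D)$ by the previous steps, and $\sqrt{1-|v|^{2}}=\sqrt{1-(\chi'\circ\rho^{+}_{\partial D})^{2}}$ (equal to $1$ away from the $\e$-tube) is $C^{2+\a}$ in $(x,D)$ thanks to the choice of $\chi$. This realises Assumption~\ref{A6.1} for the given $\a$ and $\e$.

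The expected main obstacle is not the algebra above, which is elementary bookkeeping, but the uniform-in-$D$ $C^{2+\a}$ control of $\rho^{+}_{\partial D}$ on the $\e$-tube together with its Fréchet-$C^{2+\a}$ dependence on $D$: this is the genuine analytic input, and it is precisely where the reach conditions defining $\cF^{\a,\e}$ and the compactness of the ambient ball are used, through the Jacobi/Riccati description of the parallel hypersurfaces carried out in Appendix~\ref{Section2}.
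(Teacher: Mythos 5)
Your proof is correct and follows essentially the same strategy as the paper's: take $f^D$ to be a smooth cut-off of the signed distance to $\partial D$ (the paper uses a nondecreasing $h_\e$, you use a cut-off $\chi$ engineered so that $\sqrt{1-(\chi')^2}$ is smooth), and then complete $\n f^D$ to a frame with $\sigma\sigma^\ast=\mathrm{Id}$. The only difference is that you make the construction of $\sigma_{\mathrm c}$ fully explicit via a Nash embedding and the projection formula $\sigma_{\mathrm c}^D(x)(w,t)=(\mathrm{Id}-vv^\ast)P(x)w+t\sqrt{1-|v|^2}\,v$, whereas the paper defers to Proposition~3.2 of \cite{Arnaudon-Li:17} for this step; your verification $\sigma_{\mathrm c}\sigma_{\mathrm c}^\ast=\mathrm{Id}-vv^\ast$ is correct.
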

\begin{proof}
We begin with remarking that for $D\in \cF^{\alpha,\e}$, $\rho(\partial D, S(D))\ge \e$. In particular, the distance to $\partial D$ is $C^{2+\a}$ on $D_\e:=\{x\in M,\ \rho(x,\partial D)< \e\}$. Let $h_\e$ be an odd smooth nondecreasing function from $\R$ to $\R_+$ such that $h_\e(r)=r$ for $r\in [0,\e/2]$, $h_\e(r)=(3/4)\e$ for $r\ge \e$ and $\|h_\e'\|_\infty\le 1$. Then $f^D:=h_\e\circ {\rho^{+}_{\partial D}}$ satisfies all the requirements of the first condition of Assumption~\ref{A6.1}. Then for constructing $\s_c^D$ we proceed as in~\cite{Arnaudon-Li:17}, Proposition~3.2 taking $\s_1=\n f^D$. The wanted regularity in $D$ is easily checked. 
\end{proof}

Let $W_t$ and $W_t^m$ two independent Brownian motions with values respectively in $\RR$ and $\RR^m$. 

  The equation we are interested in writes in It\^o form for all $y\in \partial D_t$: 
\begin{equation}
 \label{6.5}
 \lt\{\begin{array}{rcl}
dX_t&=&\left(\n f^{D_t}(X_t) \, dW_t+ \sigma_{\mathrm c}^{D_t}(X_t)\, dW_t^m\right)\\[2mm]
 d\partial D_t(y)&=&N^{D_t}(y)\left(dW_t
+\left(\f12 h^{D_t}(y)+\Delta f^{D_t}(X_t)\right)\,dt\right)
\end{array}\rt.
\end{equation}
started at
a  compact domain $D_0$ with $C^{2+\a}$ boundary and $X_0$ such that  $\SL(X_0)=\SU(D_0)$, where $\SU(D_0)$ is the uniform probability measure on $D_0$.
The notation $d\partial D_t(y)$ stands for an infinitesimal move of the boundary $\pa D_t$ at point $y$ and is rigorously presented in Appendix \ref{Section3}, see \eqref{3.6}.
In fact, as in Definition~\ref{D6.1},  the evolution equation \eqref{6.5} is implicitly considered only up to the exit time $\tau_\e$ of $\cF^{\a,\e}$ for some fixed $\a\in(0,1),\  \e>0$, after which the process
is assumed not to move.
\par
In \eqref{6.5}, the processes $(D_t)_{t\geq 0}$
and $(X_t)_{t\geq 0}$ are fully interacting, since the evolution of one of them depends on the other one.
In particular, they are not Markovian by themselves in general.
\par
Another subset-valued  process $(\wi D_t)_{t\geq 0}$ will be interesting for our purposes. It is solution to the evolution equation
\bqn{6.5mod0}
\nonumber
\lefteqn{\hskip-20mm\fo t\leq \wi\tau_\epsilon,\,  \forall y\in \partial \wi D_t,}\\
d\partial \wi D_t(y)&=&N^{\wi D_t}(y)\left(d\wi W_t+\left(\f12 h^{\wi D_t}(y)-\f{\usm^{\partial\wi D_t}(\partial\wi D_t)}{\mu(\wi D_t)}\right)\,dt\right), \quad\eqn
where $\wi W_t$ is a real-valued Brownian motion and where $\wi\tau_\epsilon$ is the exit time from $\cF^{\alpha,\epsilon}$.
 \par
 Notice that the equation for $\wi D_t$ does no longer depend of $X_t$, so if the solution is unique, $(\wi D_t)_{t\geq 0}$ will be Markovian.
It is Equation (44) in \cite{zbMATH07470497} (up to a time scaling by 2).  Theorem~40 of~\cite{zbMATH07470497} (where (44) has been rewritten as (79)) proves local existence of a solution.
 \begin{thm}\label{6.5mod0hyp}
Fix $\a\in (0,1)$ and $\e>0$. Then  \eqref{6.5mod0} admits a unique 
 global solution. In particular the process $(\wi D_t)_{t\geq 0}$ is Markovian.
  \end{thm}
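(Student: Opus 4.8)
The plan is to recast \eqref{6.5mod0} as an evolution equation for the normal graph function over a fixed reference boundary, show that the resulting infinite-dimensional SDE is locally well-posed by the general parabolic SPDE machinery already invoked for Theorem~39 of~\cite{CM:19}, and then upgrade local existence to a global solution on the time-interval $[0,\tau_\e]$ by exploiting the a priori bounds built into the definition of $\cF^{\alpha,\e}$. First I would fix $\b=\a$ and, writing $\wi D_t=\inte(\exp_{\partial\wi D_0}(f_t))$ for $f_t\in C^{2+\a}(\partial\wi D_0)$ as long as the solution stays in a tubular neighbourhood $\mathcal V_\d^{2+\a}(\wi D_0)$, translate the geometric evolution of $\partial\wi D_t$ into a scalar SPDE for $f_t$. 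The drift has two pieces: the mean-curvature term $\tfrac12 h^{\wi D_t}$, which in the graph parametrization is a quasilinear second-order elliptic operator in $f_t$ with coefficients that are $C^\a$ in space and smooth in $f_t$ (this is exactly the operator analysed in~\cite{CM:19}), and the nonlocal term $-\usm^{\partial\wi D_t}(\partial\wi D_t)/\mu(\wi D_t)$, which is a smooth (indeed real-analytic) function of $f_t$ through the perimeter and volume functionals; the noise $N^{\wi D_t}(y)\,d\wi W_t$ is additive in the sense that a single real Brownian motion multiplies a vector field that depends smoothly on $f_t$. Standard fixed-point arguments in $C([0,T],C^{2+\a}(\partial\wi D_0))$ (or the maximal-regularity/Da Prato–Zabczyk framework, as used in the cited local-existence result) then give a unique maximal solution, and uniqueness holds in the stated class because the coefficients are locally Lipschitz in the $C^{2+\a}$-norm.

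The second step is the globalization argument, which is where the role of $\cF^{\alpha,\e}$ enters decisively. On $\cF^{\alpha,\e}$ the boundary $\partial\wi D_t$ stays at distance at least $\e$ from both the inner skeleton $S(\wi D_t)$ and the outer skeleton $S^{\rm out}(\wi D_t)$, the domain stays inside $B(o,1/\e)$, and the tubular radius $\eta(\partial\wi D_t)\ge\e$; consequently the signed distance function is $C^{2+\a}$ on an $\e$-collar, the mean curvature $h^{\wi D_t}$ and its $\a$-Hölder constants are uniformly bounded, and the perimeter and volume stay in a fixed compact subinterval of $(0,\infty)$, so the nonlocal drift is uniformly bounded as well. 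Recall that by Definition~\ref{D6.1} the generator $\wi\SL$ is declared to vanish outside $\cF^{\alpha,\e}$, so past the exit time $\tau_\e$ the process is frozen; thus "global" here means: the solution does not explode before $\tau_\e$, and then it is constant. The key point is therefore to rule out explosion of the $C^{2+\a}$-norm of $f_t$ strictly before $\tau_\e$. For this I would argue that as long as $\wi D_t\in\cF^{\alpha,\e}$ the SPDE has uniformly parabolic, uniformly $C^\a$-bounded coefficients and a uniformly bounded drift, so Schauder-type a priori estimates together with Proposition~\ref{pro1} of Appendix~\ref{appD} (whose applicability is precisely the reason the process was set up with infinite lifetime and skeleton-separated values) prevent the $C^{2+\a}$-norm from blowing up in finite time on $\{t<\tau_\e\}$; a localization/stopping argument then patches the local solutions across the (countably many) times at which $\wi D_t$ might leave one tubular chart $\mathcal V_\d^{2+\a}(\cdot)$ and re-enter another, yielding a solution defined for all $t\ge 0$.

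Finally, uniqueness of the global solution follows from pathwise uniqueness of the local solutions by the standard patching argument, and the Markov property of $(\wi D_t)_{t\ge 0}$ is then immediate: the equation \eqref{6.5mod0} no longer involves $X_t$, its coefficients depend only on the current domain $\wi D_t$ (through $N^{\wi D_t}$, $h^{\wi D_t}$, $\usm^{\partial\wi D_t}(\partial\wi D_t)$ and $\mu(\wi D_t)$), and the driving noise $\wi W$ has independent increments, so by the Yamada–Watanabe-type equivalence (uniqueness in law plus the strong solution property) the flow generated by \eqref{6.5mod0} is a Markov family. I expect the main obstacle to be the globalization step rather than local existence: one must show that the geometric constraints defining $\cF^{\alpha,\e}$ genuinely translate into uniform parabolicity and uniform Hölder control of the coefficients of the scalar SPDE, and in particular that the $\a$-Hölder seminorm of the mean curvature cannot degenerate as long as $\partial\wi D_t$ stays $\e$-separated from the skeletons — this is the technical heart, and it is exactly the kind of estimate that Proposition~\ref{pro1} and the bounded-Hessian/uniform-Hölder hypotheses recorded in Assumption~\ref{A6.1} are designed to supply.
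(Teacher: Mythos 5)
Your overall strategy (parametrize the moving boundary as a normal graph over a fixed reference hypersurface, get a scalar quasilinear parabolic equation for the graph function, prove local well-posedness, then globalize using the a priori bounds built into $\cF^{\alpha,\e}$) is a reasonable one, but it is genuinely different from the argument the paper actually uses, and it contains one concrete misattribution that needs fixing.

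The paper's proof (Appendix~\ref{appC}) is a Doss--Sussman / conjugation argument, not a direct SPDE fixed-point argument. It sets $D_t=\Psi(G_t,-W_t)$, where $\Psi(C,r)$ is the normal flow started at $C$, and shows that the Itô equation~\eqref{6.5mod0} for $D_t$ is equivalent to a trajectory-by-trajectory \emph{random} (deterministic, given the path of $W$) quasiparabolic PDE~\eqref{sflow2_aux} for $G_t$. The noise is thereby transformed away entirely, and one appeals to the deterministic quasiparabolic existence/uniqueness theory of \cite{CM:19} (Theorem~9 there, together with the $C^{1+\alpha/2,2+\alpha}$ regularity of Section~4.1). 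This buys several things for free: pathwise uniqueness is immediate because the random PDE is deterministic given $W$; the solution is automatically a functional of the driving Brownian path, hence a strong solution; and the Markov property then follows without invoking Yamada--Watanabe. Your approach, by contrast, tries to solve the SPDE directly via fixed point or maximal regularity. That is a legitimate alternative route, but it requires you to justify local well-posedness for a quasilinear SPDE with a multiplicative noise term $N^{\wi D_t}(y)\,d\wi W_t$ (it is not additive: the coefficient of $d\wi W_t$ depends on the solution through $N^{\wi D_t}$) and to then supply a separate argument for the Markov property; the paper's conjugation argument sidesteps both.

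There is also a genuine gap in your globalization step: you invoke Proposition~\ref{pro1} of Appendix~\ref{appD} to ``prevent the $C^{2+\alpha}$-norm from blowing up in finite time,'' but Proposition~\ref{pro1} is a statement about stability of $\cD(L)$ under $P_t$ and commutation $LP_t = P_tL$ for a Markov semigroup in the martingale-problem sense. It is an abstract result used later (in the proof of Proposition~\ref{P6.2}) to differentiate $s\mapsto \nu_s(\SP_{t-s}(gF_k))$; it has no bearing on Schauder-type a priori estimates or on non-explosion of Hölder norms of a solution. The actual reason the process is global is the convention already built into Definition~\ref{D6.1}: the generator is declared to vanish outside $\cF^{\alpha,\e}$, so the process is stopped at $\tau_\e$ and extended as a constant thereafter; and up to $\tau_\e$, local existence with $C^{1+\alpha/2,2+\alpha}$ regularity for the random quasiparabolic PDE suffices. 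If you want to keep your direct SPDE route, you need a genuine a priori estimate (Schauder or De Giorgi--Nash--Moser for the quasilinear operator with the $\cF^{\alpha,\e}$-induced coefficient bounds), not Proposition~\ref{pro1}.
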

\begin{proof}
The proof is a consequence of Theorem~22 in~\cite{zbMATH07470497}. It can be found in Appendix~\ref{appC}.
\end{proof}
 \par
To describe the generator $\wi\SL$ of $(\wi D_t)_{t\geq 0}$   we must introduce the following notations.
For any smooth function $k$ on $M$, consider the mapping $F_k$ on $\cD^{2+\alpha}$ by
\bq
\fo D\in \cD^{2+\alpha},\qquad F_k(D)&\df& \int_D k\, d\mu\eq
For any $k,g\in \cC^\iy(M)$ and any $D\in \cD^{2+\alpha}$, define
\bqn{6.7}
\wi \SL[F_k](D)&\df&\usm ^{\partial D}(k)\f{\usm^{\partial D}(\partial D)}{\mu(D)}-\f12 \usm^{\partial D}(\langle \n k, N^D\rangle )\\
\label{6.7b}\Gamma_{\wi\SL}[F_k, F_g](D)&\df& \int_{\pa D} k\, d\usm\int_{\pa D} g\, d\usm
\eqn\par
Next consider $\fA$ the algebra consisting of the 
functionals of the form
$\fF\df\ff(F_{k_1}, ..., F_{k_n})$, where $n\in\ZZ_+$, $k_1, ..., k_n\in \cC^\iy(M)$ and $\ff\st \cR\ri \RR$  is a $\cC^\iy$ mapping, with $\cR$ an open subset of $\RR^n$ containing the image of $\cD^{2+\alpha}$ by $(F_{k_1}, ..., F_{k_n})$.
 For  such a functional $\fF$, define
\bqn{fLfF}
\wi\SL[\fF]
&\df&
\sum_{l=1}^n \pa_j\ff(F_{k_1}, ..., F_{k_n}) \wi\SL[F_{k_l}]\\&&
\nonumber+\sum_{j,l\in\lin 1,n\rin}^n \pa_{j,l}\ff(F_{k_1}, ..., F_{k_n})\Gamma_{\wi\SL} [F_{k_j},F_{k_l}]\eqn
To two elements of $\fA$, $\fF\df\ff(F_{k_1}, ..., F_{k_n})$ and $\fG\df\fg(F_{g_1}, ..., F_{g_m})$, we also associate
\bqn{GfF}
\Gamma_{\wi\SL} [\fF,\fG]&\df&\!\!\!\!\sum_{l\in\lin n\rin, j\in\lin m\rin} \pa_l\ff(F_{k_1}, ..., F_{k_n})\pa_j\fg(F_{g_1}, ..., F_{g_m})\Gamma_{\wi\SL} [F_{k_l},F_{g_j}]
\eqn
\par
\begin{remark}
To see that the above definitions are non-ambiguous, since a priori they could depend on the writing of $\fF\in\fA$ under the form 
$\ff(F_{k_1}, ..., F_{k_n})$ and similarly for $\fG$, see Remark 2 of \cite{zbMATH07470497}. More generally, the forms of \eqref{fLfF} and \eqref{GfF}
are consequences of  the diffusion feature of $\wi\SL$, for more on the subject, see e.g.\ the book of Bakry, Gentil and Ledoux \cite{MR3155209}.
\end{remark}
\par
\begin{remark}\label{rm-stop}
In the above considerations, $\wi\SL$ was defined on $\cD^{2+\a}$, but from now on, $\wi\SL$ will stand for the restriction of this generator
to $\cF^{\a,\e}$ and will be zero on $\cD^{2+\a}\setminus\cF^{\a,\e}$, in accordance with Definition \ref{D6.1}. Similarly, all stochastic differential equations
will be valid only up to the stopping time $\tau_\e$ (which was defined after Definition \ref{D6.1}) or $\wi\tau_\e$ (defined after \eqref{6.5mod0}).
\end{remark}
\par
The interest of Assumption  \ref{A6.1} comes from the following result:
\begin{thm}
 \label{C6.1}
 Let $(x,D)\mapsto f^D(x)$ and  $(x,D)\mapsto \sigma_{\mathrm c}^D(x)$ satisfy Assumption~\ref{A6.1}. Then 
 equation~\eqref{6.5} has a solution $\di \left(X_{t}, D_t\right)_{t\geq 0}$ started at $D_0\in \SF^{\a, \e}$, $X_0\sim \SU(D_0)$. Moreover
 the processes  $\di \left(X_{t}\right)_{t\geq 0}$ and $\di \left(D_{t}\right)_{t\geq 0}$ are $\tau_\e$-intertwined.
\end{thm}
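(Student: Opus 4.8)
The plan is to establish the two claims separately: first existence of a solution to \eqref{6.5} on $[0,\tau_\e)$, then the intertwining property via an infinitesimal (generator-level) computation combined with Stokes' formula \eqref{6.1}.

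For existence, I would set up \eqref{6.5} as a coupled system: the $X$-component is a standard $M$-valued SDE driven by $(W,W^m)$ with the orthonormality relation \eqref{6.4.3} guaranteeing that $X$ is a Brownian motion on $M$ for \emph{any} adapted choice of the domain process, while the $\partial D$-component is a quasilinear SPDE on the hypersurface (a stochastically perturbed mean curvature type flow) whose drift depends on $X_t$ only through the scalar $\Delta f^{D_t}(X_t)$, which is uniformly bounded by the Assumption~\ref{A6.1} hypotheses (uniformly bounded Laplacian). Working in the local Banach chart $\mathcal{V}_\delta^{2+\alpha}(D_0)$ with the distance \eqref{dD}, I would invoke the local existence theory for such hypersurface flows (Theorem~39 of~\cite{CM:19}, whose proof underlies Theorem~\ref{6.5mod0hyp}) with the extra bounded adapted perturbation $\Delta f^{D_t}(X_t)\,dt$; since this term is a bounded semimartingale drift it does not affect the fixed-point/parabolic-regularity argument, giving a solution up to the exit time $\tau_\e$ of $\cF^{\alpha,\e}$. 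The uniform Hölder bounds on $\mathrm{Hess}\,f^D$ are exactly what is needed to keep the coefficients controlled along the way. After $\tau_\e$ the processes are frozen by convention, so the solution is global.

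For the intertwining, the core is to show that $t\mapsto \mathbb{E}[\phi(X_{t\wedge\tau_\e})\mid \SF_{t\wedge\tau_\e}^D]$ agrees with $\int \phi\, d\,\SU(D_{t\wedge\tau_\e})$ for test functions $\phi$, equivalently that the conditional law of $X_t$ given the whole path $(D_s)_{s\le t}$ is $\SU(D_t)$. I would do this at the level of generators: let $g\in C^\infty(M)$ and consider the process $\Phi_t \df \frac{1}{\mu(D_t)}\int_{D_t} g\, d\mu$, the average of $g$ over $D_t$. Using It\^o's formula for $g(X_t)$ and for $\Phi_t$ (the latter using the $\partial D$-equation, $\frac{d}{dt}\mu(D_t)$ and $\frac{d}{dt}\int_{D_t} g\,d\mu$ expressed via integrals over $\partial D_t$ of the normal velocity $N^{D_t}\,dW_t + (\tfrac12 h^{D_t}+\Delta f^{D_t}(X_t))\,dt$), I want to show that $g(X_t)-\Phi_t$ is a martingale when we already know it is conditionally centered, i.e. that the drift of $\mathbb{E}[g(X_t)\mid \SF_t^D]-\Phi_t$ vanishes. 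The key algebraic identity is \eqref{6.1}/\eqref{6.2bis}: with $f=f^{D}$ and the choice $\nabla f^D|_{\partial D}=N^D$, Stokes' formula converts $-\int_{\partial D} g\, d\usm$ into $\int_D g\,\Delta f^D\,d\mu + \int_D\langle\nabla g,\nabla f^D\rangle\,d\mu$. This is precisely what matches the It\^o drift $\tfrac12\Delta g$ of $g(X_t)$, averaged over $D_t$, against the boundary terms produced by the moving domain; the $\Delta f^{D_t}(X_t)$ coupling term in the $\partial D$-drift is the ingredient that synchronizes the martingale parts of $g(X_t)$ and $\Phi_t$ (both driven by $W_t$) so that the quadratic-variation/It\^o-correction terms cancel. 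Concretely I would verify that if $\SL[g](X_t)\,dt$ denotes the drift of $g(X_t)$ and we compute $d\Phi_t$, then $d(g(X_t)-\Phi_t)$ has zero drift \emph{conditionally on the event that $X_t\sim\SU(D_t)$ given the past}, so that the property propagates; a Gr\"onwall/uniqueness argument in the space of conditional laws then upgrades this infinitesimal statement to the full $\tau_\e$-intertwining, and finally the optional stopping / strong Markov structure passes from deterministic times $t$ to arbitrary bounded $\SF^D$-stopping times $\tau'\le\tau_\e$ as required by Definition~\ref{D6.0}.

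The main obstacle I anticipate is the rigorous It\^o calculus for the domain functionals $\Phi_t$ and $\mu(D_t)$: differentiating $\int_{D_t} g\,d\mu$ in time when $\partial D_t$ evolves by an It\^o SPDE requires a transport/Reynolds-type formula with a genuine It\^o correction term (the second-order contribution from the $dW_t$ part of the normal velocity, integrated against the curvature of the foliation), and one must check that exactly this correction is what \eqref{6.1} absorbs. Making this transport formula precise in the $C^{2+\alpha}$ hypersurface setting — and ensuring all the integrals over $\partial D_t$ are well-defined and the stochastic Fubini needed to interchange $\int_{\partial D_t}$ with $dW_t$ is justified — is where the technical weight lies; the well-separation of $\partial D$ from the skeleton ($\rho(\partial D,S(D))\ge\e$) is used here to keep the normal foliation and hence $h^{D}$ and the tubular coordinates smooth, and Proposition~\ref{pro1} of Appendix~\ref{appD} is presumably invoked to handle the infinite-lifetime bookkeeping.
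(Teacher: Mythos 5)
Your treatment of the intertwining follows essentially the same route as the paper's Proposition~\ref{P6.2}: compute the generator $\SL$ of the pair $(X,D)$, integrate against $\SU(D)$, and use Stokes' formula~\eqref{6.1} to match it with the action of $\wi\SL$ on $\bar\mu^D(g)F_k$, then propagate via a uniqueness argument for the forward equation (the paper implements this as the Duhamel-type identity $s\mapsto\nu_s(\SP_{t-s}(gF_k))$ being constant, justified by Proposition~\ref{pro1}). Two small points of looseness: the drift of $g(X_t)-\Phi_t$ does not vanish pathwise, only after averaging in law, so the phrase about the ``martingale parts synchronizing so the It\^o corrections cancel'' should be recast as the identity $\SU(D)\bigl[\SL(gF_k)(\cdot,D)\bigr]=\wi\SL\bigl(\bar\mu^{D}(g)F_k\bigr)(D)$, which is exactly what~\eqref{6.19}--\eqref{6.25} prove; and you correctly flag Proposition~\ref{pro1} and the transport formula (Proposition~\ref{P4.5}) as the technical hinges.

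Where your proposal has a genuine gap is the existence part of~\eqref{6.5}. You propose to solve the coupled system directly, arguing that the $X_t$-dependence of the $\partial D$-drift enters only through the uniformly bounded scalar $\Delta f^{D_t}(X_t)$ and therefore should not disturb the fixed-point/parabolic-regularity argument of Theorem~39 in~\cite{CM:19}. But that argument is built for an \emph{autonomous} hypersurface flow; here the drift depends on $X_t$, which in turn is driven by $D_t$ through $\nabla f^{D_t}(X_t)$ and $\sigma_c^{D_t}(X_t)$, so one faces a genuine two-way fixed point $(X,D)$ that the cited theorem does not automatically deliver. Asserting that ``a bounded semimartingale drift does not affect the argument'' sidesteps precisely the coupling that makes the problem nontrivial. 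The paper circumvents this entirely with a decoupling device you did not anticipate: it first modifies the drift so that the $\partial D$-equation becomes the autonomous equation~\eqref{6.5mod0} (solvable by Theorem~\ref{6.5mod0hyp}), next solves the $X$-equation~\eqref{6.10} given $D$, checks $X$ stays in $D$ via~\eqref{6.6}, and only then performs a Girsanov change of measure~\eqref{6.11} to recover a (weak) solution of~\eqref{6.5}. If you insist on solving~\eqref{6.5} directly, you would at minimum need to set up and close a Picard iteration or compactness argument for the joint system in the $C^{2+\alpha}$ chart, with estimates uniform in the iteration; as written, the claim does not follow from the cited theorem.
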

\begin{proof}

We prove here the existence of solution to equation~\eqref{6.5}. The intertwining will be a consequence of Proposition~\ref{P6.2} below.

We begin to prove the existence of a diffusion with modified drift, and then we will get the result by change of probability. The modified equation writes
\begin{equation}
 \label{6.5mod}
 \lt\{
\begin{array}{rcl}
 d\partial D_t(y)&=&N^{D_t}(y)\left(d\widehat W_t+\left(\f12 h^{D_t}(y)-\f{\usm^{\partial D_t}(\partial D_t)}{\mu(D_t)}\right)\,dt\right);\\
dX_t&=&\Biggl(\n f^{D_t}(X_t) \,\left[ d\widehat W_t-\left(\f{\usm^{\partial D_t}(\partial D_t)}{\mu(D_t)}+\Delta f^{D_t}(X_t)\right)\, dt\right]\\&&+ \sigma_{\mathrm c}^{D_t}(X_t)\, dW_t^m\Biggr)
\end{array}\rt.
\end{equation}
for $\widehat W_t$ and $W_t^m$ independent Brownian motions. 
Notice that the first equation is the same as \eqref{6.5mod0}. Thus due to  Theorem \ref{6.5mod0hyp}, $(D_t)_{t\geq 0}$ is a diffusion process with generator 
$\wi\SL$.
Then given $D_t$, the equation for $X_t$ 
\begin{equation}
 \label{6.10}
 \begin{split}
 dX_t&=\Biggl(\n f^{D_t}(X_t) \,\left[ d\widehat W_t-\left(\f{\usm^{\partial D_t}(\partial D_t)}{\mu(D_t)}+\Delta f^{D_t}(X_t)\right)\, dt\right]\\&+ \sigma_{\mathrm c}^{D_t}(X_t)\, dW_t^m\Biggr)
 \end{split}
\end{equation}
can also be solved, since the coefficients in front of $d\widehat W_t$ and $dW_t^m$ are Lipschitz, 
\linebreak
$\s^D(\s^D)^\ast(x)={\rm Id}_{T_xM}$ and $\D f^D$ is bounded and uniformly H\"older continuous (due to Assumption \ref{A6.1}).
 Notice that $X_t$ remains in $D_t$, since when $X_t\in \partial D_t$, we have, using~\eqref{6.4.2} which yields on boundary $\langle N^{D_t}(X_t), \sigma_{\mathrm c}^{D_t}(X_t) dW_t^m\rangle=0$, 
 \begin{equation}\label{6.6}
 \begin{split}
 &d(\rho^+_{\partial D_t}(X_t))\\&=\langle \n\rho^+_{\partial D_t},dX_t\rangle -\f{1}2h^{{D_t}}(X_t)\,dt-\langle d\partial D_t(X_t), N^{D_t}(X_t)\rangle\\
 &=\langle N^{D_t}(X_t),dX_t\rangle -\f{1}2h^{{D_t}}(X_t)\,dt-\langle d\partial D_t(X_t), N^{D_t}(X_t)\rangle=0.
 \end{split}
 \end{equation}
where we used~\eqref{6.5mod} and~\eqref{6.2bis}.  We also {have no covariation since} the martingale part of $d\partial D_t$ acts on the normal flow only, and any normal flow $$r\mapsto D(r):=\{x\in M,\ \rho^+(x)\ge r\}$$ satisfies $\rho^+_{\partial D(r)}(x)=\rho^+_{\partial D(0)}(x)-r$ for $x\in D(0)$ and $|r|$ small, (see Appendix \ref{Section2}).

Once we have a solution to~\eqref{6.5mod}, make by Girsanov theorem a change of probability such that $(W_t,W_t^m)$ is a Brownian motion where
\begin{equation}
 \label{6.11}
 W_t:=\widehat W_t-\int_0^t\left(\f{\usm^{\partial D_s}(\partial D_s)}{\mu(D_s)}+\Delta f^{D_s}(X_s)\right)\, ds.
\end{equation}
We get a solution to~\eqref{6.5} in the new probability.
\end{proof}

\begin{prop}
 \label{P4.5} 
 Let $D_t$ satisfy
 \begin{equation}
  \label{4.9.0}
  d\partial D_t(y)=N^{D_t}(y)\left(dW_t+\left(\f12 h^{D_t}(y)+b_t\right)\,dt\right),\quad \forall y\in \partial D_t
 \end{equation}
 for some Brownian motion $W_t$ and some adapted locally bounded real-valued process $b_t$.
  Let $\mu_t=\mu^{D_t}$ be the Lebesgue measure on $D_t$ and $\di \bar\mu_t=\bar \mu^{D_t}=\SU(D_{t})=\f{\mu^{D_t}}{\mu(D_t)}$. Denote by $\usm_t=\usm^{\partial D_t}$ the Lebesgue measure on $\partial D_t$ and $\di \bar{\usm}_t=\bar{\usm}^{\partial D_t}=\f{\usm^{\partial D_t}}{\mu(D_t)}$.
 Let $k$ be a smooth function of $M$. Then 
 \begin{equation}
  \label{4.9.1}
  d\mu_t(k)=-{\usm}_t(k)\,dW_t-\f12 \left(2b_t\usm_t(k)+{\usm}_t\left(\langle dk,N^{D_t}\rangle \right)\right)\,dt
 \end{equation}
 and
 \begin{equation}
  \label{4.9.2}
  \begin{split}
  d\bar\mu_t(k)&=\left(-\bar{\usm}_t(k)+\bar\mu_t(k)\bar{\usm}_t(\partial D_t)\right)\,dW_t-\f12 \bar{\usm}_t\left(\langle dk,N^{D_t}\rangle \right)\,dt\\
  &+\left(\bar{\usm}_t(\partial D_t)+b_t\right)\left(-\bar{\usm}_t(k)+\bar\mu_t(k)\bar{\usm}_t(\partial D_t)\right)\,dt
  \end{split}
 \end{equation}
 In particular, if $b_t=-\bar{\usm}_t(\partial D_t)$ we get
 \begin{equation}
  \label{4.9}
  d\bar\mu_t(k)=\left(-\bar{\usm}_t(k)+\bar\mu_t(k)\bar{\usm}_t(\partial D_t)\right)\,dW_t-\f12 \bar{\usm}_t\left(\langle dk,N^{D_t}\rangle \right)\,dt.
 \end{equation}
\end{prop}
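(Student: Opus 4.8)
The plan is to compute the Itô differential of $\mu_t(k) = \int_{D_t} k\,d\mu$ directly from the normal flow description \eqref{4.9.0}, then derive the other two formulas by Itô's formula / elementary manipulations.

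\textbf{Step 1: the evolution of $\mu_t(k)$.} The first formula \eqref{4.9.1} is the key computation. When the boundary $\partial D_t$ moves in the normal direction $N^{D_t}(y)$ with (formal) speed $dW_t + (\frac12 h^{D_t}(y) + b_t)\,dt$, the rate of change of $\int_{D_t}k\,d\mu$ is $-\int_{\partial D_t} k \cdot (\text{normal speed})\,d\usm$, the minus sign coming from the fact that $N^{D_t}$ is the \emph{inward} normal (so positive normal speed shrinks $D_t$). This gives the first-order terms $-\usm_t(k)\,dW_t - (\frac12 h^{D_t}\usm_t(k) + b_t\usm_t(k))\,dt$ up to the subtlety that $\usm_t$ itself is being transported. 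The remaining Itô correction comes from the quadratic variation of the boundary motion: over a time step, the region swept has second-order width, and one picks up a term from the divergence of the motion along the hypersurface together with $\frac12$ times the second variation. I would organize this by fixing a tubular neighborhood of $\partial D_0$, parametrizing $\partial D_t$ by a scalar function $r_t$ on $\partial D_0$ (or better, use the coarea formula writing $\mu_t(k) = \int k\,\mathds{1}_{\{\rho^+ \ge 0 \text{ rel } D_t\}}$ and differentiating), applying Itô to $r\mapsto r_t$, and using that the derivative of the area element $d\usm$ along the normal flow at speed $1$ is $-h^{D}\,d\usm$ (first variation of area). The $-h^{D}$ from the area element exactly cancels the $+\frac12 h^{D}$ drift against the Itô term $\frac12 \cdot (-h^D)\cdot(\text{speed})^2$-type contribution, leaving precisely \eqref{4.9.1}; this cancellation is exactly why the mean-curvature term is $\frac12 h^{D_t}$ in \eqref{4.9.0}. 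This bookkeeping — correctly accounting for the motion of the measure $\usm_t$ and the Itô term of the swept volume — is the main obstacle and the only place where genuine care is needed.

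\textbf{Step 2: from $\mu_t$ to $\bar\mu_t$.} Once \eqref{4.9.1} is established, I apply it with $k \equiv 1$ to get $d\mu(D_t) = -\usm_t(\partial D_t)\,dW_t - b_t\usm_t(\partial D_t)\,dt$ (the $\langle d1, N\rangle$ term vanishes), i.e. $dV_t = -\usm_t(\partial D_t)(dW_t + b_t\,dt)$ writing $V_t = \mu(D_t)$. Then $\bar\mu_t(k) = \mu_t(k)/V_t$, and Itô's formula for the ratio gives
\[
d\bar\mu_t(k) = \frac{d\mu_t(k)}{V_t} - \frac{\mu_t(k)}{V_t^2}\,dV_t - \frac{1}{V_t^2}\,d\langle \mu_\cdot(k), V\rangle_t + \frac{\mu_t(k)}{V_t^3}\,d\langle V\rangle_t.
\]
Substituting $d\mu_t(k)$, $dV_t$, the cross-variation $d\langle \mu_\cdot(k),V\rangle_t = \usm_t(k)\usm_t(\partial D_t)\,dt$ and $d\langle V\rangle_t = \usm_t(\partial D_t)^2\,dt$, and rewriting everything in terms of the normalized quantities $\bar\usm_t(k) = \usm_t(k)/V_t$, $\bar\mu_t(k) = \mu_t(k)/V_t$, $\bar\usm_t(\partial D_t) = \usm_t(\partial D_t)/V_t$, I collect the $dW_t$ coefficient as $-\bar\usm_t(k) + \bar\mu_t(k)\bar\usm_t(\partial D_t)$ and the $dt$ terms into the two pieces displayed in \eqref{4.9.2}; this is a routine though slightly lengthy algebraic regrouping.

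\textbf{Step 3: the special case.} Setting $b_t = -\bar\usm_t(\partial D_t)$ in \eqref{4.9.2} makes the factor $(\bar\usm_t(\partial D_t) + b_t)$ in the last line vanish identically, so the entire second drift line disappears and we are left with \eqref{4.9}. This is immediate once \eqref{4.9.2} is in hand. Throughout I would note that the local boundedness of $b_t$ and the smoothness of $k$, together with the regularity of $\partial D_t$ guaranteed by working inside $\cF^{\alpha,\e}$ (which keeps $\partial D_t$ away from the skeleton so the normal flow and the first variation of area formula are valid), ensure all the integrals and stochastic integrals involved are well defined.
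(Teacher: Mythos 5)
Your proposal is correct and follows essentially the same route as the paper: you express $\mu_t(k)$ via the normal-flow parametrization (a coarea-type formula with the $-h$ first variation of the area element, which is the paper's $F(r,k)$ in \eqref{4.12}--\eqref{4.15}), apply It\^o to this scalar function of the driving semimartingale to get \eqref{4.9.1} — with the same observation that the $\frac12 h^{D_t}$ drift cancels against the It\^o correction — and then handle $\bar\mu_t=\mu_t/\mu(D_t)$ by the It\^o quotient rule with the same cross- and quadratic-variation terms. This matches the paper's proof step by step.
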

\begin{proof}
Let us first work at fixed time $t\geq 0$. Denote $D=D_t$ and adopt the corresponding notations presented in Appendix \ref{Section2}.
 For $k$ a smooth function on $M$ and $r\in \RR$ sufficiently close to $0$ so that $\partial D(r)$ (defined in ~\eqref{2.3} and~\eqref{2.3.1}) is a smooth manifold without boundary, let 
\begin{equation}
 \label{4.11}
 F(r,k)=\int_{D(r)}k\,d\mu.
 \end{equation}
 We have 
 \begin{equation}\label{4.12}
 F(r,k)=\int_{\partial D}\left(\int_r^{\tau(y)}k\left(\psi(s)(y)\right)e^{-\int_0^sh^{{D}}(\psi(u)(y))\,du }{\, ds}\right)\,\usm(dy)
\end{equation}
with $\tau(y)$ the hitting time of $S(D)$ by the inward normal flow started at $y$ (defined in~\eqref{2.1}) and  $\psi(s)(y)\df\psi(0,s)(y)\df\exp_y(sN_y)$ defined in~\eqref{2.4}. 
The mapping $h^D$ is defined in \eqref{2.5} and is an extension of the mean curvature on the boundary $\pa D$: it corresponds to the mean curvature for the foliation induced by the $\pa D(r)$, $r\in\RR$ sufficiently small.
With this formulation we can differentiate with respect to $r$, to obtain
\begin{equation}
 \label{4.13}
 F'(r,k)=-\int_{\partial D} k(\psi(r,y))e^{-\int_0^rh^{{D}}(\psi(s)(y)\,ds}\,\usm(dy).
\end{equation}
Differentiating again we get 
\begin{equation}
 \label{4.14}
 F''(r,k)=-\int_{\partial D}\left( \langle dk,\partial_r\psi(r,y)\rangle - (kh)(\psi(r,y))\right)e^{-\int_0^rh^{{D}}(\psi(s)(y))\,ds}\,\usm(dy).
\end{equation}
In particular,
 \begin{equation}
  \label{4.15}
  F'(0,k)=-\usm(k)\quad\hbox{and}\quad F''(0,k)=\usm(kh-\langle dk,N\rangle).
 \end{equation}
 This allows us to compute 
 \begin{equation}
  \label{4.16}
  d(F(W_t,k))=F'(W_t,k)\,dW_t+\f12 F''(W_t,k)\, dt
 \end{equation}
 and then, since $dW_t$ and $\langle d\partial D_t, N^{D_t}\rangle(\cdot)$ differ only by a finite variation process
 \begin{equation}
  \label{4.17}
  d\mu_t(k)=\int_{\partial D_t}-{k}(y)\langle d\partial D_t(y),N^{D_t}(y)\rangle +\f12 \left(kh^{{D_t}}-\langle dk,N^{D_t}\rangle\right)(y)\,\usm_{{t}}(dy).
 \end{equation}
This yields
\begin{equation}
  \label{4.18}
  d\mu_t(k)=\int_{\partial D_t}{k}(y)\left(-dW_t-b_t\,dt\right)-\f12 \langle dk,N^{D_t}\rangle(y)\,\usm_{{t}}(dy)\,dt,
 \end{equation}
 which gives~\eqref{4.9.1}.
In particular, taking $k \equiv 1$ we obtain
\begin{equation}
\label{4.19}
d\mu(D_t)=\usm_t(\partial D_t)\left(-dW_t -b_t\,dt\right).
\end{equation}
Now we can compute 
\begin{align*}
&d\bar\mu_t(k)\\&=d\left(\f{\mu_t(k)}{\mu(D_t)}\right)\\
&{=\f1{\mu(D_t)}d\mu_t(k)-\f{\mu_t(k)}{\mu(D_t)^2}d\mu(D_t)+\f{\mu_t(k)}{\mu(D_t)^3}d\lan\mu(D_\cdot)\ran_t
-\f1{\mu(D_t)^2}d\lan \mu_\cdot(k),\mu(D_\cdot)\ran_t}\\
&=\f1{\mu(D_t)}d\mu_t(k)-\f{\mu_t(k)}{\mu(D_t)^2}d\mu(D_t)+\f{\mu_t(k)}{\mu(D_t)^3}\usm(\partial D_t)^2dt-\f{1}{\mu(D_t)^2}\usm_t(k)\usm_t(\partial D_t)dt\\
&=-\bar{\usm}_t(k)\left(dW_t+b_t\,dt\right)-\f12\bar{\usm}_t(\langle dk,N^{D_t}\rangle )\,dt+\bar\mu_t(k)\bar{\usm}_t(\partial D_t)(dW_t+b_t\,dt)\\&+\bar\mu_t(k)\bar{\usm}_t(\partial D_t)^2\,dt
-\bar{\usm}_t(k)\bar{\usm}_t(\partial D_t)\,dt.
\end{align*}
This yields~\eqref{4.9.2}.
\end{proof}
\par
Denote $\tau_\e$ the exiting time of $(D_t)_{t\geq 0}$ from $\cF^{\a,\e}$.
As in Definition \ref{D6.1}, we stop $(X_t,D_t)_{t\geq 0}$ at $\tau_\e$.
\begin{prop}
\label{P6.1}
Any solution of equation~\eqref{6.5} stopped at $\tau_\e$ is a Markov process   solution to a martingale problem associated to a generator $\SL$ acting in the following way: for any $g,k$ smooth functions on $M$ and 
\begin{equation}
\label{6.13}
F_k (D):=\int_Dkd\mu,
\end{equation}
we have for $(x,D)\in M\times\cF^{\a,\e}$,
\begin{equation}
\label{6.14}
\begin{split}
\SL(gF_k)(x,D)&=-g(x)\Delta f^D(x)\usm^{\partial D}(k)-\f12 g(x)\usm^{\partial D}(\langle \n k, N^D\rangle)+\f12 F_k(D)\Delta g(x)\\
&-\usm ^{\partial D}(k)\langle\n  g, \n f^D\rangle (x).
\end{split}
\end{equation}
\end{prop}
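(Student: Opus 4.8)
The plan is to apply It\^o's formula to the scalar process $t\mapsto g(X_t)\,F_k(D_t)$, using the semimartingale decompositions already available for each of the two factors, and then to read off the finite-variation part, which should be $\int_0^{\cdot}\SL(gF_k)(X_s,D_s)\,ds$. Throughout, the process is understood to be stopped at $\tau_\e$, so that all quantities below live in $\cF^{\a,\e}$ where, by Assumption \ref{A6.1} and the definition of $\cF^{\a,\e}$, the data $\nabla f^D$, $\Delta f^D$, the mean curvatures of the normal foliation, $\usm^{\partial D}(\partial D)$ and $\mu(D)^{-1}$ are all uniformly bounded, and $g,k$ (being smooth) are bounded together with their derivatives on the fixed compact ball $B(o,1/\e)$.

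First I would treat the factor $g(X_t)$. Since $\s^D(\s^D)^\ast(x)=\mathrm{Id}_{T_xM}$ by \eqref{6.4.3}, the first line of \eqref{6.5} exhibits $X$ as a Brownian motion on $M$ whose martingale part is carried by $(W,W^m)$; hence It\^o's formula on $M$ gives
\[
dg(X_t)=\langle\nabla g,\nabla f^{D_t}\rangle(X_t)\,dW_t+\langle\nabla g(X_t),\s_{\mathrm c}^{D_t}(X_t)\,dW_t^m\rangle+\tfrac12\Delta g(X_t)\,dt .
\]
Next I would treat $F_k(D_t)=\mu_t(k)$. The second line of \eqref{6.5} is precisely equation \eqref{4.9.0} with adapted, locally bounded drift $b_t=\Delta f^{D_t}(X_t)$, so Proposition \ref{P4.5}, formula \eqref{4.9.1}, applies and yields
\[
dF_k(D_t)=-\usm_t(k)\,dW_t-\tfrac12\bigl(2\Delta f^{D_t}(X_t)\,\usm_t(k)+\usm_t(\langle\nabla k,N^{D_t}\rangle)\bigr)\,dt .
\]

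Then I would combine the two via It\^o's product rule, $d(g(X_t)F_k(D_t))=g(X_t)\,dF_k(D_t)+F_k(D_t)\,dg(X_t)+d\langle g(X_\cdot),F_k(D_\cdot)\rangle_t$. The only common driving martingale of the two factors is $W$ (the martingale part of $d\partial D_t$ acts on the normal flow only, exactly as used around \eqref{6.6}, so there is no contribution from $W^m$); its coefficients are $\langle\nabla g,\nabla f^{D_t}\rangle(X_t)$ and $-\usm_t(k)$, so the cross-bracket contributes $-\langle\nabla g,\nabla f^{D_t}\rangle(X_t)\,\usm_t(k)\,dt$. Adding the four finite-variation terms — the two coming from $g(X_t)\,dF_k(D_t)$, the one from $F_k(D_t)\cdot\tfrac12\Delta g(X_t)\,dt$, and the bracket term — reproduces exactly the right-hand side of \eqref{6.14}, namely
\[
-g(x)\Delta f^{D}(x)\,\usm^{\partial D}(k)-\tfrac12 g(x)\,\usm^{\partial D}(\langle\nabla k,N^D\rangle)+\tfrac12 F_k(D)\Delta g(x)-\usm^{\partial D}(k)\,\langle\nabla g,\nabla f^D\rangle(x),
\]
while the leftover stochastic integrals against $W$ and $W^m$ have, on $[0,\tau_\e]$, bounded integrands and hence are genuine martingales. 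This shows that $g(X_{t})F_k(D_{t})-g(X_0)F_k(D_0)-\int_0^{t}\SL(gF_k)(X_s,D_s)\,ds$ is a martingale. Finally, the Markov property of $(X_t,D_t)$ follows from the well-posedness of \eqref{6.5} established in Theorem \ref{C6.1} (through \eqref{6.5mod} and the Girsanov change of measure) together with the time-homogeneity of the coefficients, by the standard fact that uniqueness in law for a time-homogeneous equation entails the Markov property of its solutions; after $\tau_\e$ both the process and $\SL$ are frozen, which is consistent.

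The main obstacle is the It\^o/covariation bookkeeping on the manifold: one must justify carefully that the Hessian corrections in $dg(X_t)$ collapse to $\tfrac12\Delta g$ thanks to \eqref{6.4.3}, and that the bracket of $g(X_\cdot)$ with the domain functional $F_k(D_\cdot)$ reduces to the single cross term through $W$ because the martingale part of the boundary evolution is purely normal. Once these two points are in place, everything else is a bounded-coefficient, time-homogeneous verification using Proposition \ref{P4.5}.
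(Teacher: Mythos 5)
Your argument is correct and follows essentially the same route as the paper's own proof: both invoke Proposition~\ref{P4.5}, formula~\eqref{4.9.1}, with $b_t=\Delta f^{D_t}(X_t)$ to obtain $dF_k(D_t)$, use $\sigma^D(\sigma^D)^\ast=\mathrm{Id}$ to write $dg(X_t)$ via It\^o on $M$, compute the covariation through the single shared noise $W$, and assemble the terms. The paper merely packages the final step with the carr\'e-du-champ identity~\eqref{6.18}, $\SL(gF_k)=g\,\SL(F_k)+\tfrac12 F_k\Delta g+\Gamma_\SL[g,F_k]$, instead of spelling out the It\^o product rule line by line, but the computation is identical.
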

\begin{proof}
From~\eqref{6.5} and~\eqref{4.9.1} with $b_t=\D f^{D_t}(X_t)$ we have 
\begin{equation}
\label{6.15}
\begin{split}
dF_k(D_t)=&-\usm^{\partial D_t}(k)\left(\,dW_t +\D f^{D_t}(X_t)\,dt\right){ -}\f12\usm^{\partial D_t}\left(\langle \n k,N^{D_t}\rangle \right)\,dt.
\end{split}
\end{equation}
This implies that 
\begin{equation}
\label{6.16}
\SL(F_k)(x,D)=-\usm^{\partial D}(k)\D f^D(x)-\f12\usm^{\partial D}\left(\langle \n k,N^{D}\rangle \right),
\end{equation}
and the covariation of $g(X_t)$ and $F_k(D_t)$ is $\Gamma_\SL[g,F_k](X_t,D_t)\,dt$ with 
\begin{equation}
\label{6.17}
\Gamma_\SL[g,F_k](x,D)=-\usm^{\partial D}(k)\langle \n g,\n f^D\rangle (x).
\end{equation}
Consequently, using
\begin{equation}
\label{6.18}
\SL(gF_k)(x,D)= g(x)\SL(F_k)(x,D)+F_k(D)\f12\D g(x)+\Gamma_\SL[g,F_k](x,D)
\end{equation}
we get~\eqref{6.14}.
\end{proof}
\par
It is possible to extend the description of $\SL$ to more general functions on $M\times\cF^{\a,\e}$ (it vanishes on its complementary set), by replacing $F_k$ in \eqref{6.14} by a mapping $\fF$ from $\fA$, as presented before Theorem \ref{C6.1}.\par
Let $(\SP_t)_{t\geq 0}$ be the Markovian semi-group  associated to the processes $(X_t,D_t)_{t\geq 0}$ solution to \eqref{6.5} stopped at $\tau_\e$.
This semi-group is associated to $\SL$ in the weak sense of martingale problems, as described in Appendix \ref{appD}.
\par
Let $(\wi D_t)_{t\geq 0}$ be a diffusion process with generator $ \wi\SL$ stopped outside $\cF^{\a,\e}$,
started at $\wi D_0=D_0$ (due to  Theorem \ref{6.5mod0hyp}, this process can be obtained as a solution to the evolution equation \eqref{6.5mod0}),
$\wi \nu_t$ its law at time $t$ and let
\begin{equation}
\label{6.8}
\nu_t(dD, dx):=\wi \nu_t(dD)\SU(D)(dx).
\end{equation}

\begin{prop}
\label{P6.2}
We have for all smooth functions $g,k$ on $M$:
\begin{equation}
\label{6.15.1}
\partial_t\nu_t(gF_k)=\nu_t(\SL (gF_k)).
\end{equation}
As a consequence, if $(D_0,X_0)$ has law $\nu_0$ then for all $t\ge 0$, the solution $(D_t,X_t)$ to equation~\eqref{6.5} has law $\nu_t$, implying that $(X_t)_{t\ge 0}$ and $(D_t)_{t\ge 0}$ are $\tau_\e$-intertwined. Moreover $D_t$ is a diffusion with generator $\wi \SL$.
\end{prop}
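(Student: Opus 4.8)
\emph{Proof plan.}

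The plan is to recognize $\nu_t(gF_k)$ as the expectation of a functional of $D$ alone, carry it forward by It\^o's formula along the autonomous $\wi\SL$-diffusion, and then match the two generators through Stokes' formula. Because $F_k(D)$ does not depend on $x$, one has $\nu_t(gF_k)=\int\wi\nu_t(dD)\,F_k(D)\,\SU(D)(g)=\wi\nu_t(\fF)$, where $\fF\df F_kF_g/F_1$ and $F_1(D)=\mu(D)$. On $\cF^{\a,\e}$ the volume $\mu(D)$ is bounded above and bounded below away from $0$ and the area $\usm^{\partial D}(\partial D)$ is bounded — these are precisely the estimates furnished by the constraints $D\subset B(o,1/\e)$ and $\rho(\partial D,S(D))\ge\e$ — so $\fF\in\fA$ and $\wi\SL[\fF]$ is bounded on $\cF^{\a,\e}$. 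By Theorem~\ref{6.5mod0hyp}, $(\wi D_t)$ is the diffusion with generator $\wi\SL$, so $\fF(\wi D_t)-\fF(\wi D_0)-\int_0^t\wi\SL[\fF](\wi D_s)\,ds$ (frozen after $\tau_\e$) is a martingale; taking expectations gives $\partial_t\wi\nu_t(\fF)=\wi\nu_t(\wi\SL[\fF])$.

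Everything then comes down to the pointwise identity $\SU(D)\big(\SL(gF_k)(\cdot,D)\big)=\wi\SL[\fF](D)$ on $\cF^{\a,\e}$. I would compute the right-hand side by the chain rule~\eqref{fLfF} for $(a,b,c)\mapsto ab/c$, feeding in $\wi\SL[F_k],\wi\SL[F_g],\wi\SL[F_1]$ from~\eqref{6.7} and $\Gamma_{\wi\SL}[F_\cdot,F_\cdot]$ from~\eqref{6.7b}; the contributions carrying a factor $\usm^{\partial D}(\partial D)$ cancel in pairs. The left-hand side I would get by integrating~\eqref{6.14} against $\SU(D)=\mu(D)^{-1}\mu$: the first and last terms of~\eqref{6.14} combine into $-\usm^{\partial D}(k)\,\mu(D)^{-1}\big(\int_D g\,\Delta f^D\,d\mu+\int_D\langle\n g,\n f^D\rangle\,d\mu\big)$, which by Stokes' formula~\eqref{6.1} equals $\usm^{\partial D}(k)\,\usm^{\partial D}(g)/\mu(D)$ — and this step is legitimate exactly because $\n f^D|_{\partial D}=N^D$ by~\eqref{6.2bis}; the term $\f12 F_k(D)\,\mu(D)^{-1}\int_D\Delta g\,d\mu$ becomes $-\f12 F_k(D)\,\mu(D)^{-1}\usm^{\partial D}(\langle\n g,N^D\rangle)$ by the divergence theorem (the outward normal being $-N^D$); and the remaining term is already $-\f12 F_g(D)\,\mu(D)^{-1}\usm^{\partial D}(\langle\n k,N^D\rangle)$. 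Both sides reduce to the same expression, whence, combining with the first paragraph, $\partial_t\nu_t(gF_k)=\wi\nu_t(\wi\SL[\fF])=\int\wi\nu_t(dD)\,\SU(D)(\SL(gF_k)(\cdot,D))=\nu_t(\SL(gF_k))$, that is~\eqref{6.15.1}.

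For the consequences, the very same computation applies with $gF_k$ replaced by any $g\,\fF$ with $g\in\cC^\iy(M)$, $\fF\in\fA$, and more generally by any $\Psi(g_1(x),\dots,F_{k_1}(D),\dots)$ — here one uses that $\Gamma_\SL[F_\cdot,F_\cdot]$ does not depend on $x$ and that $\SU(D)(\SL[F_k](\cdot,D))=\wi\SL[F_k](D)$, again by Stokes and the divergence theorem — so $(\nu_t)$ solves $\partial_t\nu_t(\Phi)=\nu_t(\SL\Phi)$ on a determining class with initial value $\nu_0$. By Proposition~\ref{P6.1} and its extension to $\fA$ noted just after it, the solution of~\eqref{6.5} stopped at $\tau_\e$ solves the martingale problem for $\SL$; by the well-posedness recalled in Appendix~\ref{appD} (Proposition~\ref{pro1}), the solution issued from $\nu_0$ has law $\nu_t$ for all $t$. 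Since $\nu_t(dD,dx)=\wi\nu_t(dD)\,\SU(D)(dx)$, the $\SF^D$-conditional law of $X_t$ is $\SU(D_t)$ at each fixed time, and the extension to bounded $\SF^D$-stopping times $\tau'\le\tau_\e$ — hence the $\tau_\e$-intertwining of Definition~\ref{D6.0} — follows by the standard approximation of $\tau'$ by discrete stopping times, using the Markov property of $(X,D)$ and the Hausdorff-continuity of $(D_t)$. Finally, applying the martingale problem to $\Phi(x,D)=\fF(D)$, conditioning on $\SF_t^D$, and using both the intertwining $X_t\mid\SF_t^D\sim\SU(D_t)$ and the identity $\SU(D)(\SL[\fF](\cdot,D))=\wi\SL[\fF](D)$, one finds that $\fF(D_t)-\int_0^t\wi\SL[\fF](D_s)\,ds$ is an $\SF_t^D$-martingale; thus $(D_t)$ solves the martingale problem for $\wi\SL$ and is, by uniqueness (Theorem~\ref{6.5mod0hyp}), the diffusion with generator $\wi\SL$.

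The main obstacle is the bookkeeping behind the pointwise identity $\SU(D)(\SL(gF_k)(\cdot,D))=\wi\SL[\fF](D)$: organizing the $\wi\SL[\fF]$ expansion so that all $\usm^{\partial D}(\partial D)$-terms cancel, and — the crucial point — applying Stokes' formula with the \emph{deformed} function $f^D$ in place of the genuine signed distance, which is exactly what Assumption~\ref{A6.1} is designed to allow (through $\n f^D|_{\partial D}=N^D$ and the boundedness of $\Delta f^D$). By comparison, the last step, upgrading equality of one-dimensional marginals to ``$(D_t)$ is the $\wi\SL$-diffusion'', is a softer matter, handled by the martingale-problem conditioning rather than by the cancellation itself.
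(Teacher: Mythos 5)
Your core computation is the same as the paper's, just in different clothes: the functional $\fF=F_kF_g/F_1$ that you form is exactly the $\bar\mu^{D}(g)F_k$ that the paper manipulates, and both proofs hinge on the same pointwise identity $\SU(D)\big(\SL(gF_k)(\cdot,D)\big)=\wi\SL[\fF](D)$, obtained by integrating \eqref{6.14} against $\SU(D)$ and applying Stokes' formula \eqref{6.1} with $\n f^D|_{\partial D}=N^D$ to collapse the two $f^D$-dependent terms. Whether one then computes $\wi\SL[\fF]$ via the chain rule \eqref{fLfF} or via the It\^o--type identity \eqref{4.9} is a matter of presentation. So the heart of \eqref{6.15.1} you have right.

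Where the proposal departs from the paper, and develops gaps, is in the upgrades. First, to go from the forward equation $\partial_t\nu_t(gF_k)=\nu_t(\SL(gF_k))$ to ``the law at time $t$ of the solution of \eqref{6.5} is $\nu_t$'', you invoke ``well-posedness recalled in Appendix~\ref{appD} (Proposition~\ref{pro1})''. But Proposition~\ref{pro1} is not a well-posedness statement for the martingale problem; it is the commutation $\SL\SP_t=\SP_t\SL$ on the domain. The paper uses this commutation \emph{inside} the interpolation $G(g,k,t)(s)=\nu_s(\SP_{t-s}(gF_k))$: differentiating, the two terms cancel by \eqref{6.15.1} and Proposition~\ref{pro1}, hence $G$ is constant and $\nu_0\SP_t(gF_k)=\nu_t(gF_k)$. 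That mechanism avoids having to assert uniqueness of the forward Kolmogorov equation, which is not established elsewhere in the paper. Second, your route to ``$(D_t)$ is a $\wi\SL$-diffusion'' projects the $(X,D)$-martingale $\fF(D_t)-\int_0^t\SL[\fF](X_s,D_s)\,ds$ onto $\SF^D_t$. The projection is indeed an $\SF^D$-martingale, but to identify its finite-variation part with $\int_0^t\wi\SL[\fF](D_s)\,ds$ you need $\E[\SL[\fF](X_s,D_s)\mid\SF^D_t]=\wi\SL[\fF](D_s)$ for $s\le t$, i.e.\ the \emph{smoothing} identity $\cL(X_s\mid\SF^D_t)=\SU(D_s)$, not just the filtering identity $\cL(X_s\mid\SF^D_s)=\SU(D_s)$ provided by the intertwining. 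The smoothing identity is not free — it is in fact most naturally deduced \emph{after} one knows that $(D_t)$ is Markov. The paper sidesteps this by iterating: using $\nu_0\SP_t=\nu_t$ together with the Markov property of $(X,D)$ one shows the finite-dimensional marginals of $(D_t)$ coincide with those of $(\wi D_t)$, which directly identifies $(D_t)$ as the $\wi\SL$-diffusion without ever needing the smoothing law of $X$.
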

\begin{proof}
Integrating~\eqref{6.14} in $x$ with respect to the uniform law $\bar\mu^D:=\SU(D)$ in $D$ yields 
\begin{equation}
\label{6.19}
-\bar\mu^D\left(g\D f^D\right)\usm^{\partial D}(k)-\f12 \bar\mu^D(g)\usm^{\partial D}(\langle \n k,N^D\rangle)+\f12 F_k(D)\bar\mu^D(\D g)-\usm^{\partial D}(k)\bar\mu^D(\langle \n g,\n f^D\rangle ).
\end{equation}
By Stokes theorem, 
\begin{equation}
\label{6.20}
\bar\mu^D\left(g\D f^D+\langle \n g,\n f^D\rangle\right)=\bar\usm ^{\partial D}\left(g\langle \n f^D, -N^D\rangle \right)=-\underline{\bar \mu}^{\partial D}(g),
\end{equation}
so the expression~\eqref{6.19} writes
\begin{equation}
\label{6.21}
H(D):=\usm^{\partial D}(k)\underline{\bar \mu}^{\partial D}(g)-\f12\bar\mu^D(g)\usm^{\partial D}(\langle \n k,N^D\rangle )+\f12F_k(D)\bar \mu^D(\D g)
\end{equation}
On the other hand 
\begin{equation}
\label{6.22}
\nu_t(gF_k)=\wi\nu_t[\bar \mu^{D_t}[g] F_k]
\end{equation}
which implies that 
\begin{equation}
\label{6.23}
\partial_t\nu_t(gF_k)=\partial_t\wi\nu_t(\left(\bar \mu^{D_t}(g) F_k\right)=\wi\nu_t\left(\wi\SL\left(\bar \mu^{D_t}(g) F_k\right)\right).
\end{equation}
By~\eqref{4.9}, 
\begin{equation}
\label{6.24}
\wi\SL\left(\bar \mu^{D_t}(g)\right)=-\f12\bar\usm ^{\partial D_t}(\langle \n g,N^{D_t}\rangle),
\end{equation}
so, taking into account \eqref{6.7b},
\begin{align*}
&\wi\SL\left(\bar \mu^{D_t}(g)F_k\right)\\&=\bar \mu^{D_t}(g)\wi \SL(F_k)+F_k\wi\SL\left(\bar \mu^{D_t}(g)\right)+\Gamma_{\wi\SL}\left[\bar \mu^{D_t}(g), F_k\right]
\\
&=\bar\mu^{D_t}(g)\Big\{\usm^{\partial D_t}(k)\bar\usm ^{\partial D_t}(\partial D_t)-\f12 \usm^{\partial D_t}(\langle \n k, N^{D_t}\rangle)\Big\}-\f12 \mu^{D_t}(k)\bar\usm ^{\partial D_t}(\langle \n g,N^{D_t}\rangle )\\
&-\left(-\bar\usm ^{\partial D_t}(g)+\bar\mu^{D_t}(g)\bar\usm ^{\partial D_t}(\partial D_t)\right)\usm^{\pa D_t}(k)\\
&=-\f12\bar\mu^{D_t}(g)\usm^{\partial D_t}(\langle \n k,N^{D_t}\rangle)-\f12 \mu^{D_t}(k)\bar\usm^{\pa D_t}(\langle \n g, N^{D_t}\rangle )+\bar\usm ^{\partial D_t}(g)\usm^{\partial D_t}(k).
\end{align*}
But  $\di \bar\mu^{D_t}(\D g)=-\bar\usm ^{\partial D_t}(\langle\n g,N^{D_t}\rangle)$ and $F_k(D_t)=\mu^{D_t}(k)$, so
\begin{equation}
\label{6.25}
H(D_t)=\wi\SL\left(\bar \mu^{D_t}(g)F_k\right),
\end{equation}
which together with~\eqref{6.23} proves~\eqref{6.15.1}. 
\par
Let us now prove that for any $t\geq 0$,  $\SP_t$ transports $\nu_0$ into $\nu_t$, where $(\SP_t)_{t\geq 0}$ is the semi-group introduced after the proof of Proposition \ref{P6.1}. Consider the map 
\begin{equation}
\label{6.26}
G(g,k,t)(s)=\nu_s\left(\SP_{t-s}(gF_k)\right), \quad s\in[0,t].
\end{equation}
We compute
\begin{equation}
\label{6.27}
\begin{split}
G(g,k,t)'(s)&=(\partial_s\nu_s)\left(\SP_{t-s}(gF_k)\right)-\nu_s\left(\partial_t\SP_{t-s}(gF_k)\right)\\
&=\nu_s\left(\SL \SP_{t-s}(gF_k)\right)-\nu_s\left(\SL \SP_{t-s}(gF_k)\right)=0
\end{split}
\end{equation}
where we used Proposition \ref{pro1} in Appendix \ref{appD}  to justify the differentiations (as well as the fact that $\SL \SP_{t-s}(gF_k)=\SP_{t-s}\SL (gF_k)$
is bounded to be able to use differentiation under the integral $\nu_s$).
So we get $ G(g,k,t)(0)=G(g,k,t)(t)$ which rewrites as
\begin{equation}
\label{6.28}
\nu_0\SP_t(gF_k)=\nu_t(gF_k),
\end{equation}\par
More generally, by similar arguments, we can replace in this formula $F_k$ by any mapping $\fF$ from $\fA$.
This in turn implies that $\nu_0\SP_t=\nu_t$.
\par
To finish, by iteration, we see that if $X_0\sim \bar\mu^{D_0}$ then $(D_t)_{t\ge0}$ has the same finite time marginals as $(\wi D_t)_{t\ge0}$, proving that $(D_t)$ is a diffusion with generator $\wi \SL$.
\end{proof}

\section{Intertwined dual processes: a  generalized Pitman theorem }
\label{Section4}
\setcounter{equation}0

In this section we will consider the case where $f^D$ is the distance to boundary. It is not covered by Section~\ref{Section6} since distance to boundary is not smooth, it is singular on the skeleton of $D$. We will make an  approximation of it, and then go to the limit in law.

Let $\wi W_t$ be a real-valued Brownian motion and $\wi D_t$ be the solution of~\eqref{6.5mod0} started at $\wi D_0$, with driving Brownian motion $\wi W_t$.
\begin{assumption}
\label{A4.0}
Fix $\a\in (0,1)$ and $\e>0$.  There exists a closed bounded subset $\wi \cF^{\a,\e}$ of $\cF^{\a,\e}$ in which the process $(\wi D_t)_{t\ge 0}$ a.s.\ takes its values, such that  the map $D\mapsto S(D)$ is {  continuous} from $\wi \cF^{\a,\e}$ with the $C^{2}$ metric to $\cK(M)$, the set of compact subsets of $M$ endowed with the Hausdorff metric. Moreover {  Brownian motions with probability one never hit the singular part of $S(\tilde D_t)$}.
\end{assumption}
\begin{conj}
\label{C4.0}
We conjecture that Assumption~\ref{A4.0} is always realized, for any $\a\in (0,1)$, $\e>0$, $\wi D_0 \in \cF^{\a,\e}$.
\end{conj}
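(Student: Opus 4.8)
Since this is stated only as a conjecture, I sketch a plausible line of attack rather than a complete argument. The plan is to build $\wi\cF^{\a,\e}$ from the stopping mechanism already present in \eqref{6.5mod0} together with parabolic smoothing of the flow, and then to reduce the three required uniform statements to regularity and stability properties of the skeleton ($=$ medial axis).

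\textbf{Choice of $\wi\cF^{\a,\e}$.} By Remark~\ref{rm-stop}, $(\wi D_t)_{t\ge 0}$ is frozen at the exit time $\tau_\e$ from $\cF^{\a,\e}$, so its whole trajectory stays in $\cF^{\a,\e}$; in particular $\wi D_t\subset B(o,1/\e)$ and $\rho(\pa\wi D_t,S(\wi D_t))\wedge\rho(\pa\wi D_t,S^{\rm out}(\wi D_t))\ge\e$ for all $t$. The latter is a two-sided reach bound, and already forces a uniform bound on the second fundamental form of $\pa\wi D_t$, hence $C^{1,1}$ control of the boundary with constants depending only on $\e$ and on the geometry of $M$ on $B(o,1/\e)$. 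I would then take $\wi\cF^{\a,\e}$ to be the closure, in the $C^2$ metric, of $\{\wi D_t:t\ge 0\}$: it is closed by construction, bounded (all domains lie in $B(o,1/\e)$), and contains $\wi D_0$. The point requiring work is that $\wi\cF^{\a,\e}\subset\cF^{\a,\e}$, i.e.\ that no domain of class $C^2$ but not $C^{2+\a}$ creeps into the closure. For this I would invoke interior parabolic Schauder estimates for \eqref{6.5mod0}: for every $\d>0$ and every integer $k$, the boundaries $\pa\wi D_t$ with $\d\le t\le\tau_\e$ have uniformly bounded $C^{k+\a}$ norm, whereas the initial piece $\{\wi D_t:0\le t\le\d\}$ can only degenerate, as $\d\downarrow 0$, onto $\wi D_0\in\cF^{\a,\e}$ itself.

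\textbf{The three uniform estimates.} Granting the above, one works over a family of domains whose boundary lies in $B(o,1/\e)$, has two-sided reach $\ge\e$, and (away from a neighbourhood of $\wi D_0$) uniformly bounded $C^{k+\a}$ norm. For such a family: (a) the skeleton $S(D)$ is the singular set of the semiconcave function $\rho^+_{\pa D}$ on $D$; the two-sided reach bound gives a uniform semiconcavity modulus, and the $\cH^{d-1}$-finiteness of the singular set of a semiconcave function, made quantitative in the spirit of the cut-locus measure estimates of Itoh--Tanaka and Li--Nirenberg, yields $\usm(S(D))\le C(\e,d,M)$ uniformly. (b) On its regular part (points with exactly two feet on $\pa D$, met transversally) $S(D)$ is a $C^{k-1}$ hypersurface cut out by the equidistance equation between the two foot-point families; differentiating that equation twice and using the lower bound $\e$ on the distance to the feet and on the angle between the two minimizing directions (itself bounded below in terms of $\e$ and $1/\e$) bounds its second fundamental form, hence, by the Gauss equation and the ambient curvature bound on $B(o,1/\e)$, its sectional curvature. (c) A $C^2$-small perturbation of $\pa D$ produces a uniformly small perturbation of $\rho^+_{\pa D}$ and of $\n\rho^+_{\pa D}$ away from the skeleton, and $S(D)$ is the ``ridge'' of this function; with the uniform reach and curvature control from (b), the ridge moves with a controlled modulus, giving the uniform Lipschitz continuity of $D\mapsto S(D)$ from the $C^2$ metric into $(\cK(M),d_H)$ --- a uniform, $C^2$-category version of the medial-axis stability results of Chazal--Lieutier and Lieutier. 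Combining (a)--(c) with the construction above furnishes a set $\wi\cF^{\a,\e}$ satisfying Assumption~\ref{A4.0}.

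\textbf{Main obstacle.} The crux is (c): the medial axis is notoriously unstable under Hausdorff, $C^0$, or even $C^1$ perturbations of the boundary, so the argument must genuinely exploit the $C^2$ metric and the reach lower bound, and must rule out --- uniformly over the family --- the ``conflict'' configurations (three or more equidistant feet, or the skeleton approaching $\pa D$) responsible for discontinuities; the second of these is excluded by the very definition of $\cF^{\a,\e}$, but the first needs a quantitative transversality argument. A secondary difficulty is that the clean form of the first step presupposes that the a.s.\ trajectory of $(\wi D_t)$ stays in a \emph{deterministic} family of uniformly regular domains: since the normal Brownian displacement of $\pa\wi D_t$ has Gaussian tails, this rests entirely on the flow being stopped at $\tau_\e$ and on the Schauder estimates being uniform up to $\tau_\e$, which in turn is entangled with the delicate existence and regularity theory for \eqref{6.5mod0} behind Theorem~\ref{6.5mod0hyp}. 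This is presumably why the statement is, at present, only a conjecture.
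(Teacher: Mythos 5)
The paper does not prove this statement---it is explicitly posed as a conjecture, with the authors remarking only that the examples and the skeleton-motion analysis of Appendix~B suggest it should hold, but that ``a better knowledge of skeletons is necessary to solve it.'' So there is no proof in the paper to compare against; what can be assessed is whether your sketch is a plausible route and whether the gaps you flag are the genuine obstructions.

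Your overall strategy is reasonable and is consistent with the hints the paper itself gives (the exit-time stopping mechanism of Remark~\ref{rm-stop}, the two-sided reach bound built into $\cF^{\a,\e}$, and the skeleton-motion computations in Appendix~B). You have also correctly located the main obstruction in step~(c): the medial axis is not continuous, let alone Lipschitz, under $C^0$ or $C^1$ perturbations of the boundary, and the known stability results (Chazal--Lieutier and relatives) give Hausdorff-stability of \emph{filtered} variants ($\mu$- or $\lambda$-medial axes), not a uniform Lipschitz bound for the full skeleton in the $C^2$ category. Turning those into what Assumption~\ref{A4.0} asks for requires a quantitative transversality statement ruling out, uniformly over the family, the appearance of triple points and focal points arbitrarily close to existing skeleton strata---precisely the ``better knowledge of skeletons'' the authors say is missing. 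Your point~(b) also quietly needs $C^3$ control of $\pa D$ (to bound the second fundamental form of the skeleton via the equidistance equation), and your point~(a) needs the cut-locus measure bounds of Itoh--Tanaka/Li--Nirenberg type to hold with constants depending only on $\e$ and the ambient geometry, which is plausible but not off-the-shelf in this setting.

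There is one further gap in your first step that deserves to be made explicit beyond your ``secondary difficulty'' remark. Taking $\wi\cF^{\a,\e}$ to be the $C^2$-closure of the trajectory produces a \emph{random} set; Assumption~\ref{A4.0} asks for a deterministic closed bounded set capturing the trajectory a.s. Replacing your choice by a deterministic superset requires uniform-in-$\omega$ $C^{2+\a}$ bounds on $\pa\wi D_t$ up to the stopping time, and the interior Schauder estimate you invoke degenerates as $t\downarrow 0$ and only gives time-local control; one would need a global-in-time a priori estimate keyed to the $\e$-separation from the skeleton, which is not supplied by Theorem~\ref{6.5mod0hyp} as stated. Finally, even granting the $C^2$-closure lies in $\cD^{2+\a}$, one must check it lies in $\cF^{\a,\e}$, i.e.\ that the reach bounds $\rho(\pa D,S(D))\ge\e$ and $\rho(\pa D,S^{\rm out}(D))\ge\e$ pass to $C^2$-limits; this is again a skeleton-stability statement and feeds back into the same missing ingredient (c). In short, your sketch is a credible roadmap and your self-diagnosis of the obstacles matches the authors' own assessment, but it is not a proof, and the statement remains open.
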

{  Notice that Theorem 1.1 in \cite{Albano:16} proves the first part of the conjecture, i.e. the continuity of $D\mapsto S(D)$, in the case where $M=\R^d$ endowed with a possibly varying Riemannian metric. }
All examples together with the study of the motion of the skeleton in Appendix~\ref{Section3} make us believe that Conjecture~\ref{C4.0} is true. However a better knowledge of skeletons is necessary to solve it. {  We believe that the process  $(S(\tilde D_t))_{t\ge 0}$ takes its values in a set of regular stratified spaces, and that it has absolutely continuous variation in this space}.

Let us begin with some preparatory results. 
To describe the approximation of $\rho(x,\partial D)$ we are interested in, let us introduce some notations.\par
$\bullet$ 
Let $(x,D)\mapsto \ell_\e(x,D)\df(h_\e\circ \rho_{\partial D})(x)$ where  $h_\e\equiv 1$ in $[0,\e/2]$, $h_\e\equiv 0$ in $[3\e/4,\infty)$ and $h_\e$ is smooth and nonincreasing in $[0,\infty)$. When $D$ is fixed by the context, we will denote $\ell_\e(x)\df\ell_\e(x,D)$. \par
$\bullet$ 
For any  $\d\in (0,\e)$,  let $\varphi_\d:\RR_+\to \RR$ be a nonnegative function with support in $[0,\delta]$, such that
the mapping $\RR^d\ni u\mapsto \varphi_\d(\vert u\vert)$ is smooth and $\int_{\RR^d} \varphi_\d(\vert u\vert)\, du=1$ (in the sequel, $\vert \cdot\vert$ will stand for the usual Euclidean norm or for the Riemannian norm on any tangent space of $M$, depending on the context) .
\par
$\bullet$ 
Let $g_\d$ be a smooth, $1$-Lipschitz and odd function defined on $\RR$, with $g_\d(r)=r$ on $[0,\e/4]$,  $0\leq g_\d(r)\leq r$ for any $r\geq 0$,   and $g_\d(r)=c_\delta r$ on $[3\e/8,\infty)$, for an appropriate constant $c_\delta\leq 1$ very close to~$1$ that will be defined below in \eqref{cdelta}. We write $\rho_\d(x,\partial D)\df g_\d(\rho(x,\partial D))$.\par
The approximation of $\rho(x,\partial D)$ we choose is
\begin{equation}
\label{5.4.5}
\begin{split}
f_\d(x,D)&=\ell_\e(x,D)\rho_\d(x,\partial D)+(1-\ell_\e(x,D))\int_{T_xM}\varphi_\d(\vert v\vert)\rho_\d(\exp_x(v),\partial D)\, dv
\end{split}
\end{equation}
(where $dv$ stands for the Lebesgue measure on $T_xM$).
\par
Define
\bq
e(\d)&\df&\sup\{\vvvert (\nabla\exp)(u)]\vvvert,\ x\in B(o,1/\e),\ u\in B_x(0,\d)\subset T_xM\}
\eq
where $\nabla\exp(u) : T_xM\to T_{\exp_x(u)}M$ is the covariant derivative of $\exp$ with respect to the base point, $\vvvert\cdot\vvvert$ is the operator norm, when $T_{x}M$ and $T_{\exp_x(u)}M$ are endowed with their Euclidean structures, and $B_x(0,\d)$ is the open ball in $T_xM$ with center $0$ and radius $\delta$. Recall that  $\e$ is fixed as in Assumption \ref{A4.0}.
The previously mentioned constant $c_\delta$ is given by
\bqn{cdelta}
c_\delta&\df&e^{-1}(\d)\left(1-\d \|\n_1\ell_\e\|_\infty\right)\eqn\par
Notice that $c_\d$ does not depend on $D$ and is as close as we want to $1$.\par
More precisely, we have
\begin{lemma}
There exists  two constants $C_1',C_1''>0$, depending only on $\e$, such that for $\delta>0$ sufficiently small,
\bq
0\ \le\  e(\d)- 1&\le&C_1'\d\\
\vert c_\d-1\vert&\leq & C_1''\d
\eq
\end{lemma}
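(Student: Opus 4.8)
The statement is a quantitative refinement of the observation, already made above, that $c_\d\to1$ as $\d\to0$. The plan is to first control $e(\d)$ and then deduce the bound on $c_\d$ directly from the defining formula \eqref{cdelta}.

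\emph{Step 1: estimating $e(\d)-1$.} Recall that $e(\d)$ is the supremum over $x_0\in B(o,R)$ and $u\in B(0,\d)$ of the operator norm of the differential $d_x[\exp_x(\imath_{x_0,x}(u))]|_{x=x_0}$, with $T_{x_0}M$ given its Euclidean structure. At $u=0$ this differential is the identity on $T_{x_0}M$ (since $\exp_x(0)=x$), so $e(0)=1$; moreover one always has $e(\d)\ge1$ because the identity direction is attained in the limit $u\to0$ for each fixed $x_0$, so the supremum of operator norms over the enlarged parameter range cannot drop below $1$ (this gives $e(\d)-1\ge0$). For the upper bound, I would Taylor-expand the map $u\mapsto d_x[\exp_x(\imath_{x_0,x}(u))]|_{x=x_0}$ around $u=0$. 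The condition $\na_x\imath_{x_0,x}|_{x=x_0}=0$ was imposed precisely so that this differential equals $\mathrm{Id}+O(\vert u\vert)$ with a first-order term governed by the curvature of $M$ and the second derivatives of $\exp$; on the \emph{compact} ball $\bar B(o,R)$ all the relevant geometric quantities (curvature, injectivity radius lower bound, derivatives of $\exp$ and of the chosen frame field $\imath$) are bounded, uniformly in $x_0$. Hence $\vvvert d_x[\exp_x(\imath_{x_0,x}(u))]|_{x=x_0}\vvvert \le 1+C\vert u\vert$ for $\vert u\vert\le\d$ with $\d$ small, where $C=C(\e)$ depends only on $R=1/\e$ and the geometry of $M$ on $B(o,R)$. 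Taking the supremum over $\vert u\vert\le\d$ gives $0\le e(\d)-1\le C_1'\d$ with $C_1'\df C$.

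\emph{Step 2: estimating $\vert c_\d-1\vert$.} From \eqref{cdelta}, $c_\d=e^{-1}(\d)(1-\d\|\n_1\ell_\e\|_\infty)$. Write
\[
c_\d-1=\bigl(e^{-1}(\d)-1\bigr)+e^{-1}(\d)\bigl(-\d\|\n_1\ell_\e\|_\infty\bigr).
\]
By Step 1, for $\d$ small enough that $C_1'\d\le1/2$, we have $1\le e(\d)\le1+C_1'\d$, hence $1/(1+C_1'\d)\le e^{-1}(\d)\le1$, so $\vert e^{-1}(\d)-1\vert\le C_1'\d$ and $e^{-1}(\d)\le1$. Also $\|\n_1\ell_\e\|_\infty$ is a finite constant depending only on $\e$ (since $\ell_\e=h_\e\circ\rho_{\partial D}$ with $h_\e$ a fixed $\e$-dependent cutoff and $\rho_{\partial D}$ $1$-Lipschitz). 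Therefore
\[
\vert c_\d-1\vert\ \le\ C_1'\d+\|\n_1\ell_\e\|_\infty\,\d\ =\ \bigl(C_1'+\|\n_1\ell_\e\|_\infty\bigr)\d,
\]
and one sets $C_1''\df C_1'+\|\n_1\ell_\e\|_\infty$, which depends only on $\e$. This establishes both asserted inequalities.

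\emph{Main obstacle.} The only genuinely nontrivial point is Step 1, i.e.\ showing that the first-order Taylor remainder of $u\mapsto d_x[\exp_x(\imath_{x_0,x}(u))]|_{x=x_0}$ is bounded by $C\vert u\vert$ \emph{with a constant uniform over $x_0\in B(o,R)$}. This requires (a) the vanishing condition $\na_x\imath_{x_0,x}|_{x=x_0}=0$ to kill the potential $O(1)$ contribution coming from differentiating the frame field, leaving only the intrinsic geometric term, and (b) compactness of $\bar B(o,R)$ together with completeness of $M$ to get uniform bounds on curvature and on the relevant derivatives of the exponential map; one should be slightly careful near the boundary of the ball and about the domain of definition of $\exp$, but since we only need $\vert u\vert\le\d$ with $\d$ as small as we like, and $M$ is complete, $\exp_x(\imath_{x_0,x}(u))$ is well-defined and smooth in the required range. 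Everything else is elementary algebra with the explicit formula \eqref{cdelta}.
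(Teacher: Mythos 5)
Your proof is correct and follows the same route as the paper's (much terser) argument: the bound on $e(\d)-1$ comes from the normalization $\na_x\imath_{x_0,x}\vert_{x=x_0}=0$ plus uniform geometric bounds on the compact ball $\bar B(o,R)$, and the bound on $c_\d-1$ is then elementary algebra from \eqref{cdelta} using that $\|\n_1\ell_\e\|_\infty=\|h'_\e\|_\infty$ depends only on $\e$. The only cosmetic quibble is that $e(\d)\ge1$ follows immediately because $u=0$ lies in $B(0,\d)$ and gives the identity map, so no limiting argument is needed there.
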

\begin{proof}
The inequalities of the first line are well-known properties of the exponential mapping.
The second bound follows, since $\|\n_1\ell_\e\|_\infty=\|h'_\e\|_\infty$ is independent of $D$ (and of order $1/\e$).   
\end{proof}
From the second bound, we can and will assume that the function $g_\d$  is furthermore chosen so that $g_\d(r)$ converges uniformly to $r$ on compact sets of $\RR_+$, as well as the corresponding derivatives up to order~$2$ as $\d\searrow 0$. 
In addition, we choose $\d>0$ sufficiently small so that the map $(x,y)\mapsto \exp_x^{-1}(y)$ is well-defined and smooth in the $\d$-neighborhood the diagonal of $B(o,1/\e)\times B(o,1/\e)$. 
Then, for any $x\in M$, we can rewrite \eqref{5.4.5} under the form
\begin{equation}
\label{5.4.5b}
\begin{split}
f_\d(x,D)
&= \ell_\e(x,D)\rho_\d(x,\partial D)\\&+(1-\ell_\e(x,D))\int_{M}\varphi_\d(\vert \exp_x^{-1}(y)\vert )\rho_\d(y,\partial D)\, J\exp_x^{-1}(y) dy,
\end{split}
\end{equation}
where $J\exp_x^{-1}$ is the absolute value of the determinant of the Jacobian of $\exp_x^{-1}(\cdot)$.
\par
The interest of all these preparations is:
\begin{prop}
\label{A4.0bis}
For all $\d>0$ sufficiently small, the function $(x,D)\mapsto f_\d(x,D):= f_\d^D(x)$ has the following properties
\begin{itemize}
 \item $f_\d$ satisfies the conditions of Assumption~\ref{A6.1};
 \item there exists $C_1>0$ such that $\forall D\in \wi\cF^{\a,\e}$ and $x\in D$, we have 
\begin{equation}
\label{Approxdist0}
|f_\d( x,D)-\rho(x,\partial D)|\le C_1\d;
\end{equation}
 \item the differential and the Hessian of $f_\d$ with respect to the second variable  $D$ satisfy $\forall D\in \wi \cF^{\a,\e}$, $\forall x\in D\backslash S(D)$, for all vector fields $K$ normal to $\partial D$:
\begin{equation}
\label{Approxdist}
\left\langle d_2 f_\d(x, D), K\right\rangle \le C_4\|K\|_\infty \quad \hbox{and}\quad \left\| \n_2 d_2 f_\d(x,D)\left(N_{\partial D},N_{\partial D}\right)\right\|\le C_4
\end{equation}
for a $C_4$ not depending on $x, D,\d$. The second term is the second derivative along the inward normal flow on $D$.
\end{itemize}
\end{prop}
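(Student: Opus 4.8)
The plan is to verify the three listed properties in turn. For the first — that $f_\d$ (together with an associated $\s_c^D$) realizes Assumption~\ref{A6.1} — the key structural observation is that the genuinely non-smooth term $\ell_\e(x,D)\,\rho_\d(x,\partial D)=\ell_\e(x,D)\,g_\d(\rho^+_{\partial D}(x))$ is, by construction, supported in $\{\rho_{\partial D}(x)<3\e/4\}$, which for every $D\in\cF^{\a,\e}$ lies strictly inside the shell $\{\,|\rho^+_{\partial D}|<\e\,\}$ on which $\rho^+_{\partial D}$ is $C^{2+\a}$ with Hessian, and $\a$-H\"older seminorm of Hessian, bounded uniformly over $\cF^{\a,\e}$ (exactly as in Proposition~\ref{P6.3}, using that the reach is $\ge\e$ and that the curvature of $M$ is controlled on $\bar B(o,R)$); since $g_\d$ is a fixed smooth function, this term is $C^{2+\a}$ in $x$, uniformly in $D$. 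For the averaged term I would work with the representation \eqref{5.4.5b}: differentiating in $x$ at $x=x_0$ and using $\n_x\imath_{x_0,x}\vert_{x=x_0}=0$, the $x$-derivatives fall on the argument $\exp_x(\imath_{x_0,x}(u))$ of $\rho_\d$ (off $S(D)$) or, in the second form, on the $C^\infty$ kernel $\varphi_\d(|(\exp_x\circ\imath_{x_0,x})^{-1}(y)|)\,J(\exp_x\circ\imath_{x_0,x})^{-1}(y)$ multiplying the merely bounded factor $\rho_\d(y,\partial D)$; hence for each fixed $\d>0$ the averaged term is $C^\infty$ in $x$, with all derivative bounds depending on $\d$ and on the geometry of $M$ on $\bar B(o,R)$ only — thus uniform in $D$, and in particular smooth across $S(D)$. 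The Fr\'echet regularity in $D$ and its uniformity are handled as in Proposition~\ref{P6.3}, using that for $x\notin S(D)$ the map $D\mapsto\rho(x,\partial D)$ is smooth with $\langle\n_2\rho(x,\partial D),K\rangle=-\langle K,N^D\rangle(\pi(x))$ ($\pi(x)$ the foot point) and that along the normal flow $\rho^+_{\partial D(s)}=\rho^+_{\partial D}-s$ exactly; these propagate through $h_\e$ and $g_\d$, and in the averaged term through the foot-point map at $\exp_x(v)$, which is well defined since $|\rho^+_{\partial D}(\exp_x(v))|\le|\rho^+_{\partial D}(x)|+\d<\e$ on the relevant set.

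The bound $\|\n_1 f_\d^D\|_\infty\le1$ is the heart of the matter, and is precisely what $e(\d)$ and the constant $c_\d$ of \eqref{cdelta} were designed for. Write $f_\d^D=\ell_\e\,\rho_\d+(1-\ell_\e)A$ with $\rho_\d=g_\d\circ\rho^+_{\partial D}$ and $A$ the averaged term, so that $\n_1 f_\d^D=(\n_1\ell_\e)(\rho_\d-A)+\ell_\e\n_1\rho_\d+(1-\ell_\e)\n_1 A$. Where $\ell_\e=1$ (i.e.\ $\rho_{\partial D}(x)\le\e/2$) one has $\n_1 f_\d^D=g_\d'(\rho^+_{\partial D})\n\rho^+_{\partial D}$, of norm $\le1$ because $g_\d$ is $1$-Lipschitz. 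Where $\ell_\e=0$ (i.e.\ $\rho_{\partial D}(x)\ge3\e/4$), and $\d$ is small enough that every $\exp_x(v)$ with $|v|\le\d$ still has $\rho_{\partial D}>3\e/8$, $g_\d$ acts on all the relevant arguments by multiplication by $c_\d$, so differentiating $A$ through \eqref{5.4.5b} at $x=x_0$ gives
\begin{equation*}
\|\n_1 A(x)\|\le c_\d\sup_{|u|\le\d}\vvvert d_x[\exp_x(\imath_{x_0,x}(u))\vert_{x=x_0}]\vvvert\le c_\d\,e(\d)=1-\d\,\|\n_1\ell_\e\|_\infty\le1 .
\end{equation*}
In the transition region $\rho_{\partial D}(x)\in(\e/2,3\e/4)$ one likewise has $g_\d=c_\d\,{\rm Id}$ on all arguments that enter, so $\|\n_1\rho_\d\|=c_\d$, $\|\n_1 A\|\le c_\d e(\d)$, and $|\rho_\d(x,\partial D)-A(x)|\le c_\d\d\le\d$ (by $1$-Lipschitzness of $\rho(\cdot,\partial D)$); hence, using $\|\n_1\ell_\e\|_\infty=\|h_\e'\|_\infty$,
\begin{equation*}
\|\n_1 f_\d^D(x)\|\le\d\,\|\n_1\ell_\e\|_\infty+\ell_\e c_\d+(1-\ell_\e)c_\d e(\d)\le\d\,\|\n_1\ell_\e\|_\infty+(1-\d\,\|\n_1\ell_\e\|_\infty)=1 .
\end{equation*}
The coincidence $f_\d^D=\rho^+_{\partial D}$ near $\partial D$ is immediate: on $\{\,|\rho^+_{\partial D}|<\e/4\,\}$ one has $\ell_\e\equiv1$ and $g_\d=\mathrm{id}$. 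The Hessian bound of Assumption~\ref{A6.1} follows from the same dichotomy: on $\{\,|\rho^+_{\partial D}|<\e/4\,\}$, ${\rm Hess}(g_\d\circ\rho^+_{\partial D})$ is controlled uniformly over $\cF^{\a,\e}$ by the uniform $C^{2+\a}$ control of $\rho^+_{\partial D}$ on the shell and the $C^2$ norm of $g_\d$; away from $\partial D$ the averaged term has, for fixed $\d$, bounded derivatives up to order $2+\a$ uniformly in $D$; on the overlap $\ell_\e$ is smooth, so the product is controlled, and the $\a$-H\"older bound on the Hessian is obtained the same way. The auxiliary $\s_c^D$ with $\s^D(\s^D)^\ast={\rm Id}$ is then produced exactly as in Proposition~\ref{P6.3}, with $\s_1=\n_1 f_\d^D$.

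For \eqref{Approxdist0}, note that $f_\d(x,D)$ is a convex combination of $g_\d(\rho(x,\partial D))$ and $\int\varphi_\d(|v|)g_\d(\rho^+_{\partial D}(\exp_x(v)))\,dv$, so it suffices to control each against $\rho(x,\partial D)$. Since $g_\d=\mathrm{id}$ on $[0,\e/4]$, $g_\d$ is $1$-Lipschitz, $g_\d=c_\d\,{\rm Id}$ beyond $3\e/8$ with $|c_\d-1|\le C_1''\d$, and $0\le\rho(\cdot,\partial D)\le 2R$ on $B(o,R)$, one gets $|g_\d(r)-r|\le C\d$ for $r\in[0,2R]$; hence the first piece lies within $C\d$ of $\rho(x,\partial D)$, and — using also that $\rho(\cdot,\partial D)$ is $1$-Lipschitz and $\mathrm{supp}\,\varphi_\d\subset[0,\d]$ — the second piece lies within $C\d+\d$ of it. The convex combination inherits the bound, which is \eqref{Approxdist0} (for $D\in\wi\cF^{\a,\e}$, indeed for any $D\in\cF^{\a,\e}$).

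Finally, for \eqref{Approxdist}, decompose $\langle\n_2 f_\d(x,D),K\rangle=\langle\n_2\ell_\e,K\rangle(\rho_\d-A)+\ell_\e\langle\n_2\rho_\d,K\rangle+(1-\ell_\e)\langle\n_2 A,K\rangle$. On $\mathrm{supp}\,\langle\n_2\ell_\e,K\rangle$ one has $\rho_{\partial D}(x)\in(\e/2,3\e/4)$, where $|\rho_\d-A|\le\d$ as above, so the first summand is $\le\d\,\|h_\e'\|_\infty\|K\|_\infty$; the last two are each $\le\|K\|_\infty$ because $|g_\d'|\le1$, $\int\varphi_\d=1$, and $|\langle\n_2\rho(y,\partial D),K\rangle|=|\langle K,N^D\rangle(\pi(y))|\le\|K\|_\infty$ at $y=x$ and for a.e.\ $y=\exp_x(v)$. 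Hence $\langle\n_2 f_\d(x,D),K\rangle\le C_4\|K\|_\infty$ for $\d$ small, with $C_4$ depending on $\e$ only. For the second inequality, deforming $D$ along the normal flow $D(s)$ gives $\rho^+_{\partial D(s)}=\rho^+_{\partial D}-s$, so $f_\d(x,D(s))=h_\e(\rho_{\partial D}(x)-s)\,g_\d(\rho_{\partial D}(x)-s)+(1-h_\e(\rho_{\partial D}(x)-s))\int\varphi_\d(|v|)\,g_\d(\rho^+_{\partial D}(\exp_x(v))-s)\,dv$; differentiating twice in $s$ at $s=0$ yields only terms in $h_\e',h_\e'',g_\d',g_\d''$ times the bounded factors $\rho_{\partial D}(x)\le 2R$ and $|\rho_\d(x,\partial D)-A(x)|\le\d$ (on $\mathrm{supp}\,h_\e''$ the arguments of $g_\d$ are again in the linear regime), which gives $\|\n_2 d_2 f_\d(x,D)(N_{\partial D},N_{\partial D})\|\le C_4$. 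I expect the main obstacle to be the rigorous control of the $D$-dependence of the distance function near the skeleton; it is tamed by the fact that $\ell_\e$ confines the distance-based term $\rho_\d(x,\partial D)$ to $\{\rho_{\partial D}<3\e/4\}$, well separated from $S(D)$, so that across $S(D)$ only the mollified — hence, for fixed $\d$, smooth — term survives, while the $\d$-uniformity needed in \eqref{Approxdist0}--\eqref{Approxdist} reduces to the $O(\d)$-closeness of $g_\d$ to the identity and the $1$-Lipschitz property of $\rho(\cdot,\partial D)$.
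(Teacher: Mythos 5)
Your proof is correct and follows essentially the same route as the paper's: the same decomposition $\n_1 f_\d=(\n_1\ell_\e)(\rho_\d-A)+\ell_\e\n_1\rho_\d+(1-\ell_\e)\n_1A$, the same case split $\rho_{\pa D}\le \e/2$ versus $\ge\e/2$, the same exploitation of $e(\d)$ and of the tuned constant $c_\d$ (via $c_\d e(\d)=1-\d\|\n_1\ell_\e\|_\infty$) to force $\|\n_1 f_\d\|\le1$, and for \eqref{Approxdist} the same two key identities $\langle\n_2\rho(x,\pa D),K\rangle=-\langle K,N^D\rangle(P(x))$ and $\n_2 d_2\rho(N,N)=0$, with the $\n_2\ell_\e$ contribution controlled by $\|h_\e'\|_\infty$. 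You spell out the transition region $\rho_{\pa D}\in(\e/2,3\e/4)$ explicitly and use the second representation in \eqref{5.4.5b} (derivative falling on the smooth kernel) to get regularity of the averaged term across $S(D)$; the paper leaves both of these more implicit, but the underlying argument is the same. Your bound $|g_\d(r)-r|\le C\d$ on $[0,2R]$ giving \eqref{Approxdist0} is a clean substitute for the paper's less transparent estimate and reaches the same $C_1$ up to constants.
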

\begin{proof}
 We first prove
$\|d_1 f_\d(x,D)\|\le 1$, $d_1$ denoting the differential with respect to the first or the $x$ variable. 
For $x\in B(o,1/\e)$ 
we have 
\begin{equation}
\label{5.4.6}
\begin{split}
d_1f_\d(x,D)=&\ell_\e(x,D)d_1\rho_\d(x,\partial D)\\&+(1-\ell_\e(x,D))d_1\left(\int_{T_xM}\varphi_\d(\vert u\vert )\rho_\d(\exp_x(u),\partial D)\, du\right)\\
&+d_1\ell_\e(x,D)\int_{T_xM}\varphi_\d(\vert u\vert )\left(\rho_\d(x,\partial D)-\rho_\d(\exp_x(u),\partial D)\right)\, du.
\end{split}
\end{equation}
Notice that if $x'$ is close to $x$ and $\imath_{x,x'} :T_{x}M\to T_{x'}M$ is the parallel transport along the minimal geodesic from $x$ to $x'$, then 
$$
\int_{T_{x'}M}\varphi_\d(\vert u\vert )\rho_\d(\exp_{x'}(u),\partial D)\, du=\int_{T_{x}M}\varphi_\d(\vert u\vert )\rho_\d(\exp_{x'}(\imath_{x,x'}(u),\partial D)\, du.
$$
Taking the differential with respect to $x'$ at $x'=x$ and using $\n_{x'}\vert_{x'=x}\imath_{x,x'}=0$ by definition of parallel transport yields
$$
d_1\left(\int_{T_{x}M}\varphi_\d(\vert u\vert )\rho_\d(\exp_{x}(u),\partial D)\, du\right)=\int_{T_{x}M}\varphi_\d(\vert u\vert )d_1\rho_\d((\n\exp)(u),\partial D)\, du.
$$
If $\rho(x,\partial D)\le \e/2$ then $\ell_\e(x,D)=1$,  $\n\ell_\e(x,D)=0$ and 
\begin{align*}
\|d_1f_\d(x,D)\|&\le \ell_\e(x,D)\|d_1\rho_\d(x,\partial D)\|\le 1.
\end{align*}
If $\rho(x,\partial D)\ge \e/2$ then for $\d\le \e/8$, we have, for $u\in T_xM$ with $\vert u\vert \leq \delta$, $\rho(\exp_x(u),\pa D)\geq 3\e/8$. It follows 
\begin{align*}
\|d_1f_\d(x,D)\|\le&\ell_\e(x)e^{-1}(\d)\left(1-\d \|d_1\ell_\e\|_\infty\right)\\&+(1-\ell_\e(x))\int_{T_xM}\varphi_\d(\vert u\vert )c_d\|(\n\exp)(u)\|\, du\\
&+\|d_1\ell_\e(x)\|_\iy\int_{T_xM}\varphi_\d(\vert u\vert )\d\, du\\&\le 1.
\end{align*}
It is easily checked that the function $f_\d$ satisfies the other properties of Assumption~\ref{A6.1}. Let us check that it also satisfies~\eqref{Approxdist0}.
\par
We have 
\begin{equation}\label{5.4.6.1}
f_\d(x,D)-\rho_\d(x,\partial D)=(1-\ell_\e(x,D))\int_{T_xM}\varphi_\d(\vert u\vert )\left(\rho_\d(\exp_x(u),\partial D)-\rho_\d(x,\partial D)\right)\, du
\end{equation}
which implies 
$$
\left|f_\d(x,D)-\rho_\d(x,\partial D)\right|\le \d.
$$
On the other hand
$$
\vert \rho(x,\partial D)-\rho_\d(x,\partial D)\vert\le
(1-c_\delta) \max \lt(\f2\epsilon , \f{3\epsilon}8\rt)
\le C_1'''\d
$$
for some constant $C_1'''>0$ (depending on $\e$).
This yields~\eqref{Approxdist0} with $C_1\df 1+C_1'''$.

For proving~\eqref{Approxdist}, we take a vector field $K(y)=k(y)N(y)$, $y\in\partial D$ and compute
\begin{equation}
\label{5.4.7}
\left\langle d_2\rho(x,\partial D),K\right\rangle =\left\langle -N(P(x)), K(P(x))\right\rangle=-k(P(x))
\end{equation}
where $P(x)$ is the projection of $x$ onto $\partial D$, and 
\begin{equation}
\label{5.4.9}
\begin{split}
 \n_2 d_2 \rho(x, \partial D)\left(N_{\partial D}, N_{\partial D}\right)=0.
\end{split}
\end{equation}
Remarking that $\|d_2\ell_\e(x,D)\|$ is bounded by $\|h_\e'\|_\infty$, we get~\eqref{Approxdist} via a straightforward computation.
\end{proof}

\begin{thm}
\label{T4.1}
Fix $D_0=\tilde D_0\in  \wi\cF^{\a,\e}$ and let $X_0\sim\SU(D_0)$.
 Under Assumption~\ref{A4.0}, 
there exists a pair $(X_t,D_t)_{t\ge 0}$ of $\tau_\e$ intertwined processes in the sense of Definition~\ref{D6.0}, such that  the process $(D_t)_{t\ge 0}$ satisfies
\begin{equation}
 \label{4.0}
 \begin{split}
  d\partial D_t(y) 
  & =N^{D_t}(y) \Biggl( \left  \langle dX_t, N^{D_t}(X_t)\right\rangle +\left(\f12 h^{D_t}(y)-h^{D_t}(X_t){{   \un_{D_t  \backslash S_t}(X_t) }} \right)\,dt \\
 & -2\sin(\theta^{S_t}(X_t))\, dL_t^{S_t}(X)\Biggr)
  \end{split}
\end{equation}
Here  $\di \theta^{S_t}(x)=\pi/2-\varphi^{S_t}(x)$, $\varphi^{S_t}(x)$ being the angle between  the orthogonal line to $S_t$ at $x$ and any of the two minimal geodesics from $\partial D_t$ to $x\in S_t$ (recall $S_t$ is the \textit{regular} skeleton of $D_t$, see Appendix \ref{Section2}). In other words $\theta^{S_t}(x)$ is the smallest angle between $S_t$ and the geodesics. 
  The process $L^{S_t}$ is the local time of $X_t$ at $S_t:=S(D_t)$:
\begin{equation}
 \label{4.2}
 L_t^{S_t}(X)=\lim_{\b\searrow 0}\f1{2\b}\int_0^t 1_{\{X_s\in S_s^\b\}}\,ds,
\end{equation}
$S_s^\b$ being the thickening of the regular part of $S_s$ in normal direction, of thickness $\b$ in both directions. 
\end{thm}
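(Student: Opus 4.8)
The plan is to obtain $(X_t,D_t)_{t\ge 0}$ as a weak limit of the regular intertwined pairs $(X_t^\delta,D_t^\delta)_{t\ge 0}$ produced by Theorem~\ref{C6.1} applied to the approximating functions $f_\delta=f_\delta^D$ of Proposition~\ref{A4.0bis}. For each fixed $\delta>0$, Proposition~\ref{A4.0bis} guarantees that $f_\delta$ satisfies Assumption~\ref{A6.1}, so Theorem~\ref{C6.1} yields a $\tau_\e$-intertwined pair solving \eqref{6.5} with $f^{D_t}$ replaced by $f_\delta^{D_t}$; in particular $D_t^\delta$ is a diffusion with generator $\wi\SL$, which does \emph{not} depend on $\delta$, so the laws of $(D_t^\delta)_{t\ge 0}$ are all the same, equal to the law of the Markov process $(\wi D_t)_{t\ge 0}$ of Theorem~\ref{6.5mod0hyp}. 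Thus only the $X$-component and the coupling vary with $\delta$. First I would establish tightness of the family $\{(X_t^\delta,D_t^\delta,W_t^\delta,(W_t^m)^\delta)_{t\ge 0}\}_{\delta>0}$ in the appropriate path space (using the uniform bounds on $\|\n_1 f_\delta\|_\infty\le 1$, on $\D f_\delta$, and on the Hessian data from Assumption~\ref{A6.1} and Proposition~\ref{A4.0bis}, together with Assumption~\ref{A4.0} which controls the skeletons uniformly), extract a weakly convergent subsequence, and realize the limit on a common probability space via Skorokhod representation.

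The second step is to identify the limiting equation. Writing the $X^\delta$ equation in the form $dX_t^\delta=\n_1 f_\delta^{D_t^\delta}(X_t^\delta)\,dW_t^\delta+\sigma_{\mathrm c}^{D_t^\delta}(X_t^\delta)\,d(W_t^m)^\delta$ and projecting the $D^\delta$ equation onto the normal direction, the martingale parts pass to the limit directly once I know $\n_1 f_\delta\to\n\rho_{\partial D}$ and $\sigma_{\mathrm c}^\delta\to\sigma_{\mathrm c}$ away from the skeleton (which holds by construction, since $g_\delta(r)\to r$ with two derivatives and $c_\delta\to1$). The subtle point is the drift: $\D f_\delta^{D_t}(X_t)$ does not converge to $\D\rho_{\partial D}(X_t)=-h^{D_t}(X_t)$ in a naive pointwise sense, because $\rho_{\partial D}$ fails to be $C^2$ on $S(D)$ and $\D\rho_{\partial D}$ carries a singular (surface-measure) part supported on the regular skeleton. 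The correct statement is that $\int_0^t\D f_\delta^{D_s^\delta}(X_s^\delta)\,ds$ converges to $\int_0^t h^{D_s}(X_s)\,ds+2\int_0^t\sin(\theta^{S_s}(X_s))\,dL_s^{S_s}(X)$. I would prove this by an occupation-time / It\^o--Tanaka argument: apply It\^o's formula to $f_\delta^{D_t^\delta}(X_t^\delta)$ (legitimate for fixed $\delta$ since $f_\delta$ is $C^{2+\a}$ in both variables), compare with the candidate identity for $\rho^+_{\partial D_t}(X_t)$, and show the difference of the bounded-variation terms converges to the local-time term; the factor $2$ and the $\sin\theta$ weight come from the two-sided structure of the thickening $S_s^\beta$ and from the angle at which $X$ crosses $S$, exactly as in the classical Pitman/Bessel computation recovered in Section~\ref{Section5}. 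For the passage to the limit of the local-time functionals themselves I would use the fact that $\frac1{2\beta}\int_0^t 1_{\{X_s^\delta\in S^\beta_s\}}\,ds$ is, for $\delta$ small, uniformly close (in probability, uniformly on compact time intervals) to $-\int_0^t(\ell_\e\,\text{correction})\cdots$; more robustly, one shows $(X_t^\delta)$ converges to a semimartingale $X$ whose normal-coordinate semimartingale near $S_s$ has a well-defined symmetric local time, and that $L^{S_\cdot}(X^\delta)\to L^{S_\cdot}(X)$ along the subsequence.

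The third step is to check that the limiting pair is still intertwined, i.e.\ $\tau_\e$-intertwined in the sense of Definition~\ref{D6.0}. Intertwining is a statement about one-dimensional-time conditional laws, and it is preserved under weak convergence once we know: (i) for each $\delta$, conditioned on $\SF^{D^\delta}_{\tau'}$, $X^\delta_{\tau'}$ is uniform on $D^\delta_{\tau'}$ (given by Theorem~\ref{C6.1}); (ii) the joint laws converge; (iii) the map $D\mapsto\SU(D)$ is continuous on $\wi\cF^{\a,\e}$ for the relevant topology, which follows from Assumption~\ref{A4.0}. Concretely, I would test against functions of the form $g(X_{t})\,\fF(D_{[0,t]})$ with $g\in\cC^\iy(M)$ and $\fF$ continuous bounded, pass to the limit using Skorokhod representation, and use $\E[g(X_t^\delta)\mid\SF^{D^\delta}_t]=\SU(D_t^\delta)(g)=\wi\nu_t(\SU(D_\cdot)(g))$-type identities to conclude $\E[g(X_t)\fF]=\E[\SU(D_t)(g)\,\fF]$; an optional-stopping/monotone-class argument upgrades this from fixed times to bounded $\SF^D$-stopping times $\tau'\le\tau_\e$. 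The main obstacle is the drift convergence of Step~2 --- controlling the singular part of $\D\rho_{\partial D}$ on the skeleton uniformly in $\delta$ and extracting the precise constant $2\sin\theta^{S_s}$ --- since this is where the non-smoothness of the distance function, the geometry of the (morphological) skeleton, and the uniform hypotheses of Assumption~\ref{A4.0} all have to be combined; everything else is tightness bookkeeping and soft weak-convergence arguments.
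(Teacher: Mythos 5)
Your proposal follows the same architecture as the paper's actual proof: regularize via the $f_\delta$ of Proposition~\ref{A4.0bis}, invoke Theorem~\ref{C6.1} for each fixed $\delta$, observe that $(D^\delta_t)_{t\geq0}$ has a $\delta$-independent law (the $\wi\SL$-diffusion), establish tightness, extract a subsequential limit via Skorokhod, identify the limiting bounded-variation parts by comparing the It\^o expansion of $f_\delta(X^\delta_t,D^\delta_t)$ with the It\^o--Tanaka formula for $\rho(X_t,\partial D_t)$ (Proposition~\ref{PropF1}), and pass the intertwining identity $\E[g(X^\delta_t)F_k(D^\delta_t)]=\E[\SU(D^\delta_t)(g)\,F_k(D^\delta_t)]$ to the limit.

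Two technical points you gloss over are genuine obstructions in your outline. First, passing time integrals like $\int_0^t\langle\n_1 f_\delta(X^\delta_s,D^\delta_s),\newdot\rangle\,ds$ to the limit requires an infinite-dimensional version of Zheng's lemma on convergence of additive functionals (this is Lemma~\ref{Zheng} in Appendix~\ref{appH}, built on the dissimilarity measures $d_0$ and $d_\a$ on $\SC([0,\iy),M\times\wi\cF^{\a,\e})$); asserting that "the martingale parts pass to the limit directly" hides this, and in fact the paper uses Zheng's results even to identify the limiting martingale $W_t=\int_0^t\langle N^{D_s}(X_s),dX_s\rangle$ via the bracket $dX^\delta_t\otimes dW^\delta_t=\n_1 f_\delta\,dt$. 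Second, and more seriously, the drift of $d\partial D^\delta_t$ contains $\Delta f_\delta^{D^\delta_t}(X^\delta_t)$, which is \emph{not} uniformly controlled as $X^\delta_t$ approaches the skeleton, so the term $\int_0^t\langle\n_2 f_\delta(X^\delta_s,D^\delta_s),d\partial D^\delta_s\rangle$ in your It\^o expansion does not converge by a direct argument. The paper circumvents this by rewriting the $D^\delta$-evolution in terms of the $\delta$-independent quantity $d\mu(D^\delta_s)/\usm(\partial D^\delta_s)$ (see \eqref{4.17a}--\eqref{4.1.9}) and by adding and subtracting $c_\delta\int_0^t d\mu(D^\delta_s)/\usm(\partial D^\delta_s)$ so that the remaining integrand $\n_2 f_\delta+c_\delta N|_{\partial D}$ is supported in a fixed neighbourhood of $\partial D$ where the drift is controlled (equations \eqref{4.10a}--\eqref{4.10b}). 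Without this compensation the convergence \eqref{4.10} is not accessible, and the subtraction scheme that yields the $\int\D_1 f_\delta$ limit collapses. Your "more robust" alternative of proving $L^{S_\cdot}(X^\delta)\to L^{S_\cdot}(X)$ directly is also not what the paper does: the local time appears only through the It\^o--Tanaka formula applied to the \emph{limit} process, not as a limit of local times of the $X^\delta$.
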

\begin{remark}
Compared to Section~\ref{Section6} with $f^D$ replaced by distance to boundary $\rho_{\partial D}$, we have 
outside the skeleton  $S^D$
\begin{equation}
\label{4.1.1}
\n \rho_{\partial D}(x)=N^D(x)\quad\hbox{and}\quad \D\rho_{\partial D}(x)=-h^D(x)
\end{equation}
and we will see that on the moving skeleton $S_t=S^{D_t}$:
\begin{equation}
\label{4.1.2}
``\D\rho_{\partial D_t}(X_t)\, dt"=-2\sin(\theta^{S_t}(X_t))\, dL_t^{S_t}(X).
\end{equation}

\end{remark}

\begin{proof}
£Under Assumption~\ref{A4.0},
Proposition~\ref{A4.0bis} allows us to construct for
 each $\d>0$, intertwined processes $(X_t^\d,D_t^\d)_{t\ge 0}$ started at $(X_0^\d,D_0^\d)=(X_0,D_0)$, associated with the functions $f_\d^D$, stopped at $\tau_\e^\d$, the exit time from $\wi \cF^{\alpha,\e}$. 
We have from Equation~\eqref{6.5}
 \begin{equation}\label{4.1.4}
 \begin{split}
 d\partial D^\d_t(y)&=N^{D^\d_t}(y)\left(dW_t^\d
+\left(\f12 h^{D^\d_t}(y)+\Delta f_{\d}^{D^\d_t}(X_t^\d)\right)\,dt\right)
\end{split}
\end{equation}
for some Brownian motion $W_t^\d$. On the other hand,
from Proposition~\ref{P6.2} and~\eqref{6.1},
 \begin{equation}\label{4.1.3}
 (\wi D_t^\d)_{t\ge 0}:=(D_t^\d)_{t\ge 0}
 \end{equation}
 satisfies equation~\eqref{6.5mod0}:
\begin{equation}\label{4.1.5}
 \begin{split}
d\partial D^\d_t(y)&= N^{D_t^\d}(y)\left( d\wi{W}^\d_t
+\left( \f12 h^{D^\d_t}(y) - \f{\usm^{\partial D^\d_t}(\partial D^\d_t)}{\mu(D^\d_t)} \right)  \,dt \right) \\
\end{split}
\end{equation}
where  $\wi{W}_t^\d$ is the $ \SF^{D^\d}_t-$Brownian motion
\begin{equation}\label{4.1.6} d\wi{W}^\d_t=dW_t^\d +\Delta f_{\d}^{D^\d_t}(X_t)\,dt + \f{\usm^{\partial D^\d_t}(\partial D^\d_t)}{\mu(D^\d_t)} \,dt.
 \end{equation}

 A remarkable fact about all $(X_t^\d,D_t^\d)_{t\ge 0}$ is that their marginals are constant in law. Notice that also $((D_t^\d)_{t\ge 0},\tau_\e^\d)$ is constant in law since $\tau_\e^\d$ is a functional of $(D_t^\d)_{t\ge 0}$ independent of~$\d$. As a consequence, the family 
\begin{equation}
\label{4.4.5}\left((X_t^\d,D_t^\d,  W_t^\d, \wi W_t^\d, W_t^{\d,m})_{t\ge 0},\tau_\e^\d\right)
\end{equation}
is tight (in~\eqref{4.4.5} the Brownian motions $W_t^\d$ and $W_t^{\d,m}$ are the ones defined by equation~\eqref{6.5}). Denote by 
\begin{equation}\label{4.4.6}
\left((X_t,D_t, W_t, \wi W_t, W_t^{m})_{t\ge 0},\tau_\e\right)
\end{equation}
a limiting point. Let us prove the intertwining. 

Using Proposition~\ref{P6.2}, for any smooth functions $g$ and $k$ on $M$, 
 \begin{align*}
 \EE[g(X^{\d}_t)F_k(D^{\d}_t)]&=  \EE [\EE[g(X^{\d}_t)F_k(D^{\d}_t)\vert \SF^{D^{\d}}_t]]\\
 &=   \EE [ \SU(D^{\d}_t)(g) F_k(D^{\d}_t)] \\ 
 &= \EE \left[\frac{F_g(D^{\d}_t)}{F_1(D^{\d}_t)}  F_k(D^{\d}_t) \right]\\
 \end{align*}
and passing to the limit yields the intertwining. 
  
  This property of $(D_t^\d, \wi W_t^\d)_{t\ge 0}$ being constant in law passes to the limit, and we have 
  \begin{equation}\label{4.1.7}
 \begin{split}
d\partial D_t(y)&= N^{D_t}(y)\left( d\wi{W}_t
+\left( \f12 h^{D_t}(y) - \f{\usm^{\partial D_t}(\partial D_t)}{\mu(D_t)} \right)  \,dt \right).
\end{split}
\end{equation}
We need to work with real-valued processes: 
we have from~\eqref{4.19}, for all $\d>0$, 
\begin{equation}
\label{4.17a}
\int_0^t\f{d\mu(D_s^\d)}{\underline{\mu}(\partial D_s^\d)}=-W_t^\d-\int_0^t\Delta_1f_\d (X_s^\d, D_s^\d)\, ds.
\end{equation}
This together with~\eqref{4.1.6} yields 
\begin{equation}\label{4.1.9}
 \begin{split}
d\partial D^\d_t(y)&= N^{D_t^\d}(y)\left( -\f{d\mu(D_s^\d)}{\underline{\mu}(\partial D_s^\d)}
+ \f12 h^{D^\d_t}(y)   \,dt \right) \\
\end{split}
\end{equation}
Again by constantness in law: 
\begin{equation}\label{4.1.10}
 \begin{split}
d\partial D_t(y)&= N^{D_t}(y)\left( -\f{d\mu(D_s)}{\underline{\mu}(\partial D_s)}
+ \f12 h^{D_t}(y)   \,dt \right).
\end{split}
\end{equation}
 So to prove our result we only need to prove that 
 \begin{equation}
\label{4.18a}
\int_0^t\f{d\mu(D_s)}{\underline{\mu}(\partial D_s)}=-W_t+\int_0^t h^{D_s} (X_s)\, ds+\int_0^t2\sin\left(\theta^{S_s}(X_s)\right)\,dL_s^{S_s}(X)
\end{equation}
and that 
\begin{equation}
\label{4.18b}
W_t=\int_0^t\langle N^{D_s}(X_s), dX_s\rangle. 
\end{equation}

Let us prove~\eqref{4.18b}. In all this paragraph we consider $M$ as isometrically embedded in some Euclidean space. In particular we are allowed to integrate vectorial quantities. We use  the fact that $dX_t^\d\otimes dW_t^\d$ converges in law to $dX_t\otimes dW_t$ (where $\otimes$ stands for bracket of semimartingales). But $dX_t^\d\otimes dW_t^\d$ is equal to $\n_1 f_\d(X_t^\d,D_t^\d)\, dt$. 
 Then by Lemma~\ref{Zheng} applied to $\n_1 f_\d(X_t^\d,D_t^\d)$ (which is uniformly bounded) and $U=\{(x,D), \ x\notin S(D)\}$ defined in~\eqref{H3}  we see that   the integral of $\n_1 f_\d(X_t^\d,D_t^\d)\, dt$ converges to the one of $N^{D_t}(X_t)\, dt$. But almost surely  $N^{D_t}(X_t)$ has norm~$1$ $dt$-a.e., implying that $dW_t=\langle N^{D_t}(X_t), dX_t\rangle$.

Let us now establish~\eqref{4.18a}.  It will be a consequence of the convergence of $(f_\d(X_t^\d, D_t^\d))_{t\ge 0}$ to $(\rho(X_t,\partial D_t)_{t\ge 0}$. 

 Write the It\^o formula for $f_\d(X_t^\d, D_t^\d)$: 
\begin{equation}
\label{4.14bis}
\begin{split}
d\left(f_\d(X_t^\d, D_t^\d)\right)=&\langle d_1f_\d(X_t^\d, D_t^\d), dX_t^\d\rangle +\f12\Delta_1f_\d(X_t^\d, D_t^\d)\,dt\\
&+\langle d_2f_\d(X_t^\d, D_t^\d), d\partial D_t^\d\rangle +\f12\n_2d_2f_\d(X_t^\d, D_t^\d)(d\partial D_t^\d,d\partial D_t^\d)\,dt\\
&+\langle \n_2 d_1f_\d(X_t^\d, D_t^\d),  d\partial D_t^\d \otimes dX_t^\d\rangle.
\end{split}
\end{equation}
From Proposition~\ref{A4.0bis}, possibly by extracting a subsequence,
 \begin{equation}
\label{4.8}
\left(f_\d(X_t^\d, D_t^\d)\right)_{t\ge 0}\overset{\SL}{\longrightarrow}\left(\rho(X_t, \partial D_t)\right)_{t\ge 0}.
\end{equation}

From~\eqref{5.4.6.1} we get for $i=1,2$,
\begin{equation}
 \label{5.4.6.2}
 \begin{split}
 &d_if_\d(x,D)-d_i\rho_\d(x,\partial D)\\
&=-d_i\ell_\e(x,D)\int_{T_xM}\varphi_\d(\vert u\vert )\left(\rho_\d(\exp_x(u)),\partial D)-\rho_\d(x,\partial D)\right)\, du\\&
 +(1-\ell_\e(x,D))\int_{T_xM}\varphi_\d(\vert u\vert )\left(d_i\rho_\d(\exp_x(u)),\partial D)-d_i\rho_\d(x,\partial D)\right)\, du
\end{split}
 \end{equation}
From this we see that   $d_1f_\d(\cdot , D)$ converges, locally uniformly outside $S(D)$,  to $d_1\rho(\cdot , \partial D)$ with respect to the distance $d_0$ of Appendix~\ref{appH}. We obtain, with Lemma~\ref{Zheng}, possibly by again extracting a subsequence, that
\begin{equation}
\label{4.9a}
\left(\int_0^t\langle d_1f_\d(X_s^\d, D_s^\d), dX_s^\d\rangle\right)_{t\ge 0}\overset{\SL}{\longrightarrow}\left(\int_0^t\langle d_1\rho (X_s,\partial  D_s), dX_s\rangle\right)_{t\ge 0}.
\end{equation}
More precisely, we have a sequence of martingales converging in law to a martingale $M_t$ which is a Brownian motion by  Theorem~3 in~\cite{Zheng:85}. For identifying the limiting martingale we use the convergence of $\langle d_1f_\d(X_s^\d, D_s^\d), dX_s^\d\rangle\otimes dX_s^\d$ to $  dM_s\otimes dX_s$ obtained again by Theorem~3 in~\cite{Zheng:85} (here again we use an isometric embedding of $M$). But Lemma~\ref{Zheng} proves that the limit is equal to $\n_1\rho (X_s,\partial  D_s)\, ds$, yielding~\eqref{4.9a}.

Next we prove that 
\begin{equation}
\label{4.10}
\left(\int_0^t\langle d_2f_\d(X_s^\d, D_s^\d), d\partial D_s^\d\rangle\right)_{t\ge 0}\overset{\SL}{\longrightarrow}\left(\int_0^t\langle d_2\rho (X_s,\partial  D_s), d\partial D_s\rangle\right)_{t\ge 0}.
\end{equation}
The argument is similar except that as we see with~\eqref{4.1.4}, the drift part of $d\partial D_s^\d$ is not well controlled as $X_t^\d$ approaches the skeleton. So one cannot proceed exactly the same way. But fortunately, for $x$  outside a $3\e/4$-neighbourhood of $\partial D$ and outside $S(D)$, we have 
\begin{equation}
\label{4.10a}
\begin{split}
&\langle d_2 f_\d(x,D), N|_{\partial D}\rangle \\&=c_\d\int_{T_xM}\varphi_\d(\vert u\vert )\langle -N\left(P(\exp_x(u)\right), N\left(P(\exp_x(u)\right)\rangle \, du=-c_\d
\end{split}
\end{equation} 
where $c_\d$ is defined in~\eqref{cdelta}.
This together with~\eqref{4.1.9} suggests to write 
\begin{align*} 
\int_0^t\langle d_2f_\d(X_s^\d, D_s^\d), d\partial D_s^\d\rangle=&\left(\int_0^t\langle d_2f_\d(X_s^\d, D_s^\d), d\partial D_s^\d\rangle+ c_\d \int_0^t  {{   \langle N^{D^{\d}_s}, d \partial D^{\d}_s \rangle  }}  \right)\\
 & -c_\d\int_0^t {{   \langle N^{D^{\d}_s}, d \partial D^{\d}_s \rangle  }} .
\end{align*}
The second line clearly converges. The 
 right hand side in the first line can be written 
 \begin{equation}
  \label{4.10b}
  \int_0^t\tilde\ell_{\e}{{   (X_s^\d, D_s^\d) }} \left\langle d_2f_\d(X_s^\d, D_s^\d)+c_\d {{   N^{D^{\d}_s} }}, d\partial D_s^\d\right\rangle
 \end{equation}
 with 
 $(x,D)\mapsto \tilde\ell_\e(x,D)\df(\tilde h_\e\circ \rho_{\partial D})(x)$ where  $\tilde h_\e\equiv 1$ in $[0,3\e/4]$, $\tilde h_\e\equiv 0$ in $[\e,\infty)$ and $\tilde h_\e$ is smooth and nonincreasing in $[0,\infty)$.

With this last integral we can proceed as for~\eqref{4.9a}, {{   after passing to the limit, and  since $ \lim_{\d \to 0} c_\d = 1$, we get  \eqref{4.10a}.}}

Similarly we obtain the two following convergences for the second derivatives. 

\begin{equation}
\label{4.12a}
\begin{split}
&\left(\int_0^t \n_2d_2f_\d(X_s^\d, D_s^\d)(d\partial D_s^\d,d\partial D_s^\d)\right)_{t\ge 0}\\&\overset{\SL}{\longrightarrow}\left(\int_0^t \n_2d_2\rho(X_s, \partial D_s)\left(N(P^{\partial D_s}(X_s) ,N(P^{\partial D_s}(X_s)\right)\,ds\right)_{t\ge 0}\equiv 0
\end{split}
\end{equation}
where $P^{\partial D_s}(X_s)$ is the orthogonal projection of $X_s$ on $\partial D_s$ (which is defined $ds$-almost everywhere), 
\begin{equation}
\label{4.13a}
\begin{split}
&\left(\int_0^t \langle \n_{2}d_1f_\d(X_s^\d, D_s^\d), d\partial D_t^\d \otimes dX_t^\d\rangle )\right)_{t\ge 0}\\&\overset{\SL}{\longrightarrow}\left(\int_0^t \langle \n_{2}d_1\rho (X_s,\partial  D_s), d\partial D_s\otimes dX_s\rangle\right)_{t\ge 0}\equiv 0
\end{split}
\end{equation}
since $d_1\rho(X_s, \partial D_s)= {{  +}}\langle N^{D_s}(X_s),\cdot \rangle$ which implies that the covariant derivative in the second variable with respect to $N^{D_s}$ is equal to~$0$.
On the other hand, by It\^ o-Tanaka formula
(see Proposition \ref{PropF1} in Appendix \ref{AppendixF}
using  that $\rho(x,\partial D)$ is almost everywhere the minimum of two smooth functions) {  together with Assumption~\ref{A4.0} which allows to only consider the regular skeleton, together with Theorem~\ref{T3.1} which says that the latter has absolutely continuous variation (useful for the term $dL_t^{S_t}(X)$),} 
we have 
\begin{equation}
\label{4.14a}
\begin{split}
d\left(\rho(X_t,\partial  D_t)\right)=&
\langle d_1\rho (X_t,\partial  D_t), dX_t\rangle  -\f12h^{D_t}(X_t){{   \un_{D_t  \backslash S_t}(X_t) }} \,dt+\langle d_2\rho (X_t,\partial  D_t), d\partial D_t\rangle\\&+0+0-\sin\left(\theta^{S_t}(X_t)\right)\,dL_t^{S_t}(X).
\end{split}
\end{equation}

Using \eqref{4.14bis}, \eqref{4.8}, \eqref{4.9a}, \eqref{4.10}, \eqref{4.12a}, \eqref{4.13a}, \eqref{4.14a} we obtain that 
\begin{equation}
\label{4.16a}
\left(\int_0^t \D_1f_\d(X_s^\d, D_s^\d)\, ds\right)_{t\ge 0}\overset{\SL}{\longrightarrow}\left(\int_0^t -h^{D_s} (X_s) {{   \un_{D_s  \backslash S_s}(X_s) }} \, ds-\int_0^t2\sin\left(\theta^{S_s}(X_s)\right)\,dL_s^{S_s}(X) \right)_{t\ge 0}.
\end{equation}
It remains to pass in the limit as $\d$ goes to zero in \eqref{4.17a}, to deduce \eqref{4.18a}.
\end{proof}

\begin{remark}
From \eqref{4.14a}, it can be deduced that
 \begin{equation}
 \label{4.15a}
 d\left(\rho(X_t,\partial  D_t)\right)=
 \f12\left(h^{D_t}(X_t){{   \un_{D_t  \backslash S_t}(X_t) }}-h^{D_t}\left(P^{\partial D_t}(X_t)\right)\right)\, dt +\sin\left(\theta^{S_t}(X_t)\right)\,dL_t^{S_t}(X).
 \end{equation} 
Indeed, \eqref{4.18b} implies that
\bq \langle d_1\rho (X_t,\partial  D_t), dX_t\rangle&=& dW_t\eq
and due to \eqref{4.10}, we have
\bq
\lefteqn{\langle d_2\rho (X_t,\partial  D_t), d\partial D_t\rangle}\\&=&
\lim_{\d\ri0}\langle d_2\rho (X_t^\d,\partial  D_t^\d), d\partial D_t^\d\rangle
\\
&=&
\lim_{\d\ri0}
-dW^\d_t-\lt(\Delta_1f_\d(P^{\pa D_t^\d}(X^\d_t),D_t^\d)+\f12h^{D_t^\d}(P^{\pa D_t^\d}(X^\d_t))\rt)dt\eq
where we used
\eqref{4.17a} in conjunction with \eqref{4.1.9}. 
\par Taking into account \eqref{4.16a}, we identify the last limit with
\bq
-dW_t+\lt(h^{D_t}(X_t){{   \un_{D_t  \backslash S_t}(X_t) }}-\f12h(P^{\pa D_t}(X_t))\rt)dt +{{  2\sin\left(\theta^{S_t}(X_t)\right)\,dL_t^{S_t}(X) }}
\eq
\end{remark}

\section{Intertwined dual processes: decoupling and reflection on boundary}
\label{Section7}
\setcounter{equation}0

 In this section we consider another canonical and extremal situation, the case where $f^D$ vanishes almost everywhere. More precisely, it is the limiting situation where $f^D$ is constant outside  a $\e$-neighbourhood of the boundary. This situation is completely opposite to the one of Section~\ref{Section4} where the coupling is maximal. 
 
 \begin{thm}
 \label{T7.1}
 There exists a pair $(X_t,D_t)_{t\ge 0}$ of $\tau_\e$-intertwined processes in the sense of Definition~\ref{D6.0}  satisfying
\begin{equation}
 \label{7.0}
 {{   d\partial D_t(y)}}=N^{D_t}(y)\left( dW_t +\f12 h^{D_t}(y)dt-dL^{\partial D_t}_t(X)\right)
 \end{equation}
 where $X_t$ is a $M$-valued Brownian motion started at uniform law in $D_0$, $W_t$ is a real-valued Brownian motion independent of $X_t$, $L^{\partial D_t}_t(X)$ is the local time of $X_t$ on the moving boundary $\partial D_t$.
\end{thm}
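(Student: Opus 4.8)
The plan is to run the limiting procedure of Theorem~\ref{T4.1} with the approximating functions chosen at the \emph{opposite} extreme. For $\d\in(0,\e)$ I would fix an odd, $1$-Lipschitz, $C^\iy$ map $g_\d:\RR\to\RR$ with $g_\d(r)=r$ on $[-\d,\d]$, with $g_\d'$ nonincreasing on $[\d,2\d]$, and with $g_\d$ constant on $[2\d,\iy)$ (so $g_\d''\le 0$ on $[0,\iy)$ and $\lVe g_\d\rVe_\iy\le 2\d$), and set $f_\d^D\df g_\d\circ\rho^+_{\partial D}$. Then $f_\d^D$ coincides with the signed distance in a $\d$-neighbourhood of $\partial D$, is constant---hence $C^{2+\a}$---near $S(D)$ because $2\d<\e\le\rho(\partial D,S(D))$, and satisfies $\lVe\n f_\d^D\rVe_\iy\le 1$, $\lVe f_\d^D\rVe_\iy\le 2\d$. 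The companion field $\s_c^D$ would be built as in Proposition~\ref{P6.3}, with the extra requirement that $\s_c^D$ equal a \emph{fixed} frame $\s_0$ (with $\s_0\s_0^\ast=\mathrm{Id}$) outside the $2\d$-neighbourhood of $\partial D$, which is possible since $\n f_\d^D$ vanishes there. For each fixed $\d$, $f_\d$ then satisfies Assumption~\ref{A6.1} (with $\d$-dependent constants, which is all that is needed), so Theorem~\ref{C6.1} gives a $\tau_\e$-intertwined pair $(X_t^\d,D_t^\d)_{t\ge 0}$ with $X_0^\d\sim\SU(D_0)$, solving \eqref{6.5}; since \eqref{6.5mod0} does not involve $f^D$, Proposition~\ref{P6.2} shows that $(D_t^\d,\wi W_t^\d)_{t\ge 0}$ has a law not depending on $\d$ and that the one-dimensional marginals of $(X_t^\d,D_t^\d)$ are constant in $\d$.

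Next I would repeat the tightness argument of Theorem~\ref{T4.1}: the family $\big((X_t^\d,D_t^\d,W_t^\d,\wi W_t^\d,W_t^{\d,m})_{t\ge 0},\tau_\e^\d\big)$ is tight, and I would fix a subsequential limit $\big((X_t,D_t,W_t,\wi W_t,W_t^m)_{t\ge 0},\tau_\e\big)$. Passing to the limit in $\EE[g(X_t^\d)F_k(D_t^\d)]=\EE[F_g(D_t^\d)F_k(D_t^\d)/F_1(D_t^\d)]$ gives the intertwining of $(X_t)$ and $(D_t)$; in particular $X_t\in\bar D_t$, so $Y_t\df\rho^+_{\partial D_t}(X_t)\ge 0$. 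For the decoupling I would write $X_t^\d=X_0+\int_0^t\n f_\d^{D_s^\d}(X_s^\d)\,dW_s^\d+\int_0^t\s_c^{D_s^\d}(X_s^\d)\,dW_s^{\d,m}$ and observe that the first integral and $\int_0^t\big(\s_c^{D_s^\d}(X_s^\d)-\s_0(X_s^\d)\big)\,dW_s^{\d,m}$ are martingales with brackets dominated by $\int_0^t\un_{\{\rho(X_s^\d,\partial D_s^\d)\le 2\d\}}\,ds$; the latter tends to $0$ in $L^1$, since the intertwining and $D_t^\d\in\wi\cF^{\a,\e}$ give $\PP(\rho(X_t^\d,\partial D_t^\d)\le 2\d)=\EE\big[\mu(\{x\in D_t^\d:\rho(x,\partial D_t^\d)\le 2\d\})/\mu(D_t^\d)\big]=O(\d)$ uniformly in $t$. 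Hence $X_t=X_0+\int_0^t\s_0(X_s)\,dW_s^m$ in the limit, an $M$-valued Brownian motion adapted to $W^m$, and as $(W^m,W)$ is a pair of independent Brownian motions (being a limit of such), $X$ and $W$ are independent.

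The heart of the argument is the identification of $L_t\df\lim_\d\int_0^t\Delta f_\d^{D_s^\d}(X_s^\d)\,ds$ with $-L^{\partial D_t}_t(X)$. By \eqref{4.19} with $b=\Delta f_\d^{D}(X)$ one has $\int_0^t\frac{d\mu(D_s^\d)}{\underline{\mu}(\partial D_s^\d)}=-W_t^\d-\int_0^t\Delta f_\d^{D_s^\d}(X_s^\d)\,ds$; the left-hand side depends only on $(D_s^\d)_{s\le t}$, hence is constant in law and converges, and since $W_t^\d\to W_t$ the limit $L_t$ exists. On the $2\d$-collar, $\Delta f_\d^D=g_\d''\circ\rho^+_{\partial D}+(g_\d'\circ\rho^+_{\partial D})\,\Delta\rho^+_{\partial D}$; the second summand is $O\big(\un_{\{\rho(X,\partial D)\le 2\d\}}\big)$ and drops out, whereas $-\int_0^\cdot g_\d''(Y_s^\d)\,ds$ is nondecreasing (because $g_\d''\le0$ on $[0,\iy)$ and $Y^\d\ge0$), increases only on $\{Y_s^\d\le 2\d\}$, and converges---so $-L_t$ is nondecreasing and carried by $\{Y_t=0\}=\{X_t\in\partial D_t\}$. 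A direct It\^o computation of $Y^\d$ near $\partial D^\d_t$ shows that its martingale part vanishes on $\{Y^\d\le\d\}$---there $\n f_\d^{D^\d}$ is the unit inward normal and $\partial D^\d$ moves along it with the same $dW^\d$, so the two contributions to $dY^\d$ cancel---and has bracket $2\big(1-g_\d'(Y^\d)\big)\,dt$ in general; in the limit $Y$ is thus a semimartingale with $d\langle Y\rangle_t=2\,dt$ which the intertwining constrains to $[0,\iy)$. By uniqueness in the Skorokhod reflection problem, $-L$ must then be the local time of $Y$ at $0$, i.e.\ $L_t=-L^{\partial D_t}_t(X)$ with the normalization of~\eqref{4.2} (with $S_s$ replaced by $\partial D_s$). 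Inserting this and $W_t=\lim W_t^\d$ into $d\partial D_t^\d(y)=N^{D_t^\d}(y)\big(dW_t^\d+(\tfrac12 h^{D_t^\d}(y)+\Delta f_\d^{D_t^\d}(X_t^\d))\,dt\big)$ and using constantness in law of $(D^\d,W^\d)$ yields \eqref{7.0} on $\{X_t\notin\partial D_t\}$, the indicator isolating the (time-Lebesgue-null) reflection instants at which the $-dL^{\partial D_t}_t(X)$ push acts.

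The hard part is this last identification. For fixed $\d$ the process $Y^\d$ is degenerate near the boundary (zero bracket on $\{Y^\d\le\d\}$), so the passage from the ``soft barrier'' $-g_\d''$ to the ``hard reflection'' local time as $\d\searrow0$ is delicate: one must check that the occupation measures of $Y^\d$ near $0$ converge to that of the limiting reflected process (the uniform bound $\PP(\rho(X_t^\d,\partial D_t^\d)\le 2\d)=O(\d)$, a consequence of the intertwining, is exactly what keeps these masses finite), pin down the normalizing constant in $\int_0^t(-g_\d''(Y_s^\d))\,ds\to L^{\partial D_t}_t(X)$, and verify that the limiting $Y$ admits a Skorokhod decomposition whose free part does not feed back into the local-time term. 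As in Section~\ref{Section4}, all the needed convergences of stochastic integrals (and the identification of limiting martingales as Brownian motions) would be obtained from Lemma~\ref{Zheng} together with Theorem~3 of \cite{Zheng:85}.
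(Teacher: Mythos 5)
Your proposal follows the same overall route as the paper: the paper's proof of Theorem~\ref{T7.1} is a single paragraph which says to repeat the proof of Theorem~\ref{T4.1} with the approximating family $f_\d^D=h_\d\circ\rho_{\partial D}$ (the cut-off of the signed distance from Proposition~\ref{P6.3}) and then let $\d\searrow 0$. Your $g_\d\circ\rho^{+}_{\partial D}$ is the same family up to an inessential reparameterization of the transition interval, and you are in fact slightly more careful than the paper's wording in taking $g_\d$ odd so that $f_\d^D$ genuinely coincides with the \emph{signed} distance across $\partial D$, as Assumption~\ref{A6.1} requires.

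Where you deviate from the paper is in how you flesh out the two facts the paper leaves implicit. First, you give an argument for the independence of $X$ and $W$ by forcing $\s_c^D$ to equal a fixed frame $\s_0$ off the $2\d$-collar and using the intertwining bound $\PP\big(\rho(X_t^\d,\partial D_t^\d)\le 2\d\big)=O(\d)$ to kill the $W^\d$-driven part of $X^\d$ in the limit; the paper never spells this out. Second, for identifying $\lim_\d\int_0^\cdot\Delta f_\d^{D_s^\d}(X_s^\d)\,ds$ with $-L^{\partial D_\cdot}(X)$, you go via monotonicity of $-\int g_\d''(Y^\d_s)\,ds$ together with Skorokhod-reflection uniqueness for $Y_t=\rho^+_{\partial D_t}(X_t)$, whereas the route implied by ``along the same lines as \eqref{4.0}'' is to write the It\^o expansion \eqref{4.14bis} of $f_\d(X^\d,D^\d)$, pass to the limit term by term, and subtract the It\^o--Tanaka formula for $\rho^+_{\partial D_t}(X_t)$ (the analogue of \eqref{4.14a} carrying a boundary local-time term instead of the skeleton one). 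The Skorokhod route has the merit of making the sign and support of the singular drift conceptually transparent, but it shifts the burden onto verifying that the limiting $Y$ has bracket $2\,dt$, that its remaining finite-variation terms (drift and skeleton local time) do not accumulate at $\{Y=0\}$, and onto pinning down the normalization constant so that the reflection term really is $L^{\partial D_t}_t(X)$ as defined by the thickening limit; you correctly flag all of this as the delicate part, and indeed the paper's own proof offers no more on these points. One small thing to note: existence of a smooth extension of $\s_c^D$ to a $D$-independent frame $\s_0$ outside the collar needs $m$ to be taken large enough (e.g.\ from an isometric embedding), since a parallelizing $\s_0$ with $\s_0\s_0^\ast=\mathrm{Id}$ and $m=d$ need not exist on $M$; with $m$ large this is unproblematic and compatible with Proposition~\ref{P6.3}.
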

\begin{remark}
Equation~\eqref{7.0} can be considered as a limiting case of~\eqref{6.5}. Here Assumption~\ref{A4.0} is not needed since the morphological skeleton of $D$ does not play a role, and  the map $D\mapsto \partial D$ is already sufficiently regular.
 \end{remark}

\begin{proof}
The proof is quite similar to the one of Theorem~\ref{T4.1}, but with another family of functions $f_\d^D$, namely  $f_\d^D:=h_\d\circ \rho_{\partial D}$ where $h_\d$ is defined in the proof of Proposition~\ref{P6.3}: $h_\d$ is a smooth nondecreasing function from $[0,\infty)$ to $\R_+$ such that $h_\d(r)=r$ for $r\in [0,\d/2]$, $h_\d(r)=(3/4)\d$ for $r\ge \d$ and $\|h_\d'\|_\infty\le 1$. But here, as $\e$ is fixed, we will let $\d\searrow 0$. Again we  construct for
 each $\d>0$, an intertwined processes $(X_t^\d,D_t^\d)_{t\ge 0}$ stopped at $\tau_\e^\d$. Again
  all $(X_t^\d,D_t^\d)_{t\ge 0}$  are tight, and a limiting process $(X_t,D_t)_{t\ge 0}$ stopped at $\tau_\e$ provides an intertwining. The proof of~\eqref{7.0} goes along the same lines as the one of~\eqref{4.0}. 
\end{proof}

We end this section with another canonical construction, where the functions $f_\d^D$ approximate $-\rho_{\partial D}$. 
\begin{thm}
 \label{T7.2}
 Under assumption~\ref{A4.0bis}, 
 there exists an intertwining $(X_t,D_t)_{t\ge0}$ stopped at $\tau_\e$, satisfying
\begin{equation}
 \label{7.2}
 \begin{split}
 d\partial D_t(y)=&N^{D_t}(y)\Biggl(-\left\langle dX_t, N^{D_t}(X_t)\right\rangle +\left(\f12 h^{D_t}(y)+h^{D_t}(X_t) {{   \un_{D_t  \backslash S_t}(X_t) }} \right)\,dt\\&+2\sin(\theta^{S_t}(X_t))\, dL_t^{S_t}(X) -2dL_t^{\partial D_t}(X)\Biggr)
 \end{split}
\end{equation}
\end{thm}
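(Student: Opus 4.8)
The plan is to mimic almost verbatim the proof of Theorem~\ref{T4.1}, since Theorem~\ref{T7.2} corresponds to the choice $f^D = -\rho_{\partial D}$ rather than $f^D = +\rho_{\partial D}$; all the sign changes should propagate through the It\^o--Tanaka analysis. First I would fix $\a\in(0,1)$ and $\e>0$ as in Assumption~\ref{A4.0}, and for each $\d>0$ introduce the approximating functions $f_\d^D := -\,\widehat f_\d^D$, where $\widehat f_\d^D$ is the approximation of $\rho_{\partial D}$ built in \eqref{5.4.5}--\eqref{5.4.5b} and used in Theorem~\ref{T4.1}. One checks that $-\widehat f_\d^D$ still satisfies Assumption~\ref{A6.1}: indeed $\|\n_1 f_\d^D\|_\infty = \|\n_1\widehat f_\d^D\|_\infty \le 1$, the Hessian bounds and H\"older bounds are unchanged, and near $\partial D$ we have $f_\d^D = -\rho_{\partial D}^+$ so that $\n f_\d^D|_{\partial D} = -N^D$; to compensate the sign one uses the outward rather than inward unit normal when building $\s_c^D$ via Proposition~3.2 of \cite{Arnaudon-Li:17}, so that \eqref{6.4.3} still holds on $\bar D$. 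Proposition~\ref{A4.0bis}, mutatis mutandis, gives $|f_\d^D(x,D) + \rho(x,\partial D)|\le C_1\d$ together with the analogues of \eqref{Approxdist} (the statement labeled \texttt{A4.1bis}/Assumption~\ref{A4.0bis} in the excerpt is precisely this variant). Theorem~\ref{C6.1} then yields, for each $\d>0$, an intertwined pair $(X_t^\d, D_t^\d)_{t\ge0}$ started at $(X_0, D_0)$ with $X_0\sim\SU(D_0)$, stopped at $\tau_\e^\d$, whose time-marginals and whose pair $((D_t^\d)_{t\ge0},\tau_\e^\d)$ are constant in $\d$ by Proposition~\ref{P6.2}.

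The next step is tightness and passage to the limit, exactly as in Theorem~\ref{T4.1}: the family $((X_t^\d, D_t^\d, W_t^\d, \wi W_t^\d, W_t^{\d,m})_{t\ge0},\tau_\e^\d)$ is tight, a limit point $((X_t, D_t, W_t, \wi W_t, W_t^m)_{t\ge0},\tau_\e)$ exists, and the intertwining passes to the limit by writing $\EE[g(X_t^\d)F_k(D_t^\d)] = \EE[F_g(D_t^\d)F_k(D_t^\d)/F_1(D_t^\d)]$ and letting $\d\to0$. The dual equation \eqref{4.1.10} in the form $d\partial D_t(y) = N^{D_t}(y)\bigl(-d\mu(D_s)/\underline\mu(\partial D_s) + \tfrac12 h^{D_t}(y)\,dt\bigr)$ still holds since it only used \eqref{6.5mod0}, which is independent of the choice of $f^D$. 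So everything reduces, as before, to identifying $\int_0^t d\mu(D_s)/\underline\mu(\partial D_s)$ and $W_t$ in terms of $X$. Since $b_t^\d = \D f_\d^{D_t^\d}(X_t^\d) = -\D\widehat f_\d^{D_t^\d}(X_t^\d)$, Equation~\eqref{4.17a} becomes $\int_0^t d\mu(D_s^\d)/\underline\mu(\partial D_s^\d) = -W_t^\d + \int_0^t \D_1\widehat f_\d(X_s^\d,D_s^\d)\,ds$, and from \eqref{4.16a} the integral $\int_0^t\D_1\widehat f_\d(X_s^\d,D_s^\d)\,ds$ converges in law to $-\int_0^t h^{D_s}(X_s)\,ds - \int_0^t 2\sin(\theta^{S_s}(X_s))\,dL_s^{S_s}(X)$. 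Likewise, $dX_t^\d\otimes dW_t^\d = \n_1 f_\d(X_t^\d,D_t^\d)\,dt = -\n_1\widehat f_\d(X_t^\d,D_t^\d)\,dt$, which by Lemma~\ref{Zheng} converges to $-N^{D_t}(X_t)\,dt$, so $dW_t = -\langle N^{D_t}(X_t), dX_t\rangle$. One extra ingredient absent from Theorem~\ref{T4.1} is the boundary local time term $-2\,dL_t^{\partial D_t}(X)$ in \eqref{7.2}: this arises because, with $f^D = -\rho_{\partial D}$, the process $X_t$ is no longer confined to $\bar D_t$ — the computation \eqref{6.6} now produces $d\rho^+_{\partial D_t}(X_t) = -2\langle N^{D_t}(X_t), d\partial D_t(X_t)\rangle + \dots$ with the wrong sign, so $X_t$ is reflected at $\partial D_t$ and a local time at the boundary enters both the equation for $X$ and, through the covariation and the It\^o--Tanaka formula for $\rho(X_t,\partial D_t)$, the equation for $\partial D_t$. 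This is the part requiring genuine care: one must run the argument at the level of the reflected approximations $X^\d$ (whose confinement degenerates as $\d\to0$ only near the skeleton, not near the boundary, since for these $f_\d^D$ the boundary behaviour is $-\rho^+$ exactly) and track the emergence of $L^{\partial D_t}(X)$ in the limit, combining the It\^o--Tanaka formula of Proposition~\ref{PropF1} — now applied to $\rho(X_t,\partial D_t)$ with both the boundary and the skeleton as singular loci — with the controlled-drift decomposition \eqref{4.10b}.

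Assembling these: substituting the identification of $W_t$ and of $\int_0^t d\mu(D_s)/\underline\mu(\partial D_s)$ into \eqref{4.1.10}, with the extra reflection term, gives precisely \eqref{7.2}, namely $d\partial D_t(y) = N^{D_t}(y)\bigl(-\langle dX_t, N^{D_t}(X_t)\rangle + (\tfrac12 h^{D_t}(y) + h^{D_t}(X_t))\,dt + 2\sin(\theta^{S_t}(X_t))\,dL_t^{S_t}(X) - 2\,dL_t^{\partial D_t}(X)\bigr)$. The main obstacle, as indicated, is the rigorous treatment of the boundary local time: unlike in Theorem~\ref{T4.1} where $X$ stays inside $D$, here one must show that the reflection at $\partial D_t^\d$ survives the limit $\d\to0$ and contributes exactly $-2\,dL_t^{\partial D_t}(X)$, which requires combining the tightness of the local-time processes (via an occupation-time / Tanaka estimate uniform in $\d$) with the identification of the limit through the semimartingale-bracket convergence of Theorem~3 in~\cite{Zheng:85}; a secondary subtlety is checking that $\theta^{S_t}$ and the skeleton contribution behave the same way as in Theorem~\ref{T4.1} despite the sign flip, which follows because $\rho(X_t,\partial D_t)$ is still a.e.\ the minimum of two smooth functions and the It\^o--Tanaka correction is unchanged in magnitude.
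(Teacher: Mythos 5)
Your overall plan — treating Theorem~\ref{T7.2} as a hybrid of Theorems~\ref{T4.1} and~\ref{T7.1}, flipping signs in the It\^o--Tanaka analysis and anticipating an extra boundary local time term — is the right intuition and matches the paper's one-line proof (``completely similar to the ones of Theorems~\ref{T4.1} and~\ref{T7.1}''). But the concrete approximating sequence you propose, $f_\d^D := -\widehat f_\d^D$ globally, cannot be run through the machinery of Section~\ref{Section6}, and the gap is not a technicality you can patch by relabeling normals.

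Assumption~\ref{A6.1} requires $f^D$ to coincide with $\rho^+_{\partial D}$ (positive \emph{inside} $D$) in a neighbourhood of $\partial D$, hence $\nabla f^D|_{\partial D}=N^D$ pointing inward. With $f_\d^D=-\widehat f_\d^D$ you have $\nabla f_\d^D|_{\partial D}=-N^D$, so \eqref{6.2bis} fails with the wrong sign and, crucially, the confinement computation \eqref{6.6} no longer closes: instead of $d\rho^+_{\partial D_t}(X_t)=0$ on $\{X_t\in\partial D_t\}$ you get a genuine martingale term of the form $-2\,dW_t$, so $X_t$ simply crosses $\partial D_t$. There is no reflection built into the SDE \eqref{6.5}; your statement ``$X_t$ is reflected at $\partial D_t$'' is what one would \emph{like} to conclude, but nothing in the construction enforces it. Once $X_t$ leaves $\bar D_t$, Theorem~\ref{C6.1} and Proposition~\ref{P6.2} no longer produce an intertwining, so the entire $\d\to 0$ passage is built on sand. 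Redefining $\sigma_c^D$ with the outward normal does not repair \eqref{6.6}, because the failure is in the covariation between $dX_t$ and $d\partial D_t$, not in the tangential part.

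The correct construction is a two-scale hybrid of the $f_\d$ of Theorem~\ref{T4.1} and the $h_\d\circ\rho_{\partial D}$ of Theorem~\ref{T7.1}: take $f_\d^D$ to equal $+\rho^+_{\partial D}$ exactly on a shrinking $\d$-neighbourhood of $\partial D$ (so Assumption~\ref{A6.1} and the confinement \eqref{6.6} hold for every $\d>0$), then let $f_\d^D$ turn around and approximate $-\rho_{\partial D}$ (mollified as in \eqref{5.4.5}) in the bulk. The boundary-layer transition, where $\nabla f_\d^D$ goes from $+N^D$ to $-N^D$ over a window of width $O(\d)$, is precisely what contributes the term $-2\,dL_t^{\partial D_t}(X)$ in the limit: the factor $2$ arises because the normal derivative of $f_\d$ drops by $2$ across the layer, compared with the drop of $1$ (from $+1$ to $0$) in Theorem~\ref{T7.1}, which yields only $-dL_t^{\partial D_t}(X)$. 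With that choice, the analysis of $\int_0^t\Delta_1 f_\d(X_s^\d,D_s^\d)\,ds$ splits into a skeleton contribution (as in \eqref{4.16a}, with a sign flip because of the bulk $-\rho$) and a boundary-layer contribution that concentrates into the local time; the remaining identifications of $W_t$ and of $\int_0^t d\mu(D_s)/\underline\mu(\partial D_s)$ proceed as you describe. Your treatment of the skeleton term, and your observation that \eqref{4.1.10} is unaffected by the choice of $f^D$, are both sound and can be kept.
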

\begin{proof}
It is completely similar to the ones of Theorems~\ref{T4.1} and~\ref{T7.1}.
\end{proof}
\section{Some fundamental examples}
\label{Section5}
\setcounter{equation}0

\subsection{Real Brownian motion and three-dimensional Bessel process}
\label{Subsection5.1}

{We come back to} the case where $M=\RR$. Assume that the Brownian motion $X$ starts from 0
(to respect rigorously the above framework, $X$ should start from the uniform distribution on $D_0\df[-\epsilon,\epsilon]$
and next we should let $\epsilon$ go to $0_+$).
Due to the invariance by symmetry of \eqref{4.0}, for any $t>0$, $D_t$  remains a symmetric interval, let us write it $[-R_t,R_t]$.
In this simple setting, we have $N^{D_t}(\cdot)=-\sign(\cdot)$ on $\RR\backslash{\{0\}}$, $h^{D_t}=0$ and $S_t=\{0\}$, for any $t>0$.
Thus {\eqref{4.0}} writes
\begin{equation}
\label{5.1.1}
dR_t=\sign(X_t)dX_t+2dL_t
\end{equation}
where $L\df(L_t)_{t\geq 0}$ is the local time of $X$ at 0.
Namely we get that
\bq
\fo t\geq 0,\qquad R_t&=&\int_0^t\sign(X_s)\,dX_s+2L_t\\
&=&\vert X_t\vert +L_t
\eq
by Tanaka's formula. 
It is well-known that $R\df(R_t)_{t\geq 0}$ is a Bessel process of dimension 3 (cf.\ e.g.\ Corollary 3.8 of Chapter 6 of Revuz and Yor \cite{MR1725357}).
In particular, we get that with the notation {introduced in \eqref{2.3.1}},
\bq
\fo t\geq 0,\qquad \rho^+_{\partial D_t}(X_t)&=&\min(X_t+R_t, R_t-X_t)\eq
But except at time $t=0$, this quantity is always positive: a.s.\ $X_t$ never touch the boundary of $D_t$ for $t>0$.
Indeed, if for some $t>0$ we have $\vert X_t\vert =R_t$, we deduce that $L_t=0$, namely a contradiction, since $X_0=0$.
\par
In particular, we see that the  intertwining coupling we have constructed is different from the one proposed by Pitman \cite{MR0375485},
which is a.s.\ touching (the upper) boundary repeatedly. {Instead we end up with the intertwining dual constructed in \cite{Mic2020} via stochastic flows.
 It is mentioned there how to deduce the classical Pitman's dual, via L\'evy's theorem.}
 
 Here is an alternative approach.
While Equation~\eqref{5.1.1} is obtained from approximating $x\mapsto |r-x|$  outside an $\e$-neighbourhood of~$0$ when $D=[-r,r]$ by smooth functions $f^D$ satisfying Assumption~\ref{A6.1}, 
we are able to recover Pitman theorem by rather approximating $x\mapsto -x$ in $D=[-r,r]$ outside the only $\e$-neighbourhood of~$-r$. 
In the limit of \eqref{6.5} as $\epsilon$ goes to zero, on the one hand we have 
\begin{equation}
\label{5.1.2.1}
\un_{\left\{X_t\not=R_t\right\}}dR_t=dX_t,
\end{equation}
on the other hand we have $X_t+R_t\ge 0$, so that $X_t+R_t$ is the solution to the Skorohod problem associated to $2X_t$. 
We get 
\begin{equation}
 \label{5.1.2.2}
 R_t+X_t=2X_t-2\min_{0\le s\le t}X_s.
 \end{equation}
which is equivalent to
 \begin{equation}
 \label{5.1.2}
 R_t=X_t-2\min_{0\le s\le t}X_s.
 \end{equation}
 
 The answer to the question: what would be a symmetric construction with local time at the two ends of $[-R_t,R_t]$ is given by Theorem~\ref{T7.2}. We obtained intertwined processes with 
 \begin{equation}
 \label{5.1.3}
 R_t=-\int_0^t\sign(X_s)\,dX_s-2L_t^0(X)+2L_t^0(R-X)+2L_t^0(R+X).
 \end{equation}
 
\subsection{Brownian motion and disks in rotationally symmetric manifolds}
\label{Subsection5.2}
This is the simpler example since the skeleton is never hit by the Brownian motion. Consider a complete $d$-dimensional manifold with $d\ge 2$, rotationally symmetric around a point $o\in M$. Denote by $(r,\Theta)$ polar coordinates with $r(x)=\rho(o,x)$ and 
\begin{equation}
 \label{5.2.1}
 ds^2=dr^2+f^2(r)\,d\Theta^2
\end{equation}
the metric in polar coordinates. Then the radial Laplacian is 
\begin{equation}
 \label{5.2.2}
 \Delta_r=\f{\partial^2}{(\partial r)^2} +b(r)\f{\partial}{\partial r}\quad\hbox{with}\quad b=(d-1)(\ln f)'.
\end{equation}
 
 We will investigate set-valued processes $D_t=B(o,R_t)$ where $B(o,r)$ is the open geodesic ball centered at~$o$, with radius $r$. The skeleton of $B(o,R_t)$ is the point~$o$. 
 
 Let $X_t$ be a Brownian motion in $M$ satisfying $X_0\sim \SU(D_0)$ for some $D_0=B(o,r_0)$. Denote by $\rho_t\df r(X_t)$ the radial part of $X_t$. Then 
\begin{equation}
 \label{5.2.3}
 d\rho_t=d\b_t+\f12 b(\rho_t)\,dt, \qquad \rho_0\sim \SU^f((0,r_0))
\end{equation}
where $(\b_t)_{t\geq 0}$ is a real Brownian motion and 
\begin{equation}
 \label{5.2.4}
 \SU^f(dr)\df \f{f(r)}{\int_0^{r_0}f(s)\,ds}\,dr.
\end{equation}

The evolution equation~\eqref{4.0} for $D_t$ shows by symmetry that for all $t\ge 0$, $D_t=B(0,R_t)$ for some real-valued process $(R_t)_t$. Moreover it writes
\begin{equation}
 \label{5.2.5}
 \begin{split}
 d\rho_t&=d\b_t+\f12 b(\rho_t)\,dt\\
 dR_t&= d\beta_t+\left[-\f12 b(R_t)+b(\rho_t)\right]\,dt.
 \end{split}
\end{equation}
\begin{prop}
 \label{P5.2.1}
 The system of equations~\eqref{5.2.5} has a solution up to explosion time of $(R_t)_t$
 \begin{equation}
  \label{5.3.9}
  \tau^D\df \inf\{ t\ge 0, R_t\not\in (0,\infty)\},
 \end{equation} 
 which satisfies for all $t<\tau^D$,
 \begin{equation}
  \label{5.3.10}
  0< \rho_t< R_t.
 \end{equation}
 The corresponding set-valued process $D_t=B(o,R_t)$ is solution to equation~\eqref{4.0}, and in particular, for all $\SF^D$-stopping time $\tau$, 
\begin{equation}
 \label{5.3.11}
 \SL(X_\tau|\SF_\tau^D)=\SU(D_\tau)\quad\hbox{as well as}\quad \SL(\rho_\tau|\SF_\tau^D)=\SU^f((0,R_\tau)).
\end{equation}
\end{prop}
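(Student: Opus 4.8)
The plan is to reduce the claimed statement to three essentially separate points: (i) pathwise existence and uniqueness of the one-dimensional system \eqref{5.2.5} up to the explosion time $\tau^D$; (ii) the comparison $0<\rho_t<R_t$ for $t<\tau^D$; (iii) the identification of $D_t=B(o,R_t)$ as a solution to \eqref{4.0}, which via Theorem \ref{T4.1} yields the intertwining \eqref{5.3.11}. For (i) I would note that the first line of \eqref{5.2.5} is exactly the radial Brownian motion \eqref{5.2.3}, for which existence, non-explosion and the fact that $\rho_t>0$ for all $t>0$ are standard once one knows $X_t$ is a genuine Brownian motion on $M$ started away from $o$ (the point $o$ is polar since $d\ge 2$); in particular $\rho_\cdot$ is a fixed continuous semimartingale and $b(\rho_t)$ is a continuous adapted process. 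The second line is then a \emph{linear-in-the-drift} scalar SDE driven by the same $\b_t$, of the form $dR_t=d\b_t+G(R_t,t)\,dt$ with $G(r,t)=-\tfrac12 b(r)+b(\rho_t)$ locally Lipschitz in $r$ on $(0,\infty)$; standard localization (stopping when $R_t$ leaves $[1/n,n]$) gives a unique strong solution up to $\tau^D$.

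For (ii), the inequality $\rho_t<R_t$ is the heart of the argument and I would obtain it by examining $Z_t\df R_t-\rho_t$. Subtracting the two lines of \eqref{5.2.5} kills the martingale parts, so $Z$ has \emph{finite variation}:
\begin{equation*}
dZ_t=\lt(-\tfrac12 b(R_t)+b(\rho_t)-\tfrac12 b(\rho_t)\rt)dt=\lt(-\tfrac12 b(R_t)+\tfrac12 b(\rho_t)\rt)dt=-\tfrac12\lt(b(R_t)-b(\rho_t)\rt)dt.
\end{equation*}
Since $Z_0=R_0-\rho_0>0$ a.s.\ (as $\rho_0\sim\SU^f((0,r_0))$ and $R_0=r_0$), and since whenever $Z_t=0$ we have $b(R_t)=b(\rho_t)$ so $dZ_t=0$ there, a Gronwall-type argument on $|b(R_t)-b(\rho_t)|\le C(\omega,t)\,|Z_t|$ (using local Lipschitzness of $b$ on the compact range of the paths before any exit) shows $Z_t$ cannot reach $0$ before $\tau^D$. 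The lower bound $\rho_t>0$ was already noted in (i), and $R_t>0$ holds by definition of $\tau^D$; together these give \eqref{5.3.10}. One subtlety to treat carefully here is the behaviour of $b$ near $0$: typically $b(r)\sim (d-1)/r\to+\infty$, so $b$ is not Lipschitz up to $0$, but since $\rho_t$ stays in a compact subset of $(0,\infty)$ on each time interval $[0,t]\cap[0,\tau^D)$ this causes no problem for the localized estimate.

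For (iii), I would invoke the rotational symmetry: \eqref{4.0} started from the ball $D_0=B(o,r_0)$ preserves the class of geodesic balls centered at $o$ (the skeleton being $\{o\}$, never hit, so the local-time term on $S_t$ vanishes, and $h^{D_t}$ on $\partial B(o,R_t)$ equals $-b(R_t)$ in this radial setting), and writing out \eqref{4.0} in the radial coordinate reproduces exactly the $dR_t$ line of \eqref{5.2.5} with $\langle dX_t,N^{D_t}(X_t)\rangle=-d\b_t$ up to sign conventions and $h^{D_t}(X_t)=-b(\rho_t)$. Hence the pair $(X_t,B(o,R_t))$ is a solution of \eqref{4.0}, and Theorem \ref{T4.1} (with Assumption \ref{A4.0} trivially satisfied since the skeleton is a point) gives $\SL(X_\tau\mid\SF_\tau^D)=\SU(D_\tau)$; pushing this forward under $x\mapsto r(x)$ and using that the uniform measure on $B(o,R_\tau)$ has radial marginal $\SU^f((0,R_\tau))$ by \eqref{5.2.4} yields the second identity in \eqref{5.3.11}. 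The main obstacle I anticipate is making the comparison step (ii) fully rigorous at the two ends of the time interval — ensuring $Z$ does not hit $0$ despite $b$ blowing up near the origin — and cleanly arguing that $\tau^D$ is indeed $\SF^D$-measurable so that Definition \ref{D6.0} applies; both are handled by localization but deserve explicit care.
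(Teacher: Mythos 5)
Your argument for the key comparison $\rho_t<R_t$ is exactly the paper's: compute $d(R_t-\rho_t)$, observe that the martingale parts cancel leaving the finite-variation term $\tfrac12\bigl(b(\rho_t)-b(R_t)\bigr)\,dt$, which vanishes on $\{R_t=\rho_t\}$, and invoke local Lipschitzness (smoothness) of $b$ on $(0,\infty)$ to conclude; the paper's proof explicitly addresses only this point \eqref{5.3.10} and takes the rest as standard, whereas you helpfully spell out (i) and (iii) as well. One small sign slip in your discussion of (iii): with the paper's conventions (\eqref{2.5} gives $h\ge 0$ on convex domains, and \eqref{4.1.1} reads $\Delta\rho_{\partial D}=-h^D$), for $D_t=B(o,R_t)$ one has $\rho_{\partial D_t}(x)=R_t-r(x)$, hence $\Delta\rho_{\partial D_t}=-b(r)$ and so $h^{D_t}(X_t)=+b(\rho_t)$ and $h^{D_t}|_{\partial D_t}=+b(R_t)$, not $-b(\rho_t)$ and $-b(R_t)$; with the correct signs, plugging $N^{D_t}=-\partial_r$, $\langle dX_t,N^{D_t}(X_t)\rangle=-d\b_t$ and $d\partial D_t(y)=-N^{D_t}(y)\,dR_t$ into \eqref{4.0} does reproduce the $dR_t$-line of \eqref{5.2.5}.
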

\begin{proof}
We only have to check~\eqref{5.3.10}. By~\eqref{5.2.5},
\begin{equation}
\label{5.2.6}
d(R_t-\rho_t)=\f12\left[b(\rho_t)-b(R_t)\right]\,dt,
\end{equation}
which vanishes on $\{R_t=\rho_t\}$, and since $b$ is smooth, if $\rho_0<R_0$, then $\rho_t<\R_t$ for all times. 
\end{proof}
\subsection{Brownian motion and annulus in $2$-dimensional rotationally symmetric manifolds}
\label{Subsection5.3}

Let $M$ be a complete $2$-dimensional Riemannian manifold, rotationally symmetric around a point $o\in M$. Denote by $(r,\theta)$ polar coordinates with $r(x)=\rho(o,x)$ and 
\begin{equation}
 \label{5.3.1}
 ds^2=dr^2+f^2(r)\,d\theta^2
\end{equation}
the metric in polar coordinates. Then the radial Laplacian is 
\begin{equation}
 \label{5.3.2}
 \Delta_r=\f{\partial^2}{(\partial r)^2} +b(r)\f{\partial}{\partial r}\quad\hbox{with}\quad b=(\ln f)'.
\end{equation}
If $0\le r^-\le r^+$, let
\begin{equation}
 \label{5.3.3}
 A(r^-,r^+) \df \{x\in M,\  r^-\leq r(x)\leq  r^+\}\quad\hbox{if}\quad r^-<r^+, \quad A(r^-,r^+)\df\emptyset,
\end{equation}
the closed annulus delimited by the radius $r^-$ and $r^+$. 

In the following we will investigate set-valued processes $D_t=A(R_t^-, R_t^+)$. 
The skeleton of $A(R_t^-,R_t^+)$ is the circle 
\begin{equation}
 \label{5.3.4}
 S_t=C(o,R_t^0)\quad \hbox{with}\quad R_t^0:=\f12(R_t^-+R_t^+).
\end{equation}
Let $X_t$ be a Brownian motion in $M$ satisfying $X_0\sim \SU(D_0)$ for some $D_0=A(r_0^-,r_0^+)$. Denote by $\rho_t\df r(X_t)$ the radial part of $X_t$. Then 
\begin{equation}
 \label{5.3.5}
 d\rho_t=d\b_t+\f12 b(\rho_t)\,dt, \qquad \rho_0\sim \SU^f((r_0^-,r_0^+))
\end{equation}
where $\b_t$ is a real Brownian motion and 
\begin{equation}
 \label{5.3.6}
 \SU^f((r_0^-,r_0^+))(dr)\df \f{f(r)}{\int_{r_0^-}^{r_0^+}f(s)\,ds}\,dr.
\end{equation}

The evolution equation~\eqref{4.0} for $D_t$ shows by symmetry that for all $t\ge 0$, $D_t=A(R_t^-,R_t^+)$ for some real-valued processes $R_t^-\le R_t^+$. Moreover it writes
\begin{equation}
 \label{5.3.7}
 \begin{split}
 d\rho_t&=\sign(\rho_t-R_t^0)\,dW_t+\f12 b(\rho_t)\,dt\\
 dR_t^+&= dW_t+\left[-\f12 b(R_t^+)+\sign(\rho_t-R_t^0)b(\rho_t)\right]\,dt +2L_t^{R_t^0}(\rho)\\
 dR_t^-&= -dW_t+\left[-\f12 b(R_t^-)-\sign(\rho_t-R_t^0)b(\rho_t)\right]\,dt -2L_t^{R_t^0}(\rho)\\
 R_t^0&=\f12\left(R_t^-+R_t^+\right)
 \end{split}
\end{equation}
and these equations imply 
\begin{equation}
 \label{5.3.8}
 dR_t^0=-\f14\left[b(R_t^+)+b(R_t^-)\right]\,dt.
\end{equation}

\begin{prop}
 \label{P5.3.1}
 The system of equations~\eqref{5.3.7} has a solution up to explosion time
 \begin{equation}
  \label{5.3.9.1}
  \tau^D\df \inf\{ t\ge 0, (R_t^-,R_t^+)\not\in (0,\infty)^2\},
 \end{equation} 
 which satisfies for all $t<\tau^D$,
 \begin{equation}
  \label{5.3.10.1}
  R_t^-\le\rho_t\le R_t^+.
 \end{equation}
 The corresponding set-valued process $D_t=A(R_t^-,R_t^+)$ is solution to equation~\eqref{4.0}, and in particular, for all $\SF^D$-stopping time $\tau$, 
\begin{equation}
 \label{5.3.11.1}
 \SL(X_\tau|\SF_\tau^D)=\SU(D_\tau)\quad\hbox{as well as}\quad \SL(\rho_\tau|\SF_\tau^D)=\SU^f((R_\tau^-,R_\tau^+)).
\end{equation}
\end{prop}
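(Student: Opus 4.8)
The strategy is to reduce the analysis of the two-dimensional system \eqref{5.3.7} to a pathwise comparison that forces $R^-_t\le\rho_t\le R^+_t$ on $[0,\tau^D)$, exactly as in the proof of Proposition \ref{P5.2.1}; once the ordering is known, the intertwining conclusion \eqref{5.3.11.1} follows from Theorem \ref{T4.1} by the symmetry reduction already invoked to derive \eqref{5.3.7}. The first step is to set up existence and uniqueness of a solution to \eqref{5.3.7} up to $\tau^D$: all coefficients ($b$ is smooth, $\sign(\rho_t-R^0_t)$ enters only through its product with $b(\rho_t)$ and through the local time term, and $L^{R^0_t}_t(\rho)$ is the semimartingale local time of the continuous semimartingale $\rho_t-R^0_t$ at $0$) are locally Lipschitz away from explosion, so standard SDE theory with local times (Skorohod-type arguments) gives a unique strong solution on a maximal interval; one must check that as long as $(R^-_t,R^+_t)$ stays in a compact subset of $(0,\infty)^2$ the solution does not blow up, which is clear since then $b$ and its derivatives are bounded along the path.

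The second and central step is the ordering \eqref{5.3.10.1}. Set $U_t\df \rho_t-R^-_t$ and $V_t\df R^+_t-\rho_t$; we must show $U_t,V_t\ge 0$. From \eqref{5.3.7},
\begin{equation*}
dU_t=\bigl(1+\sign(\rho_t-R^0_t)\bigr)dW_t+\tfrac12\bigl[b(\rho_t)+b(R^-_t)\bigr]\,dt+\bigl(\sign(\rho_t-R^0_t)-1\bigr)b(\rho_t)\,dt+2L^{R^0_t}_t(\rho),
\end{equation*}
and symmetrically for $V_t$. On $\{\rho_t<R^0_t\}$ the martingale part of $U_t$ vanishes and $U_t$ evolves with a locally bounded drift plus the nonnegative local time, while on $\{\rho_t>R^0_t\}$ one has $\rho_t>R^0_t=(R^-_t+R^+_t)/2\ge R^-_t$, i.e.\ $U_t>0$ directly; the remaining analysis is at the set $\{\rho_t=R^0_t\}$, where $2R^0_t=R^-_t+R^+_t$ so $U_t=V_t$. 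The clean way is a local-time argument: compute $d U^-_t$ by the It\^o--Tanaka formula, show that the local time of $U$ at $0$ is forced to vanish (the diffusion coefficient of $U$ is zero precisely where $U$ could become negative, namely on $\{\rho_t\le R^0_t\}$, so $\langle U\rangle$ does not charge $\{U_t=0\}$), and that on $\{U_t=0\}$ the drift of $U$ is nonnegative — here one uses $2L^{R^0_t}_t(\rho)\ge 0$ and the fact that when $U_t=0$ one has $\rho_t=R^-_t\le R^0_t$, so the sign terms combine favourably — concluding $U^-_t\equiv 0$. The argument for $V_t\ge 0$ is identical with the roles of $R^-$ and $R^+$ exchanged and the local time entering with the opposite sign but now helping because $V$ can only reach $0$ on $\{\rho_t\ge R^0_t\}$ where that term does not appear in the problematic regime. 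Combining, $R^-_t\le\rho_t\le R^+_t$ for all $t<\tau^D$, which also shows $\rho_t$ never leaves the (open, then relatively closed) annulus, consistent with $X$ staying in $\bar D_t$.

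The third step records that, with the ordering in hand, the pair $(X_t,D_t)=(X_t,A(R^-_t,R^+_t))$ is a genuine solution of \eqref{4.0}: the polar symmetry of the equation and of the initial data $\SU(D_0)$, together with uniqueness in law from Theorem \ref{T4.1}, identifies the general construction there with the explicit two-dimensional reduction \eqref{5.3.7}, in which $N^{D_t}=-\sign(\rho-R^0_t)\,\partial_r$ on the two boundary circles, $h^{D_t}$ on the circle of radius $r$ equals $b(r)$ (resp.\ $-b(r)$ on the inner circle, with the sign conventions of \eqref{5.3.2}), $S_t=C(o,R^0_t)$, $\theta^{S_t}\equiv\pi/2$ so $\sin\theta^{S_t}\equiv 1$, and $L^{S_t}_t(X)=L^{R^0_t}_t(\rho)$. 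Then \eqref{5.3.11.1} is exactly the intertwining conclusion of Theorem \ref{T4.1} pushed through the radial map $x\mapsto r(x)$, which sends $\SU(D_\tau)$ to $\SU^f((R^-_\tau,R^+_\tau))$ by the co-area formula \eqref{5.3.6}.

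The main obstacle is the second step at the diagonal set $\{\rho_t=R^0_t\}=\{U_t=V_t\}$: one has to handle simultaneously the reflection-type local time $L^{R^0_t}_t(\rho)$ (whose support is precisely this set) and the degeneracy of the diffusion coefficients of $U$ and $V$ there, and make sure the It\^o--Tanaka bookkeeping does not hide a negative contribution. A careful treatment — perhaps smoothing $\sign$ and passing to the limit, or invoking an appropriate comparison theorem for one-dimensional SDEs with reflection — is what needs to be written out in full; the rest is routine.
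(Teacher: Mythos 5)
Your proposal takes a genuinely different route from the paper, but the central step is left as a sketch and one link in your logic is not available. The paper does not attempt to deduce the ordering $R^-_t\le\rho_t\le R^+_t$ directly from the SDE \eqref{5.3.7} by an It\^o--Tanaka/comparison argument. Instead it constructs the pair $(X_t,D_t)$ by specializing Theorem~\ref{T4.1}: it writes explicit approximating functions $f_\d$ for the annulus (much simpler than the general \eqref{5.4.5} since no cut-offs $\ell_\e,g_\d$ are needed, and only the one-dimensional mollifier $\varphi_\d$, smoothed sign $\sign_\d$, and $g(r)=\int_{r^-}^r-\sign_\d(s-r^0)\,ds$ appear), checks that these satisfy the requirements of Proposition~\ref{A4.0bis}, and then invokes Theorem~\ref{T4.1} to produce the intertwined pair up to $\tau_\e$. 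The containment $X_t\in D_t$ (hence \eqref{5.3.10.1}) is then automatic from the construction, with no delicate bookkeeping at $\{\rho_t=R^0_t\}$. The remaining issue, and the one you do not address, is passing from the stopping time $\tau_\e$ of $\cF^{\a,\e}$ to the full explosion time $\tau^D$; the paper resolves this by observing that the volume of $A(R^-_t,R^+_t)$ is a time-changed three-dimensional Bessel process (Theorem~5 of \cite{CM:19}), so the annulus cannot collapse onto its skeleton, i.e.\ $\tau_\e\nearrow\tau^D$ a.s.

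Two further concerns with your argument as written. First, your Step 2 is precisely the part you concede needs to be ``written out in full''; the difficulty at $\{U_t=V_t\}$, where both the diffusion coefficient of $U$ degenerates and the reflection local time $L^{R^0_t}_t(\rho)$ is supported, is not routine, and the paper's choice to circumvent it should be taken as an indication of the obstruction. Second, your Step 3 invokes ``uniqueness in law from Theorem~\ref{T4.1}'', but Theorem~\ref{T4.1} is an existence theorem and does not assert uniqueness of the intertwined pair $(X,D)$; the paper's uniqueness statements (Appendix~\ref{AppendixG}) are for the marginal law of the $\wi\SL$-diffusion $D$, not for the joint dynamics, so that link in your chain does not hold as stated. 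Because of these two gaps -- the unfinished comparison argument and the unavailable uniqueness-in-law input -- the proposal does not yet constitute a proof, even though the overall shape (reduce by symmetry, prove ordering, conclude intertwining) is consonant with what one would hope for.
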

\begin{proof}
Fix $\e>0$ and $\a\in (0,1)$. We will first solve the system of equations until the exit time $\tau_\e$ and then let $\e\searrow 0$. Let us construct functions $f_\d^D(x)$ which satisfies equation~\eqref{5.4.5}. It will be easier here because there is no need of functions $\ell_\e$ and $g_\d$.

For $\d\in (0,\e)$, let $\varphi_\d :\RR\to \RR$ be the function with support equal to $[-\d/2,\d/2]$, satisfying for $-\d/2<r<\d/2$:
\begin{equation}
 \label{5.3.12}
 \varphi_\d (r)\df \f1{c(\d)}\exp\left(-\f1{\left(\f{\d}{2}\right)^2-r^2}\right)\quad \hbox{with}\quad c(\d):=\int_{-\d/2}^{\d/2}\exp\left(-\f1{\left(\f{\d}{2}\right)^2-s^2}\right)\,ds,
\end{equation}
and let 
\begin{equation}
 \label{5.3.13}
 \begin{split}
  \sign_\d : \RR&\to\RR\\
  r&\mapsto -1+2\int_{-\infty}^r\varphi_\d(s)\,ds.
 \end{split}
\end{equation}
The functions $\varphi_\d$ and $\sign_\d$ are both smooth and Lipschitz, and they respectively approximate $\d_0$ and $\sign$. For $0<r^-<r^+$ satisfying $r^+-r^-\ge 2\e$, defining $\di r^0 :=\f12(r^-+r^+)$, for $x\in A(r^-,r^+)$ let 
\begin{equation}
\label{5.3.14}
f^{A(r^-,r^+)}(x)=f(x,r^-,r^+)=g(r(x))\quad \hbox{with}\quad g(r)=g(r,r^-,r^+)=\int_{r^-}^{r} -\sign_\d(s-r^0)\, ds.
\end{equation}
Clearly $f(x,r^-, r^+)$ is $1$-Lipschitz in the first variable. A computation shows that   
\begin{equation}
\label{5.3.14.1}
\partial_{r^+}g(r,r^-,r^+)=\int_{-\e}^{r^0}\varphi_\d(v)\,dv\quad \hbox{and}\quad \partial_{r^-}g(r,r^-,r^+)=-\int_{r-r^0}^{\e}\varphi_\d(v)\,dv
\end{equation}
showing  that $g$ and $f$ are $1$-Lipschitz. Then the vector $N:=N_{\partial A(r^-,r^+)}$  is equal to $-\un_{\{r(x)=r^+\}}\partial_{r^+}+\un_{\{r(x)=r^-\}}\partial_{r^-}$ so that 
\begin{equation}
\label{5.3.14.2}
\langle \n f, N\rangle \equiv 1\quad \hbox{and}\quad  \n df(N,N)\equiv 0.
\end{equation}
This yields an elementary proof of  the properties of Proposition~\ref{A4.0bis}. We can use  Theorem~\ref{T4.1} to solve equation~\eqref{5.3.7} until the stopping time $\tau_\e$. 

We are left to prove that $\tau_\e\nearrow \tau^D$ a.s.\ as $\e\searrow 0$. This is a direct consequence of the fact that the volume of $A(R_t^-,R_t^+)$ is a time changed Bessel process of dimension $3$ (by~\cite{zbMATH07470497} Theorem~5), proving that $A(R_t^-,R_t^+)$ cannot collapse onto its skeleton.
\end{proof}
\begin{remark}
 \label{R5.3.1}
 After the hitting time of $0$ by $R_t^-$, the processes can continue to evolve under the regime of Section~\ref{Subsection5.2}.
\end{remark}

We recover from Proposition \ref{P5.3.1} a result from \cite{MR3634282} stating that $([R^-_t,R^+_t])_{t\geq 0}$ is an intertwining dual process for the real diffusion $(\rho_t)_{t\geq 0}$.
In particular, we deduce that if $(\rho_t)_{t\geq 0}$ is positive recurrent and if $+\iy$ is an entrance boundary, then $([R^-_t,R^+_t])_{t\geq 0}$ reaches $[0,+\iy]$ in finite time and this finite time is a strong stationary time for $(\rho_t)_{t\geq 0}$, see \cite{MR3634282} for more details.

\subsection{Brownian motion and symmetric convex sets in $\RR^2$}
\label{Subsection5.4}

In this section we take $M=\RR^2$ endowed with the Euclidean metric.
For any integer $n\ge 2$, let $G_n$ the group of isometries of $\RR^2$ generated by the rotation of angle $\di \f{2\pi}n$ and the symmetry with respect to the horizontal axis. 
 Consider a smooth strictly convex bounded set $D_0\subset M$ with smooth boundary, stable by the action of $G_n$.  Also assume that its skeleton has the form $S_0=G_nH_0$, $H_0$ being  an horizontal interval $H_0=[0,x_0]\times \{0\}$ for some $x_0>0$. An example of such a set when $n=2$ is the interior of an ellipse, the skeleton being the interval between the two foci. Assume that $X_t$ is a Brownian motion in $\RR^2$ satisfying $X_0\sim \SU(D_0)$. Let us investigate the evolution of $(X_t,D_t)$. Notice that it is the first example where we really have to deal with infinite dimensional processes.
By conservation of the convexity by the normal and mean curvature flows, $D_t$ will stay convex. It will also stay symmetric. All the results of this subsection will be proved in the forthcoming paper  \cite{ACM:21}:
\begin{prop}
\label{P5.4.1}
The  skeleton of $D_t$  always takes the form  $S_t=G_nH_t$ with  $H_t=[0,x_t]\times\{0\}$ an horizontal interval.
\end{prop}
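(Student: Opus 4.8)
The plan is to reduce the assertion to a monotonicity property of the boundary curvature of $D_t$ and then to propagate that property along the evolution \eqref{4.0} by a Sturmian (zero-number) argument. First I would collect the structural facts already at hand: conservation of convexity under the normal and mean-curvature flows, together with the reflection-equivariance of the construction, shows that $D_t$ stays strictly convex and symmetric with respect to both coordinate axes for every $t$. The only degenerate shape compatible with this is a disk, which is then centered at the origin, with $S(D_t)=\{0\}=[-x_t,x_t]\times\{0\}$ for $x_t=0$, so the statement is trivial there; hence assume $D_t$ is a non-circular strictly convex doubly symmetric body. Parametrize $\partial D_t$ by the angle $\phi\in\RR/2\pi\ZZ$ of its outer normal and write $\kappa_t(\phi)$ for the curvature at the corresponding point (the boundary value of $h^{D_t}$), $p_t(\phi):=1/\kappa_t(\phi)>0$ for the radius of curvature, and $h_t(\phi)$ for the support function, so that $p_t=h_t+\partial_\phi^2 h_t$; the double symmetry reads $\kappa_t(\phi)=\kappa_t(-\phi)=\kappa_t(\pi-\phi)$, so $\partial_\phi\kappa_t$ vanishes at the four axis angles $0,\pi/2,\pi,3\pi/2$.

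Next I would carry out the \emph{geometric reduction}: for a smooth strictly convex planar body the endpoints of the medial axis are exactly the centers of curvature at the local maxima of $\kappa$, and no branch point of the skeleton occurs when $\kappa$ has no interior local maximum on the four quadrant arcs, i.e.\ when $\kappa_t|_{[0,\pi/2]}$ is monotone. Under the double symmetry, ``decreasing'' then gives that $S(D_t)$ is a single segment on the horizontal axis, ``increasing'' a single segment on the vertical axis; in the first case, being symmetric about the origin, it is of the form $[-x_t,x_t]\times\{0\}$. Since $\kappa_0|_{[0,\pi/2]}$ is strictly decreasing by hypothesis, it suffices to show that $\kappa_t|_{[0,\pi/2]}$ stays (strictly) decreasing for as long as $D_t$ is not a disk.

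The core of the proof is the following observation about \eqref{4.0}. A boundary point $y$ with outer normal angle $\phi$ evolves by $d\partial D_t(y)=N^{D_t}(y)\bigl(dA_t+\tfrac12\kappa_t(\phi)\,dt\bigr)$, where $dA_t:=\langle dX_t,N^{D_t}(X_t)\rangle-h^{D_t}(X_t)\,dt-2\sin(\theta^{S_t}(X_t))\,dL_t^{S_t}(X)$ \emph{does not depend on }$\phi$: each of its three pieces (the skeleton local-time term included) is a scalar process carried by the primal particle $X_t$, and only the direction $N^{D_t}(y)$ it multiplies is $\phi$-dependent, while $h^{D_t}(y)=\kappa_t(\phi)$ is the curvature in dimension two. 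Reading this on the support function gives $dh_t(\phi)=-dA_t-\tfrac12\,(h_t+\partial_\phi^2 h_t)^{-1}(\phi)\,dt$; applying $h\mapsto h+\partial_\phi^2 h$ to pass to $p_t=1/\kappa_t$ (which annihilates the $\phi$-constant $dA_t$ under $\partial_\phi^2$), then differentiating once more in $\phi$ and setting $q_t:=\partial_\phi p_t$, the martingale term vanishes entirely, so $q_t$ solves a \emph{pathwise deterministic}, uniformly parabolic linear equation on the circle, $dq_t=\bigl(\tfrac1{2p_t^2}\,\partial_\phi^2 q_t+b_t\,\partial_\phi q_t+c_t\,q_t\bigr)\,dt$, with coefficients built from $p_t$ and its $\phi$-derivatives and leading coefficient $\tfrac1{2p_t^2}>0$ by strict convexity. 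Applying, $\omega$ by $\omega$, Angenent's zero-number theorem, the number of sign changes of $q_t(\cdot)=-\partial_\phi\kappa_t/\kappa_t^2$ over $\RR/2\pi\ZZ$ is a.s.\ nonincreasing in $t$; it equals $4$ at $t=0$ (the four axis angles, $\kappa_0$ being strictly monotone on each quadrant), the four axis angles remain zeros of $q_t$ by the persisting symmetry, so the count stays exactly $4$ and $\kappa_t$ stays monotone on each quadrant arc. The horizontal/vertical alternative is decided by the sign of $\kappa_t(0)-\kappa_t(\pi/2)$, which is continuous in $t$ and can change only at a time where $\kappa_t$ is constant on $[0,\pi/2]$, hence $D_t$ a disk centered at the origin; the sign therefore stays negative up to any such time (the horizontal regime), and from such a time on $D_t$ remains a disk centered at the origin — in all cases $S(D_t)=[-x_t,x_t]\times\{0\}$.

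The hard part will not be this propagation but the two analytic inputs it rests on. First, making the geometric reduction fully rigorous: characterizing precisely when the medial axis of a doubly symmetric convex body degenerates to a coordinate segment, including the borderline configurations (locally circular boundary arcs, degenerate critical points of $\kappa_t$ that are inflections rather than extrema) that are \emph{a priori} not excluded by strict convexity alone. Second, securing enough $\phi$-regularity of $p_t$ at positive times to legitimately run Angenent's theorem pathwise: the curve-shortening part of \eqref{4.0} supplies parabolic smoothing, but it is perturbed by the offset $A_t$, which is only a semimartingale in $t$, so one must verify that $q_t$ — which is in fact of finite variation in $t$, the stochasticity having cancelled — is smooth enough in $\phi$. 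Both points should be accessible through the boundary-regularity estimates of the companion paper already invoked for Propositions~\ref{P5.4.2} and~\ref{P5.4.4}.
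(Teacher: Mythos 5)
The paper gives no argument for this statement: its proof is a single line deferring to the forthcoming companion paper \cite{ACM:21}, so there is nothing to compare your proposal against directly. Reading between the lines of Propositions~\ref{P5.4.2} and~\ref{P5.4.3}, which are built on the Jacobi-field analysis of the skeleton motion from Appendix~\ref{Section3}, the authors' intended route most likely tracks the skeleton directly through the explicit endpoint dynamics. Your route is genuinely different: you translate the assertion into a monotonicity statement about the boundary curvature in the Gauss-map parametrization and then propagate it by a Sturmian zero-number argument. The central structural observation is correct and is, I think, the best feature of the sketch: in~\eqref{4.0} the only $y$-dependent term in the normal speed is $\tfrac12 h^{D_t}(y)$, so the stochastic and local-time parts enter the support function as a $\phi$-constant offset $-A_t$, which is annihilated by $\partial_\phi$; hence $q_t=\partial_\phi p_t$ satisfies a pathwise deterministic, uniformly parabolic linear equation with zero martingale part. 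If the regularity issues can be resolved, this Sturmian approach buys a conceptually clean monotonicity principle, and it neatly explains why the skeleton cannot branch or leave the axis: the zero count of $q_t$ is $\geq4$ by symmetry, $\leq 4$ by Sturm, hence $=4$.

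Two points, however, are not merely ``hard parts'' to be waved at the companion paper but real gaps in the argument as written. First, the regularity available in the present framework is $C^{2+\a}$ for $\pa D_t$ (one might push to $C^2$ for $h^{D_t}$ via Proposition~\ref{P5.4.2}), which gives $p_t=1/\kappa_t\in C^{2}$ and hence $q_t\in C^{1}$ in $\phi$; your parabolic equation for $q_t$ involves $q_t''=\partial_\phi^3 p_t$, so the equation is not even classical without first establishing additional parabolic smoothing in $\phi$ at positive times, and the Angenent-type theorem moreover requires some modulus of continuity of the coefficients $1/(2p_t^2)$, $b_t$, $c_t$ in time — these are only H\"older-$(1/2)^-$, inheriting the regularity of the Brownian offset $A_t$ through $p_t$. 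Neither obstacle is fatal, but each needs a lemma, not an assertion. Second, a smaller mismatch of hypotheses: the paper assumes only that $S'_0=[-x_0,x_0]\times\{0\}$, not that $\kappa_0$ is strictly decreasing on $[0,\pi/2]$; these coincide only modulo degenerate configurations (circular arcs, flat spots of $q_0$). You should either derive strict monotonicity from the segment hypothesis, or argue that Angenent's instantaneous-simplification of multiple zeros makes any initial degeneracy irrelevant at positive times. Finally, your handling of the disk alternative at the end would be cleaner phrased as backward uniqueness: if $D_{t_0}$ were a disk, then $q_{t_0}\equiv 0$ and backward uniqueness for the parabolic equation would force $q_0\equiv 0$, contradicting $x_0>0$; hence $\kappa_t(0)-\kappa_t(\pi/2)$ never vanishes and the skeleton never leaves the horizontal axis.
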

\begin{proof}
 See \cite{ACM:21}
\end{proof}

Denote by $(\imath, \jmath)$ the canonical basis of $\RR^2$, and $X_t=(X_t^{(1)},X_t^{(2)})$. In this notation, when $-\pi/n < \theta^{S_t}(X_t) < \pi/n$,  the vector $N^{D_t}(X_t)$ of Equation~\eqref{4.0} writes 
\begin{equation}
 \label{5.4.1}
 N^{D_t}(X_t)=-\sign(X_t^{(1)})\cos(\theta^{S_t}(X_t))\imath-\sign(X_t^{(2)})\sin(\theta^{S_t}(X_t))\jmath
\end{equation}
where $\theta^{S_t}(x)$ is naturally extended to $D_t$ by being constant on lines normal to the boundary (see \cite{ACM:21}). Notice that $x\mapsto \theta^{S_t}(x)$ is locally Lipschitz on $D_t$ and is equal to $0$ on $D_t\cap ( [x_t,\infty)\times\{0\})$. Also notice that the function $h^{D_t}$ is locally Lipschitz on $D_t\backslash S_t$.    With these notations, equation~\eqref{4.0} writes (again when $-\pi/n < \theta^{S_t}(X_t) < \pi/n$)
\begin{equation}
 \label{5.4.2}
 \begin{split}
 d\partial D_t(y)=&-N^{D_t}(y)\Biggl(\sign(X_t^{(1)})\cos(\theta^{S_t}(X_t))\,dX_t^{(1)}+\sin(\theta^{S_t}(X_t))\sign(X_t^{(2)})\,dX_t^{(2)}\\
 & +\left(\f12h^{D_t}(y)-h^{D_t}(X_t)\right)\,dt-2\sin(\theta^{S_t}(X_t))dL_t(X^{(2)})\Biggr).
 \end{split}
\end{equation}
This equation written for $-\pi/n < \theta^{S_t}(X_t) < \pi/n$  is enough to describe  the whole coupling, thanks to the symmetry properties of $D_t$.

Let us investigate the motion of the skeleton $\tilde S_t$ of the solution $\tilde D_t$ of equation~\eqref{6.5mod0} (garanteed by Theorem~\ref{6.5mod0hyp}). 
\begin{prop}
\label{P5.4.2}
 The process $\di \left(\tilde D_t\right)_{t\ge 0}$ takes its values in a closed subset $\wi \cF^{\a,\e}$ of $ \cF^{\a,\e}$, invariant by $G_n$, such that on $\wi \cF^{\a,\e}$,   the map $D\mapsto h^D|_{\partial D}$ is continuous from $\wi \cF^{\a,\e}$ (with the $C^{2+\a}$ metric) to $C^2(\partial D)$. Its skeleton $\tilde S_t$ satisfies $\tilde S_t=G_n\left([0,\tilde x_t]\times \{0\}\right)$ for some process $\tilde x_t$.
\end{prop}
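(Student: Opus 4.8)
The plan is to produce $\wi\cF^{\a,\e}$ explicitly, as the set of $D\in\cF^{\a,\e}$ that are strictly convex with $C^{2+\a}$ boundary, symmetric with respect to both coordinate axes, whose boundary curvature $\kappa$ is pinched, $0<\kappa_-\le\kappa\le\kappa_+<\iy$ for constants depending only on $\tilde D_0$ and $\e$, and whose medial axis is of the form $[-x,x]\times\{0\}$ for some $x\ge0$; then to check, in order, that (a) the flow \eqref{6.5mod0} started at $\tilde D_0$ stays in this set up to $\tau_\e$, (b) the set is closed, (c) $D\mapsto h^D|_{\partial D}$ is continuous on it into $C^2$, and (d) the skeleton endpoint is the announced process $\tilde x_t$.

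For the symmetry in (a) I would invoke uniqueness from Theorem~\ref{6.5mod0hyp}. If $R_1,R_2$ denote the orthogonal reflections of $\RR^2$ across the two axes, the mean curvature, the perimeter $\usm^{\partial D}(\partial D)$ and the area $\mu(D)$ are invariant under these isometries while the inward normal transforms covariantly, so each $R_i\tilde D_t$ solves \eqref{6.5mod0} driven by the same $\wi W_t$ with datum $R_i\tilde D_0=\tilde D_0$; uniqueness gives $R_i\tilde D_t=\tilde D_t$ a.s.\ for all $t$, so $\tilde D_t$ stays doubly symmetric. Convexity and the curvature pinching come from a maximum-principle comparison: after reparametrising time by the (monotone) Brownian clock, the normal velocity of $\partial\tilde D_t$ is $\dot{\wi W}_t+\f12 h^{\tilde D_t}-c_t$ with $c_t\df\usm^{\partial\tilde D_t}(\partial\tilde D_t)/\mu(\tilde D_t)$ spatially constant; a spatially constant normal displacement preserves convexity (it produces an inner or outer parallel body) and so does the curvature term, while the medial-axis condition built into $\wi\cF^{\a,\e}$ is equivalent to $\kappa$ being monotone on each quarter arc (see below), so its extrema sit at the four axis crossings — which reduces the control of $\max\kappa$ and $\min\kappa$ to one-dimensional It\^o comparison estimates keeping $\kappa$ in $[\kappa_-,\kappa_+]$, hence keeping $\tilde D_t$ inside $\cF^{\a,\e}$, since for a convex body $\rho(\partial D,S^{\rm out}(D))=+\iy$ automatically and the pinching bounds $\rho(\partial D,S(D))$ from below, up to a deterministic horizon that we take as the reparametrisation of $\tau_\e$. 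The $C^{2+\a}$ bounds propagate by parabolic smoothing of the same equation.

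The geometric heart is the medial-axis condition. Writing the upper half of $\partial\tilde D_t$ as a concave graph $y=u_t(x)$, $x\in[-a_t,a_t]$, double symmetry shows that a point of the horizontal axis lies on the medial axis exactly when its nearest boundary point is off the axis, and one then checks that the medial axis is a single horizontal segment precisely when $\kappa$ is monotone along each quarter arc, larger at the horizontal tips than at the vertical ones — a property held by $\tilde D_0$ (an ellipse, say). I would propagate this monotonicity by differentiating in arclength the scalar stochastic parabolic equation for $\kappa$ along $\partial\tilde D_t$ read off from \eqref{6.5mod0} and applying the maximum principle, the symmetry pinning the critical points of $\kappa$ at the axis crossings so that no new interior curvature maximum, hence no new medial-axis branch, can form; the endpoint $\tilde x_t$ is then a function of $\kappa$ at the horizontal tip of $\partial\tilde D_t$, which is (d). Closedness in (b) holds because convexity, double symmetry and the pinching are $C^2$-closed, while the segment form of the medial axis passes to $C^2$ limits within this class thanks to the continuity of $D\mapsto S(D)$, which holds for this restricted class by the same estimates — this is the relevant instance of Assumption~\ref{A4.0}. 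Finally (c) is immediate: parametrising domains near a fixed reference boundary by normal graphs, the second fundamental form depends continuously in $C^2$ on the $C^{2+\a}$ graph function, so on $\wi\cF^{\a,\e}$, where the $C^{2+\a}$ norms are uniformly bounded, $D\mapsto h^D|_{\partial D}$ is continuous into $C^2$.

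The main obstacle is the pair of preservation statements inside step (a): the a priori curvature pinching (needed both to remain in $\cF^{\a,\e}$ and to keep the medial axis a nondegenerate segment bounded away from $\partial D$) and, above all, the propagation of the curvature monotonicity that forces the medial axis to remain a single horizontal segment; these are coupled in a bootstrap, both coming down to a delicate analysis of the scalar stochastic parabolic equation governing the boundary curvature, and this is exactly what is deferred to~\cite{ACM:21}.
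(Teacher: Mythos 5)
The paper itself offers no proof of Proposition~\ref{P5.4.2}: the body of the ``proof'' is the single sentence ``This will be proved in \cite{ACM:21}.'' So there is no argument on record to compare against, and the fair assessment is that your sketch is a plausible reconstruction of the \emph{programme} one would have to carry out, with you yourself correctly flagging in your final paragraph exactly where the real mathematical content lies (the a priori curvature pinching and, above all, the propagation of the curvature-monotonicity that keeps the medial axis a single horizontal segment). That self-diagnosis is honest and matches what the authors have chosen to defer.

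A few of the unqualified claims along the way deserve more care than your sketch gives them. First, the asserted equivalence between ``medial axis is a segment'' and ``curvature monotone on each quarter arc'' is not an identity: for a doubly symmetric strictly convex $C^2$ domain, monotonicity of $\kappa$ with $\kappa$ maximal at the horizontal tips is \emph{sufficient} (it prevents new cut-locus branches away from the horizontal axis), but the converse is false --- one can build domains with a segment medial axis and a non-monotone quarter-arc curvature, so the proposed membership condition for $\wi\cF^{\a,\e}$ is strictly stronger than what the proposition asserts; that is fine for existence of \emph{some} $\wi\cF^{\a,\e}$, but the sketch phrases it as an equivalence. Second, ``parabolic smoothing of the same equation'' to propagate $C^{2+\a}$ bounds does not apply to \eqref{6.5mod0} directly because of the Brownian forcing; one has to pass to the Doss--Sussmann conjugate (as in Appendix~\ref{appC}) and run Schauder theory there, and the $C^{2+\a}$ bound one gets will degenerate near $\tau_\e$ unless the curvature pinching already controls the H\"older seminorm of $\mathrm{Hess}$, which is precisely part of the circular bootstrap you name. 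Third, ``the endpoint $\tilde x_t$ is then a function of $\kappa$ at the horizontal tip'' presumes the tip radius of curvature is where the medial axis terminates, i.e.\ $\tilde x_t=a_t-1/\kappa_t(a_t,0)$ with $a_t$ the horizontal semi-axis; this identity is correct for the class you restrict to, but it needs the monotonicity already in force and should be stated as a consequence, not a definition. None of these issues invalidate the plan, but in each case the sketch treats as a side remark a point that carries genuine weight, which is consistent with the fact that the full argument is the subject of a separate paper.
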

\begin{proof}
 See \cite{ACM:21}
\end{proof}
In the next result we prove that the skeleton has finite variation and is monotonly decreasing.
\begin{prop}
\label{P5.4.3}
   The right endpoint $(\tilde x_t,0)$ in the horizontal axis of the skeleton $\tilde S_t$ satisfies
\begin{equation}
 \label{5.4.3}
 \f{d \tilde x_t}{dt}=\f{\rho^2((\tilde x_t,0),\tilde y_t)}2(h^{\tilde D_t})''(\tilde y_t), 
\end{equation}
$\tilde y_t$ being the point of $\partial \tilde  D_t$ in the horizontal line with the greatest abscissa, and the second derivative being calculated with curvilinear coordinates on $\partial\tilde  D_t$.  Notice that $(h^{\tilde D_t})''(\tilde y_t)\le~0$, proving that the process $S(\tilde D_t)$ is monotonly decreasing.
\end{prop}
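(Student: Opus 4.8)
The plan is to reduce the motion of the medial-axis endpoint to a geometric computation on $\partial\tilde D_t$, the main point being that $\tilde x_t$ is driven only by the curvature part of the flow~\eqref{6.5mod0}. First I would record the elementary identification: by Propositions~\ref{P5.4.1} and~\ref{P5.4.2}, for each $t$ the set $\tilde D_t$ is strictly convex, $C^{2+\a}$ and symmetric about both axes with $\tilde S_t=[-\tilde x_t,\tilde x_t]\times\{0\}$, so the right endpoint $(\tilde x_t,0)$ is precisely the centre of the osculating circle of $\partial\tilde D_t$ at the rightmost point $\tilde y_t$. Writing $a_t$ for the abscissa of $\tilde y_t$ and $R_t\df\rho((\tilde x_t,0),\tilde y_t)$ for the radius of curvature of $\partial\tilde D_t$ at $\tilde y_t$, this means
\[
\tilde x_t=a_t-R_t,\qquad R_t=\frac1{h^{\tilde D_t}(\tilde y_t)}.
\]
By the vertical symmetry of~\eqref{6.5mod0}, $\tilde y_t$ is the material point of the moving boundary lying on the positive horizontal axis; applying the implicit function theorem to the curvature map along the $C^{2+\a}$ evolution of Proposition~\ref{P5.4.2} then shows that $t\mapsto(a_t,R_t,\tilde x_t)$ is a semimartingale, so the above identities may be differentiated.

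Next I would split the normal velocity in~\eqref{6.5mod0} as the spatially constant part $d\tilde W_t-\bigl(\usm^{\partial\tilde D_t}(\partial\tilde D_t)/\mu(\tilde D_t)\bigr)\,dt$ plus the curvature part $\tfrac12 h^{\tilde D_t}\,dt$. The constant part moves $\partial\tilde D_t$ by a (stochastic) offset $D\mapsto\{\rho^+_{\partial D}>b\}$, and the parallel-set calculus of Appendix~\ref{Section2} shows that offsetting by $b$ leaves the centre of every osculating circle fixed while decreasing every radius of curvature by exactly $b$. Hence, under this part of the flow, $a_t$ and $R_t$ each vary affinely with slope $-1$ and with no It\^o correction, so in $\tilde x_t=a_t-R_t$ this part --- in particular the whole martingale $d\tilde W_t$ --- cancels. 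This is the conceptual core: it explains why $\tilde x_t$ has finite variation although $\partial\tilde D_t$ does not, and it reduces the problem to the deterministic rescaled curve-shortening flow $\partial_t y=\tfrac12 h^{\tilde D_t}(y)N^{\tilde D_t}(y)$.

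Under that flow I would compute the two pieces. At $\tilde y_t$ the inward normal is $-\imath$, so $\dot a_t=-\tfrac12 h^{\tilde D_t}(\tilde y_t)$. For the curvature I would use the classical evolution equation $\partial_t\kappa=\partial_s^2 V+\kappa^2 V$ of a planar curve moving with normal speed $V$, here with $V=\tfrac12 h^{\tilde D_t}$ and $\kappa=h^{\tilde D_t}$, at the point $\tilde y_t$ where the symmetry forces $\partial_s h^{\tilde D_t}=0$; this gives $\tfrac{d}{dt}h^{\tilde D_t}(\tilde y_t)=\tfrac12(h^{\tilde D_t})''(\tilde y_t)+\tfrac12\bigl(h^{\tilde D_t}(\tilde y_t)\bigr)^3$, hence $\tfrac{d}{dt}R_t=-\tfrac1{h^{\tilde D_t}(\tilde y_t)^2}\tfrac{d}{dt}h^{\tilde D_t}(\tilde y_t)=-\tfrac12 h^{\tilde D_t}(\tilde y_t)-\tfrac{R_t^2}2(h^{\tilde D_t})''(\tilde y_t)$. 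Subtracting,
\[
\dot{\tilde x}_t=\dot a_t-\frac{d}{dt}R_t=\frac{R_t^2}{2}(h^{\tilde D_t})''(\tilde y_t)=\frac{\rho^2((\tilde x_t,0),\tilde y_t)}2(h^{\tilde D_t})''(\tilde y_t),
\]
which is~\eqref{5.4.3}. For the monotonicity I would use that a boundary point attached to an endpoint of the medial axis of a smooth convex planar domain is a local maximum of the curvature (a curvature minimum there would produce two distinct nearby medial-axis endpoints); together with $\partial_s h^{\tilde D_t}(\tilde y_t)=0$ this yields $(h^{\tilde D_t})''(\tilde y_t)\le 0$, so $t\mapsto\tilde x_t$ is nonincreasing and $S(\tilde D_t)$ decreases.

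The hard part is the rigorous version of the second step: one must make precise --- and control near the skeleton --- the decomposition of the stochastic boundary flow into an offset deformation and a curvature flow, and check that the offset part contributes no It\^o term to $a_t$ or $R_t$; one must also justify the differentiability of $t\mapsto\tilde y_t$, which needs both the $C^{2+\a}$ regularity of $(\tilde D_t)$ from Proposition~\ref{P5.4.2} and the non-degeneracy of the curvature vertex at $\tilde y_t$. These geometric inputs, as well as the curvature-maximum property used for the monotonicity, are developed in the companion paper~\cite{ACM:21}.
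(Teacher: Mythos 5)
Your proof is correct, but it takes a genuinely different route from the one in the paper. The paper's proof works directly on the motion of a skeleton point $\hat x_t$ near the endpoint, using the Jacobi-field formula~\eqref{3.22} of Theorem~\ref{T3.1} in Appendix~\ref{Section3}: it identifies $J_1^\perp(1)=-\tfrac12\rho_S(\wi y_{1,t})h'(\wi y_{1,t})T(\wi y_{1,t})$, plugs in the explicit expressions of $N_1,N_2,N_1^S$ in $\RR^2$, obtains $\f{d}{dt}\hat x_t=\f{\rho_S^2(\wi y_{1,t})h'(\wi y_{1,t})}{2\,\wi y_{1,t}^{(2)}}\,\imath$, and then passes to the limit $\wi y_{1,t}^{(2)}\to 0$, using the axial symmetry ($h'(\wi y_t)=0$) to replace $h'/\wi y_{1,t}^{(2)}$ by $h''$. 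Your argument instead sidesteps the skeleton-motion machinery of Appendix~\ref{Section3} entirely: you identify the endpoint as the centre of the osculating circle, write $\tilde x_t=a_t-R_t$, exploit the exact linearity of the abscissa $a_t$ and radius of curvature $R_t$ under normal offsets to make the It\^o and drift contributions from $d\tilde W_t-\bigl(\usm^{\partial \tilde D_t}(\partial\tilde D_t)/\mu(\tilde D_t)\bigr)dt$ cancel (the key observation that $R\mapsto R-b$ is affine, so $R$ picks up no It\^o correction), and then compute the residual from the planar curvature-evolution identity $\partial_t\kappa=\partial_s^2 V+\kappa^2 V$ with $V=\tfrac12 h$. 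The two computations agree, since the limit $\rho_S(\wi y_{1,t})\to\rho((\tilde x_t,0),\tilde y_t)$ in the paper's formula is exactly your $R_t$. What the paper's route buys is uniformity with the rest of the article: it instantiates the general skeleton-evolution formula, which is built to handle curved ambient manifolds. What your route buys is conceptual transparency in the flat two-dimensional case (the martingale cancellation is built in from the start, rather than emerging from the disappearance of $dz_t$ in Appendix~\ref{Section3}) and avoids the delicate limiting argument as $\wi y_{1,t}\to\tilde y_t$. Both proofs lean on the same unproved geometric inputs --- the regularity and symmetry of $\tilde D_t$ from Proposition~\ref{P5.4.2}, and the vertex/maximum-curvature structure at $\tilde y_t$ --- which the paper defers to~\cite{ACM:21}, just as you do.
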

\begin{proof}
Let us investigate the motion of a point in $\tilde S_t$ close to  $(\tilde x_t,0)$. This point has two closest points in $\partial \tilde D_t$, which we call $ \wi y_{1,t}$ and $ \wi y_{2,t}$, the first one having positive second coordinate. We will use Theorem~\ref{T3.1} and~\eqref{3.22}. 
Call $\hat x_t$ the point in the skeleton corresponding to $ \wi y_{1,t}$ and $ \wi y_{2,t}$. We have $N_1(\hat x_t)=-\cos \theta(\hat x_t)\imath -\sin \theta(\hat x_t)\jmath$, $N_2(\hat x_t)=-\cos \theta(\hat x_t)\imath +\sin \theta(\hat x_t)\jmath$, $N_1^S(\hat x_t)=-\jmath$. Denote $T( \wi y_{1,t})$ the tangent vector to $\partial \tilde D_t$ at $ \wi y_{1,t}$, corresponding to increasing of $\theta$: $T( \wi y_{1,t})=-\sin \theta(\hat x_t)\imath +\cos \theta(\hat x_t)\jmath$. Write $h'(\wi y_{1,t})$ the curvilinear derivative of $h(\wi y_{1,t})$ in the direction of $T( \wi y_{1,t})$. Then the vector $J_1^\perp(1)$ of~\eqref{3.22} is equal to $\di -\f12\rho_S( \wi y_{1,t})h'(\wi y_{1,t})T( \wi y_{1,t})$. So we get from~\eqref{3.22}:
\begin{equation}
\label{5.4.4}
\begin{split}
\f{d}{dt}\hat x_t&=\f12\rho_S( \wi y_{1,t})h'(\wi y_{1,t})\left(\sin \theta(\hat x_t)+\f{\cos^2 \theta(\hat x_t)}{\sin \theta(\hat x_t)}\right)\imath\\
&=\f{\rho_S( \wi y_{1,t})h'(\wi y_{1,t})}{2\sin \theta(\hat x_t)}\imath\\
&=\f{\rho_S^2( \wi y_{1,t})h'(\wi y_{1,t})}{2\wi y_{1,t}^{(2)}}\imath\quad \hbox{with}\quad \wi y_{1,t}=(\wi y_{1,t}^{(1)},\wi y_{1,t}^{(2)}).
\end{split}
\end{equation} 
In the limit, as $\wi y_{1,t}^{(2)}$ goes to zero, we obtain the motion of $\tilde x_t$ and using the symmetry of the convex set, we have $h'(\wi y_t)=0$ so that  we can replace $\di \f{h'(\wi y_{1,t})}{\wi y_{1,t}^{(2)}}$ by $\di h''(\wi y_t)$. This yields~\eqref{5.4.3}.
\end{proof}

In particular a Brownian motion $X_t$ will never meet the ends of $\tilde S_t$.

A  solution to~\eqref{5.4.2} can be found with the help of 
Theorem~\ref{T4.1}.  The  family of functions $f_\d(x,D)$ defined in~\eqref{5.4.5} takes the form: 
\begin{equation}
\label{5.4.5bis}
\begin{split}
f_\d(x,D)&=\ell_\e(x)\rho_\d(x,\partial D)+(1-\ell_\e(x))\int_{\RR^2}\varphi_\d(\vert x-y\vert)\rho_\d(y,\partial D)\, dy\\
&= \ell_\e(x)\rho_\d(x,\partial D)+(1-\ell_\e(x))\int_{\RR^2}\varphi_\d(\vert y\vert )\rho_\d(x-y,\partial D)\, dy.
\end{split}
\end{equation}

The investigation of the lifetime of the solution to~\eqref{5.4.2} is not easy. In  \cite{ACM:21} we prove that the lifetime is the time when $D_t$ meets its skeleton $S_t$. So it is enough to investigate the  time $\tilde\tau$ when $\tilde D_t$ meets its skeleton $\tilde S_t$.  We have no example where this happens. The next proposition yields examples where the lifetime is infinite, together with nice properties related to the symmetry group $G_n$.

\begin{prop}
\label{P5.4.4}
\begin{enumerate}
\item the process $\di \left(\f{\usm^{\partial\wi D_t}(\partial\wi D_t)}{\mu(\wi D_t)}\right)_{0\le t<\tilde \tau}$ is a supermartingale;
\item when $\tilde S_0$ is $G_n$-symmetric with $n\ge 3$, then the entropy process $\left(\widetilde{\rm{Ent}}_t \right)_{0\le t<\tilde \tau}$ defined as the integral of $\rho\log \rho$ with respect to the curvilinear abscissa in $\partial \tilde D_t$, $\rho$ being the curvature of $\partial \tilde D_t$, is a supermartingale;
\item when $\tilde S_0$ is $G_n$-symmetric with $n\ge 7$, then $\tilde \tau=\infty$ a.s. Consequently, when $ S_0$ is $G_n$-symmetric with $n\ge 7$,
Equation~\eqref{5.4.2} provides an intertwining with infinite lifetime.
\end{enumerate}
\end{prop}
\begin{proof}
 See \cite{ACM:21}
\end{proof}

\appendix
\renewcommand{\theequation}{\Alph{section}.\arabic{equation}}

\section{An integration by parts on domains with boundary}
\label{Section2}
\setcounter{equation}0

Our goal here is to obtain an extension of Stokes's formula  on a domain with a smooth boundary, for functions which degenerate on the skeleton.
We take the opportunity to recall this notion, as well as related   geometric concepts.
\par\me
Let $M$ be a $d$-dimensional Riemannian manifold 
{and} $D\subset M$ a {compact and} connected domain with smooth boundary $\partial D$. For $y\in \partial D$, let $N(y)$ be the inward normal vector. Denote by $S'$ the inward (morphological) skeleton of $D$: $S'$ is the set of points in $D$ such that (i) the distance to $\partial D$ is not smooth 
and (ii) there are points  around them where the distance to $\pa D$ is smooth with a non vanishing gradient. Denote
\begin{equation}
 \label{2.1}
 \tau(y)=\inf\{t>0,\ \exp_y(t N(y))\in S'\}. 
\end{equation}
Let $S$ be the set of regular points of $S'$, which we can describe as follows:
 if $x\in S$, then there exists a unique {couple $(y_1,y_2)$ of distinct points from {$\partial D$} }such that 
\begin{equation}
 \label{2.2}
 x=\exp_{y_1}\left(\tau(y_1)N(y_1)\right)=\exp_{y_2}\left(\tau(y_2)N(y_2)\right).
\end{equation}
 We have $\tau(y_1)=\tau(y_2)$, and for $i=1,2$, the differential at $(\tau(y_i),y_i)$ of the map $\R_+\times \partial D\ni (t,y)\mapsto \exp_y(tN(y))$ is nondegenerate. The set $S$ is a codimension $1$ submanifold of $M$
 and $S'\backslash S$ has Hausdorff dimension smaller than or equal to~$d-2$. It is the 
 union of the focal set which is the set of points $x=\exp_y(\tau(y)N(y))$  such that  $(t,y')\mapsto \exp_{y'}(tN(y'))$ is degenerate at $(\tau(y), y)$, and the union of the sets defined like $S$ but with{strictly} more than two points $y_1$, $y_2$, {$y_3$,...} 
For $r\ge 0$, let
\begin{equation}
 \label{2.3}
 D(r)=\{ z\in D\backslash S', \  \rho_{\partial D} (z)\ge r\}. 
\end{equation}
{where $\rho$ is the Riemannian distance.}
The set $D(r)$ is a {(possibly empty)} manifold with smooth boundary $\partial D(r)$ on which one can define an inward normal $N(y)$ and an orientation by parallel transporting oriented basis of $\partial D$ along normal geodesics. So we have for all $y\in D\backslash S'$: $N(y)=\n \rho_{\partial D}(y)$. 

We will also need the sets $D(r)$ for all $r\in\RR$. We will let for $r<0$ 
\begin{equation}
 \label{2.3.1}
 D(r)=\{ z\in M, \  \rho^+_{\partial D} (z)\ge r\}
\end{equation}
where $\rho^+_{\partial D}$ is the signed distance to $\partial D$, positive inside $D$, negative outside $D$.

Define for $s, t\in \RR$ 
\begin{equation}
 \label{2.4}
 \begin{split}
 \psi(s,t) : \partial D(s)&\to \partial D(t)\\
 y&\mapsto \exp_y\left((t-s)N(y)\right)
 \end{split}
\end{equation}
and $\psi(t)=\psi(0,t)$. We will indifferentely write $\psi(t)(x)=\psi(t,x)$.
The function $\psi(s,t)$ is not defined for all points of $\partial D(s)$ because we ask $\psi(s,t)(y)\in \partial D(t)$,
 nor is $N(\cdot)$. However for $|s|$ and $|t|$ small it is a map, defined for all $y\in\partial D(s)$, and is is also a diffeomorphism with inverse $\psi(t,s)$.

 We have for $0\le s\le t$, 
$
\psi(t)=\psi(s,t)\circ\psi(s)$, which implies \begin{equation}\label{2.7}\det T\psi(t)=\det T\psi(s,t)\times \det T\psi(s).\end{equation} 
Notice that thanks to the orientation of the sets $\partial D(r)$ we get an orientation of $D\backslash S'$ by adding $N$ as first vector to oriented basis, consequently $\det T\psi$ is well defined and always positive. 
{It is well-known} that
\begin{equation}
 \label{2.5}
 \f{d}{dt}\Big|_{t=s}\det T\psi(s,t)(y)=-h(y)
\end{equation}
where $h(y)$ is the inward mean curvature of $\partial D(s)$ 
(the minus sign of the r.h.s.\ of \eqref{2.5} insures that $h$ is non-negative on $\pa D(s)$ when $D(s)$ is convex). This together with~\eqref{2.7} yields 
\begin{equation}
 \label{2.6}
 \f{d}{dt}{\Big|_{t=s}}\det T\psi(t)(y)=-h\left(\psi(s)(y)\right)\det T\psi(s)(y)
\end{equation}
and consequently, using $\psi(0)=\id$ and $\det T\psi(0)\equiv 1$,
\begin{equation}
 \label{2.8}
 \det T\psi(t)(y)=\exp\left(\int_0^t -h\left(\psi(s)(y)\right)\,ds\right).
\end{equation}
Denote by $\mu$ the volume measure of $D$ and by $\usm$ the volume measures of the manifolds $\partial D(s)$ and of $S$.
Then 
\begin{equation}
 \label{2.9}
 \mu(D)=\int_0^\infty \usm\left(\partial D(r)\right)\, dr.
\end{equation}
But for $r\ge 0$
\begin{equation}
 \label{2.10}
 \usm\left(\partial D(r)\right)=\int_{\partial D}\det T\psi(r)(y)\,\usm(dy)
\end{equation}
with convention $\det T\psi(r)(y)=0$ if $r\ge \tau(y)$. We get 
\begin{equation}
 \label{2.11}
 \usm\left(\partial D(r)\right)=\int_{\partial D}\exp\left(-\int_0^rh(\psi(s)(y))\,ds \right)1_{\{r<\tau(y)\}}\,\usm(dy)
\end{equation}
which yields with~\eqref{2.9}
\begin{equation}
 \label{2.12}
 \mu\left(D\right)=\int_{\partial D}\left(\int_0^{\tau(y)}\exp\left(-\int_0^rh(\psi(s,y))\,ds \right)\, dr\right)\,\usm(dy).
\end{equation}
More generally, for a measurable function $g : D\to \RR$ bounded below,
 \begin{equation}
 \label{2.13}
 \int_Dg\,d\mu=\int_{\partial D}\left(\int_0^{\tau(y)}g\left(\psi(r,y)\right)\exp\left(-\int_0^rh(\psi(s,y))\,ds \right)\, dr\right)\,\usm(dy).
\end{equation}
Applying this formula to the function $gh$ which we assume to be bounded below or integrable, we get by integration by parts
\begin{align*}
 \int_Dgh\,d\mu&=\int_{\partial D}\left(\int_0^{\tau(y)}-g\left(\psi(r,y)\right)\f{d}{dr}\exp\left(-\int_0^rh(\psi(s,y))\,ds \right)\, dr\right)\,\usm(dy)\\
 &=\int_{\partial D}\left[-g\left(\psi(r,y)\right)\exp\left(-\int_0^rh(\psi(s,y))\,ds\right)\right]_0^{\tau(y)}\,\usm(dy)\\
 &+\int_{\partial D}\left(\int_0^{\tau(y)}\langle dg,N\rangle\left(\psi(r,y)\right)\exp\left(-\int_0^rh(\psi(s,y))\,ds \right)\, dr\right)\,\usm(dy)\\
 &=\int_{\partial D}g(y)\,\usm(dy)-\int_{\partial D}g(\psi(\tau(y),y))e^{-\int_0^{\tau(y)}h(\psi(u,y))\,du}\,\usm(dy)\\&+\int_D\langle dg,N\rangle \, d\mu.
\end{align*}
Define the map 
\begin{equation}
 \label{2.14}
 \begin{split}
  \varphi :  \partial D &\to S'\\
  y&\mapsto \psi(\tau(y),y).
 \end{split}
\end{equation}
For $z=\psi(\tau(y_i),y_i)\in S$ ($i=1,2$) define $\theta(z){\in(0,\pi/2]}$ the angle 
between $N(\psi(\tau(y_i)-,y_i))$ and $S$. 
{In the sequel we assume that $\theta(z)\not=\pi/2$ (the case $\theta(z)=\pi/2$ is simpler to deal with and Proposition \ref{P1} is always valid).}
Notice that this angle does not depend on $i$, {this is a consequence of $z\in S$ staying at the same distance to $y_1$ and $y_2$ by infinitesimal variation}. For later use, let also $\theta(z)=0$ when $z\in S'\backslash S$. Let us prove that for $z=\psi(\tau(y_i),y_i))\in S$, 
\begin{equation}
 \label{2.15}
 \det T\psi (\tau(y_i),y_i)=\sin \theta(\varphi(y_i)) \det T\varphi(y_i),\qquad i=1,2.
\end{equation}
Set $y=y_1$. Let $e_1=N(y)$, $e_1^S=N(\psi(\tau(y)-,y))$, $N^S(z)$ the normal to $S$ at $z$ such that $\langle N^S(z),e_1^S\rangle>0$,  let $e''=(e_3,\ldots, e_d)$ be a family of  orthonormal normalized vectors in $T_y\partial D$ such that letting $\di e_2=\f{\n\tau(y)}{\|\n\tau(y)\|}$ {(we have $\na\tau(y)\not=0$, since $\theta(z)\not=\pi/2$)}, 
$e':=(e_2, e'')$ is an orthonormal basis of $T_y\partial D$, let $(e^S)''=(e_3^S,\ldots , e_d^S)$ be an orthonormal basis of $T_y\varphi({\rm Vect}(e''))$,  let $e_2^S$ such that $(e^S)':=(e_2^S,\ldots , e_d^S)$ is an orthonormal basis of $T_zS$. Finally let $e_2^\theta\in T_zM$ be such that $\langle e_2^\theta,{N(z)}\rangle <0$ {($e_2^\theta$ and $N^S(z)$ are not orthogonal, since $\theta(z)\not=\pi/2$)} and $(e_1^S, e_2^\theta, (e^S)'')$ is an orthonormal basis of $T_zM$. 
{Figure \ref{fig1} shows the configuration of $e_1^S, N^S(z), e_2^S$ and $e_2^{\theta}$ on an example of dimension 2.}
In the sequel we will denote for instance $\di T\varphi(e')=\left(\begin{matrix}T\varphi(e_2)\\ \vdots\\ T\varphi(e_d)\end{matrix}\right)$, so that $\langle T\varphi(e'), (e^S)'\rangle$ will be the matrix of all scalar products. 
We have
\begin{align*}
& \langle T\varphi(e'), (e^S)'\rangle \\&=\langle d\tau,e'\rangle\langle \partial_t\psi(\tau(y),y), (e^S)'\rangle+ \langle T\psi(e'),(e^S)'\rangle\\
 &=\left(\begin{matrix}
          \langle d\tau,e_2\rangle\langle \partial_t\psi, e_2^S\rangle+\langle T\psi(e_2), e_2^S\rangle&
          \langle T\psi(e_2),(e^S)''\rangle\\
          \langle d\tau,e''\rangle\langle\partial_t\psi, e_2^S\rangle+\langle T\psi (e''), e_2^S\rangle
          & \langle T\psi(e''),(e^S)''\rangle
         \end{matrix}
\right).
\end{align*}
Let us simplify and make more explicit this expression. 
We have $\langle d\tau,e''\rangle=0$. Also $e_2^\theta\perp (e^S)''$ and $e_2^S\perp (e^S)''$ so $e_2^S\in {\rm Vect}(e_1^S, e_2^\theta)$ and more precisely 
\begin{equation}
 \label{2.16}
 e_2^S=\cos(\theta(z)) e_1^S+\sin(\theta(z))e_2^\theta.
\end{equation}
\begin{figure}[!h]
 \includegraphics[width=10cm]{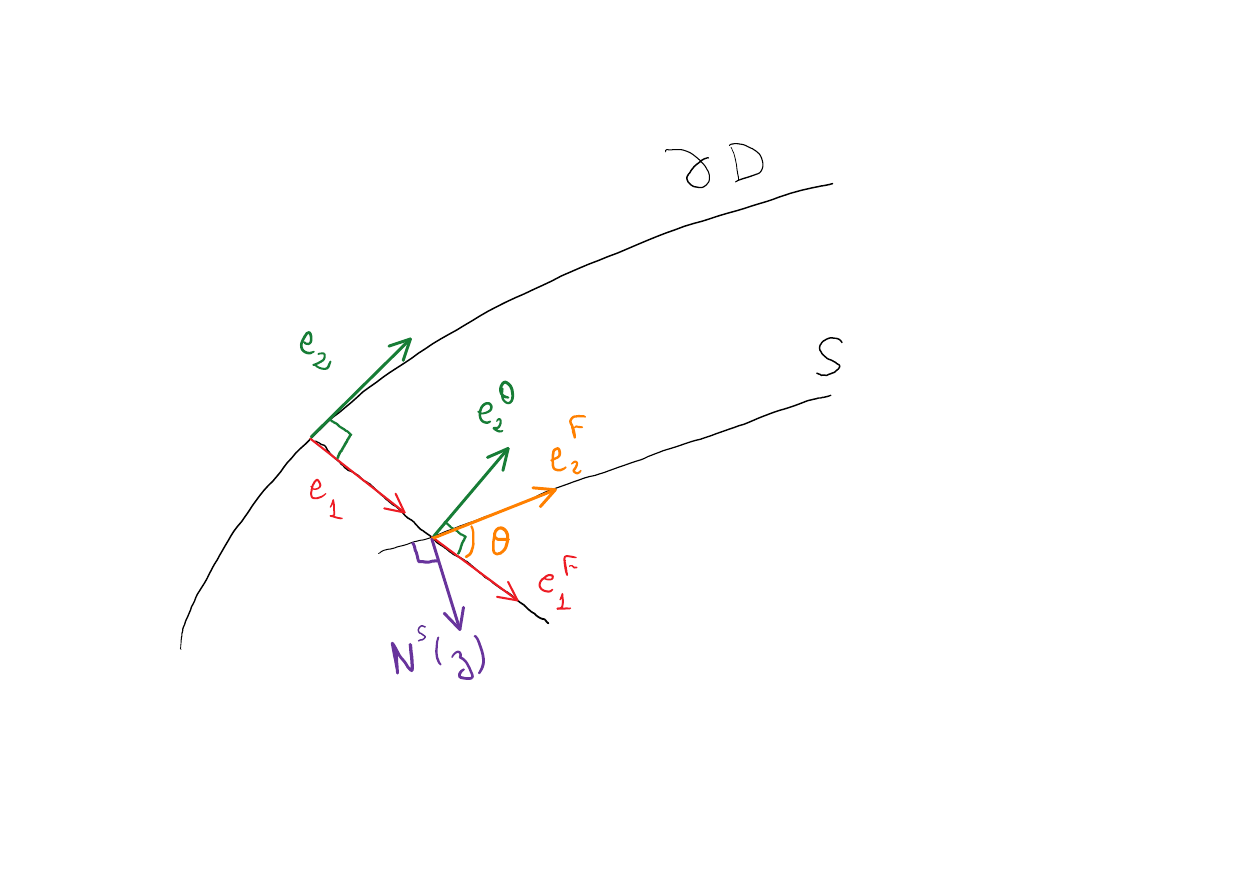}
   \caption{ The vectors $e_1^S, N^S(z), e_2^S$ and $e_2^{\theta}$}\label{fig1}
\end{figure}
On the other hand $T\psi(e')\perp e_1^S$ which implies 
\begin{equation}
 \label{2.17}
 \langle T\psi(e'),e_2^S\rangle =\sin(\theta(z))\langle T\psi(e'), e_2^\theta\rangle. 
\end{equation}
Also $\langle \partial_t\psi, e_2^S\rangle =\cos(\theta(z)).$ We arrive at 
\begin{equation}
 \label{2.18}
 \begin{split}
& \det\langle T\varphi(e'),(e^S)'\rangle\\&=\sin\theta(z)\det\left(
 \begin{matrix}
  \langle T\psi(e_2), e_2^\theta\rangle &\langle T\psi(e''),e_2^\theta\rangle\\
  \langle T\psi(e_2),(e^S)''\rangle & \langle T\psi(e''),(e^S)''\rangle 
 \end{matrix}
 \right)\\&+\cos\theta(z)\det\left(\begin{matrix}
                                 \langle d\tau, e_2\rangle &{0}\\
                                 \langle T\psi(e_2), (e^S)''\rangle & \langle T\psi(e''), (e^S)''\rangle
                                \end{matrix}
\right)\\
&= \sin\theta(z)\det T\psi+\cos\theta(z) \langle d\tau, e_2\rangle \det \langle T\psi(e''),(e^S)''\rangle.
\end{split}
\end{equation}
For the last equation we used the fact that  $\det T\psi= \det \langle T\psi(e'), (e_2^\theta, (e^S)'')\rangle$, {since $e'$ and $(e_2^\theta, (e^S)'')$
are orthonormal bases. Note that by definition, $\langle T\psi(e''),e_2^\theta\rangle=0$,
so we also get $\det T\psi=\det \langle T\psi(e''),  (e^S)''\rangle\times \langle T\psi(e_2), e_2^\theta\rangle$.}
On the other hand, we have
\begin{equation}
 \label{2.19}
 \langle d\tau,e_2\rangle =\langle T\psi (e_2), e_2^\theta\rangle \cot\theta{(z)}.
\end{equation}
{Indeed, note that
\bq
0&=&\lan T\varphi(e_2), N^S\ran\\
&=&\lan d\tau, e_2\ran \lan e_1^S, N^S\ran+\lan T\psi(e_2), N^S\ran\\
&=&\lan d\tau, e_2\ran \sin(\theta(z))-\cos(\theta(z))\lan T\psi(e_2), e_2^\theta\ran\eq
where the last term is obtained by taking into account that $ T\psi(e_2)$ is parallel to $e_2^\theta$.}
This is the change of length of the geodesic needed to stay in $S$. 
We obtain 
\begin{align*}
 \det T\varphi&=\sin\theta(z)\det T\psi +\cos\theta(z)\cot\theta(z)\det T\psi\\
 &=\f{\sin^2\theta(z)+\cos^2\theta(z)}{\sin\theta(z)}\det T\psi.
\end{align*}
This yields~\eqref{2.15}.

We arrived at 
\begin{equation}
 \label{2.20}\begin{split}
 \int_Dgh\,d\mu&=\int_{\partial D}g(y)\,\usm(dy)-\int_{\partial D}g(\psi(\tau(y),y))\det T\psi(\tau(y),y)\,\usm(dy)\\&+\int_D\langle dg,N\rangle \, d\mu.
 \end{split}
\end{equation}
this yields with~\eqref{2.15}
\begin{equation}
 \label{2.21}\begin{split}
 \int_Dgh\,d\mu&=\int_{\partial D}g(y)\,\usm(dy)-\int_{\partial D}g(\varphi(y))\sin\theta(\varphi(y))\det T\varphi(y)\,\usm(dy)\\&+\int_D\langle dg,N\rangle \, d\mu.
 \end{split}
\end{equation}
Using the change of variable $y\mapsto\varphi(y)$ and the fact that all $z\in S$ is equal to $\varphi(y_i)$, $i=1,2$, we obtain the key formula
\begin{prop}
 \label{P1}
 {With the above notations, for any smooth function $g$ defined on $D$ {such that $gh$ is integrable or bounded below}, we have:}
 \begin{equation}
 \label{2.22}\begin{split}
 \int_Dgh\,d\mu&=\int_{\partial D}g(y)\,\usm(dy)-2\int_{S}g(z)\sin\theta(z)\,\usm(dz)+\int_D\langle dg,N\rangle \, d\mu.
 \end{split}
\end{equation}
\end{prop}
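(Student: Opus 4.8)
The plan is to exploit the normal-flow parametrization of $D\setminus S'$ set up above and reduce the identity to a co-area computation followed by one integration by parts. First I would record the co-area formula \eqref{2.13}: every point of $D\setminus S'$ is written uniquely as $\psi(r,y)=\exp_y(rN(y))$ with $y\in\partial D$ and $0\le r<\tau(y)$, and the Jacobian of this change of variables is $\det T\psi(r)(y)=\exp(-\int_0^r h(\psi(s,y))\,ds)$ by \eqref{2.8}; hence for any $g$ bounded below
\[
\int_D g\,d\mu=\int_{\partial D}\Big(\int_0^{\tau(y)} g(\psi(r,y))\,e^{-\int_0^r h(\psi(s,y))\,ds}\,dr\Big)\,\usm(dy).
\]

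Next I would apply this to $gh$ and integrate by parts in $r$, using $\frac{d}{dr}e^{-\int_0^r h(\psi(s,y))\,ds}=-h(\psi(r,y))\,e^{-\int_0^r h(\psi(s,y))\,ds}$. The two endpoint contributions produce $\int_{\partial D}g(y)\,\usm(dy)$ (from $r=0$, since $\psi(0,y)=y$ and $\det T\psi(0)\equiv1$) and $-\int_{\partial D}g(\psi(\tau(y),y))\,\det T\psi(\tau(y),y)\,\usm(dy)$ (from $r=\tau(y)$), while the leftover integral of $\langle dg,\partial_r\psi\rangle=\langle dg,N\rangle$ against the same density is, reading the co-area formula backwards (recall $N=\nabla\rho_{\partial D}$ on $D\setminus S'$), exactly $\int_D\langle dg,N\rangle\,d\mu$. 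This is the preliminary identity \eqref{2.20}.

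The heart of the argument is to convert the skeleton-endpoint term $\int_{\partial D}g(\varphi(y))\det T\psi(\tau(y),y)\,\usm(dy)$, with $\varphi(y)=\psi(\tau(y),y)$ the projection of $\partial D$ onto $S'$, into an integral over $S$. For this I would prove the Jacobian comparison $\det T\psi(\tau(y_i),y_i)=\sin\theta(\varphi(y_i))\,\det T\varphi(y_i)$, $i=1,2$. At a regular point $z=\varphi(y)\in S$ choose adapted orthonormal frames: on $T_y\partial D$ a basis $(e_2,e'')$ with $e_2$ along $\nabla\tau(y)$, and on $T_zM$ the frame $(e_1^S,e_2^\theta,(e^S)'')$ with $e_1^S=N(\psi(\tau(y)-,y))$ and $(e^S)''$ spanning $T_z\varphi(\mathrm{Vect}(e''))$. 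Expanding the Gram matrix $\langle T\varphi(e'),(e^S)'\rangle$ and inserting $e_2^S=\cos\theta\,e_1^S+\sin\theta\,e_2^\theta$, $T\psi(e')\perp e_1^S$, $\langle\partial_t\psi,e_2^S\rangle=\cos\theta$, together with the identity $\langle d\tau,e_2\rangle=\langle T\psi(e_2),e_2^\theta\rangle\cot\theta(z)$ — which encodes that $\varphi$ must remain equidistant to $y_1$ and $y_2$ — the determinant collapses to $\sin^{-1}\theta(z)\,\det T\psi$, giving \eqref{2.15}.

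Finally I would change variables $y\mapsto\varphi(y)$: since $S'\setminus S$ has Hausdorff dimension at most $d-2$ it is $\usm$-null and contributes nothing, while over $S$ the map $\varphi$ is two-to-one with $\varphi^{-1}(z)=\{y_1,y_2\}$, so the area formula pushes $\det T\varphi(y)\,\usm(dy)$ forward to $2\,\usm(dz)$. Substituting \eqref{2.15} turns the skeleton term into $2\int_S g(z)\sin\theta(z)\,\usm(dz)$, and combining with \eqref{2.20} yields \eqref{2.22}. The main obstacle is precisely the Jacobian identity of the third paragraph: the bookkeeping of the adapted frames, and especially the appearance of the $\cot\theta$ factor through the constraint defining the skeleton, is the one genuinely geometric (rather than measure-theoretic) input; everything else is routine co-area and integration by parts.
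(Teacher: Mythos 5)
Your proposal follows exactly the paper's own proof: co-area parametrization \eqref{2.13}, integration by parts in $r$ to reach \eqref{2.20}, the Jacobian comparison \eqref{2.15} via the adapted frames and the $\cot\theta$ identity \eqref{2.19}, and finally the two-to-one change of variables $y\mapsto\varphi(y)$ to produce the factor $2$ and arrive at \eqref{2.22}. The argument and all key ingredients coincide with the paper's; no gaps.
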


\section{Moving sets}
\label{Section3}
\setcounter{equation}0

In this section we describe how to move a domain with smooth boundary by deformation of its boundary. We will investigate the deformation of its skeleton The deformation we will consider will have a general absolutely continuous finite variation part, together with a very specific martingale part and singular finite variation part. First we introduce some notation. 

For a domain $D$ with smooth boundary $\partial D$, $s\in \RR$, define 
\begin{equation}
 \label{3.1}
 \begin{split}
 \psi^D(s)= \psi^D(0,s) : \partial D&\to \partial D(s)\\
  y&\mapsto \psi^D(s)(y)=\psi^D(s,y)=\exp_y\left(sN^D(y)\right).
 \end{split}
\end{equation}
Here $N^D=N$ is the inward normal defined in {S}ection~\ref{Section2}.
Consider a moving domain $t\mapsto D_t$. {Be careful not to confound $D(t)$ with $D_t$, since in general they are quite different subsets.} We first assume that the deformation is sufficiently regular so that 
for all $0\le s\le t$, we can write $D_t$ as 
\begin{equation}
 \label{3.2}
 D_t=\left\{\psi^{D_s}([Z_t^{D_s}(y),\tau_{D_s}(y)],y), \quad y\in \partial D_s\right\}. 
\end{equation}
{In particular, we must have $S'_s\subset D_t$.}
Notice that in the special case where the real valued function $t\mapsto Z_t^{D_s}(y)$ does not depend on $y$, {for any $0\leq s\leq t$,} then we have 
\begin{equation}
 \label{3.3}
 D_t=D_s(Z_t^{D_s})=D_0(Z_t^{D_0}), \qquad Z_t^{D_0}=Z_t^{D_s}+Z_s^{D_0}
\end{equation}
where $D(r)$ is defined in~\eqref{2.3}, replacing distance to $\partial D$ by signed distance with positive sign inside $D$ and negative sign outside. In this situation, the skeleton is not moving, at least as long as $\pa D_t$ remains smooth {(i.e.\ until $\pa D_t$ hits $S_0'$ {or is too far outside $D_0$}),} and $t\mapsto Z_t^{D_0}$ can be allowed to be a semimartingale with singular continuous drift. 

When $t\mapsto Z_t^{D_s}(y)$ depends on $y$ the situation is a little bit more complicated. Starting from $(t,y)\mapsto Z_t^{D_0}(y)$ which is assumed to be defined on $[0,\e)\times \partial D_0$, the sets $D_t$ are defined for $0\le t<\e$, as well as the $Z_t^{D_s}(y)$, ${0\leq\, }s\le t$, $y\in D_s$. In fact, if $(y,t)\mapsto Z_t^{D_0}(y)$ is $C^1$, then one can reconstruct all $Z_t^{D_s}(y)$ with the only knowledge of $\dot Z_t^{D_t}(z)$, $z\in\partial D_t$. Let us do it for $s=0$: the map $(t,y)\mapsto \psi^{{D_0}}(t,y)$ from $(-\a,\a)\times \partial D_0$ to $M$ is a diffeomorphism on its range, for $\a{\,>0}$ sufficiently small. Let us denote $z\mapsto (\tau_0(z), \varphi_0(z))$ its inverse. Then a variation  $z+N^{D_t}(z)dZ_t^{D_t}$ corresponds to a variation $(\tau_0(z), \varphi_0(z))+(d\tau_0,T\varphi_0)N^{D_t}(z)dZ_t^{D_t}$ of the coordinates in $(-\a,\a)\times \partial D_0$.
 But this is not convenient at all, since it is not intrinsic.  {Moreover, when passing to stochastic processes and Stratonovich equations, it will involve} second derivatives of $z\mapsto (\tau_0(z), \varphi_0(z))$. So we prefer to leave the reference to $D_0$ and to always stay at the level of the moving $D_t$. 

For all $y\in \partial D_0$ we define a stochastic process $t\mapsto Y_t(y)$ representing the motion of $D_t$ satisfying $Y_0(y)=y$ and the It\^o equation in manifold with respect to the Levi Civita connection $\n$
\begin{equation}
 \label{3.4}
 dY_t(y)=d^\n Y_t(y)=\partial_1\psi^{D_t}(\cdot,Y_t(y))(dZ_t^{D_t}(Y_t(y)))=N^{D_t}(Y_t(y))dZ_t^{D_t}(Y_t(y)).
\end{equation}
Recall that fomally $d^\n Y_t(y)$ is a vector which writes in local coordinates $(y^1,\ldots, y^d)$ with the Christoffel symbols $\Gamma_{j,k}^i$:
\begin{equation}
\label{3.4.1}
d^\n Y_t(y)=\left(dY_t^i(y)+\f12 \Gamma_{j,k}^i(Y_t(y))\, d\langle Y_t^j(y),Y_t^k(y)\rangle \right)D_i(Y_t(y))
\end{equation}
where $D_i(Y_t(y))$ is the vector $\di \f{\partial}{\partial y^i}$ taken at point $Y_t(y)$.
We will always assume that the martingale part $dm_t$ of $dZ_t^{D_t}(y)$ does not depend on $y$. In this situation, the It\^o equation is equivalent to the Stratonovich one: indeed, using~\eqref{3.3}  the It\^o to Stratonovich convertion term is  $$ \f12\n_{N^{D_t}(Y_t(y))dm_t}N^{D_t}(\cdot)dm_t=\f12\n_{N^{D_t}(Y_t(y))}N^{D_t}(\cdot)d\langle m,m\rangle_t=0
$$
since $N^{D_t}(Y_t(y))$ is the speed at time $a=0$ of the geodesic $a\mapsto \psi^{D_t}(a)(Y_t(y))$.

More precisely, we will let $dZ_t^{D_t}(y)$ be of the form 
\begin{equation}
\label{3.5}
dZ_t^{D_t}(y)=H^{D_t}(Y_t(y))\,dt +dz_t
\end{equation}
where $H^{D_t}$ is a smooth function on $\partial D_t$ (which later on will be chosen to be $h^{D_t}/2$, where $h^{D_t}$ is the mean curvature of $\pa D_t$) and {$(z_t)_{t\geq 0}$} is a real valued continuous semimartingale. {We assume that Equation~\eqref{3.4} has a strong solution up to some positive stopping  time.}
 Moreover, since $dY_t(y)$ represents the motion of $\partial D_t$ and for small time the map $y'\mapsto Y_t(y')$ is a diffeomorphism from $\partial D_0$ to $\partial D_t$, writing $Y_t(y')=y$, equation~\eqref{3.4} rewrites as 
\begin{equation}
\label{3.6}
d\partial D_t(y){\df}dY_t(y')=N^{D_t}(y)\left(H^{D_t}(y)\, dt +dz_t\right).
\end{equation}

Let us now investigate the motion of the skeleton $S_t$ under this motion of $D_t$. First we remark that by local inversion theorem, at regular points of the skeleton, the variation {in Stratonovich sense} is linear and the sum of all variations 
of the concerned point at the boundary. 
 As we already remarked, the motion $dz_t$ does not change $S_t${, so this together with the linearity just mentioned implies that we have a finite variation of the skeleton}. 

Recall the situation of~\eqref{2.2} in Section~\ref{Section2}. We consider a domain $D$, $x\in S$, $y_1,y_2$ the two elements of $\partial D$ such that $\exp_{y_1}\left(\tau(y_1)N(y_1)\right)=\exp_{y_2}\left(\tau(y_2)N(y_2)\right)$, with $\tau(y_1)=\tau(y_2)$.
For $i=1,2$, we will consider a variation of the minimal geodesic from $y_i$ to $x$, represented by a Jacobi field $J_i$ satisfying $J_i(0)\in T_{y_i}M$, $J_1(1)=J_2(1)\in T_xM$, 
\begin{equation}
 \label{3.11}
 J_i(0)=\l_i N(y_i)+J_i^\perp(0), \quad J_i'(0)=\l_i'N(y_i)+(J_i^\perp)'(0),
\end{equation}
with $J_i^\perp$ orthogonal to $N(y_i)$. The motion of $S$ corresponding to the motion of $y_1$ and $y_2$ will be represented by $J_1(1)$. Since $S$ has a boundary, the observation of the orthogonal part {to $S$ of $J_1(1)$} is not sufficient. 
 
Let $\g_i$ be the projection on $M$ of $J_i$. It is the geodesic in time $1$ from $y_i$ to $x$ {(as usual in the computations of Jacobi fields, the speed is not normalized)}. Denote {$N_i(x)=\dot \g_i(1)/\|\dot\g_i(1)\|$}. Recall that the angle between $N_i(x)$ and $T_xS$ is $\theta(x)\in (0,\pi/2]$. We will also let 
\begin{equation}
 \label{3.12}
 N_1^S(x)=\f{1}{2\sin\theta(x)}(N_1(x)-N_2(x)).
\end{equation}
Figure \ref{fig2} shows the configuration of the points $x,y_1,y_2$ and the vectors $N_1(x)$, $N_2(x)$, $N_1^S(x)$.
\begin{figure}[!h]
  \includegraphics[width=10cm]{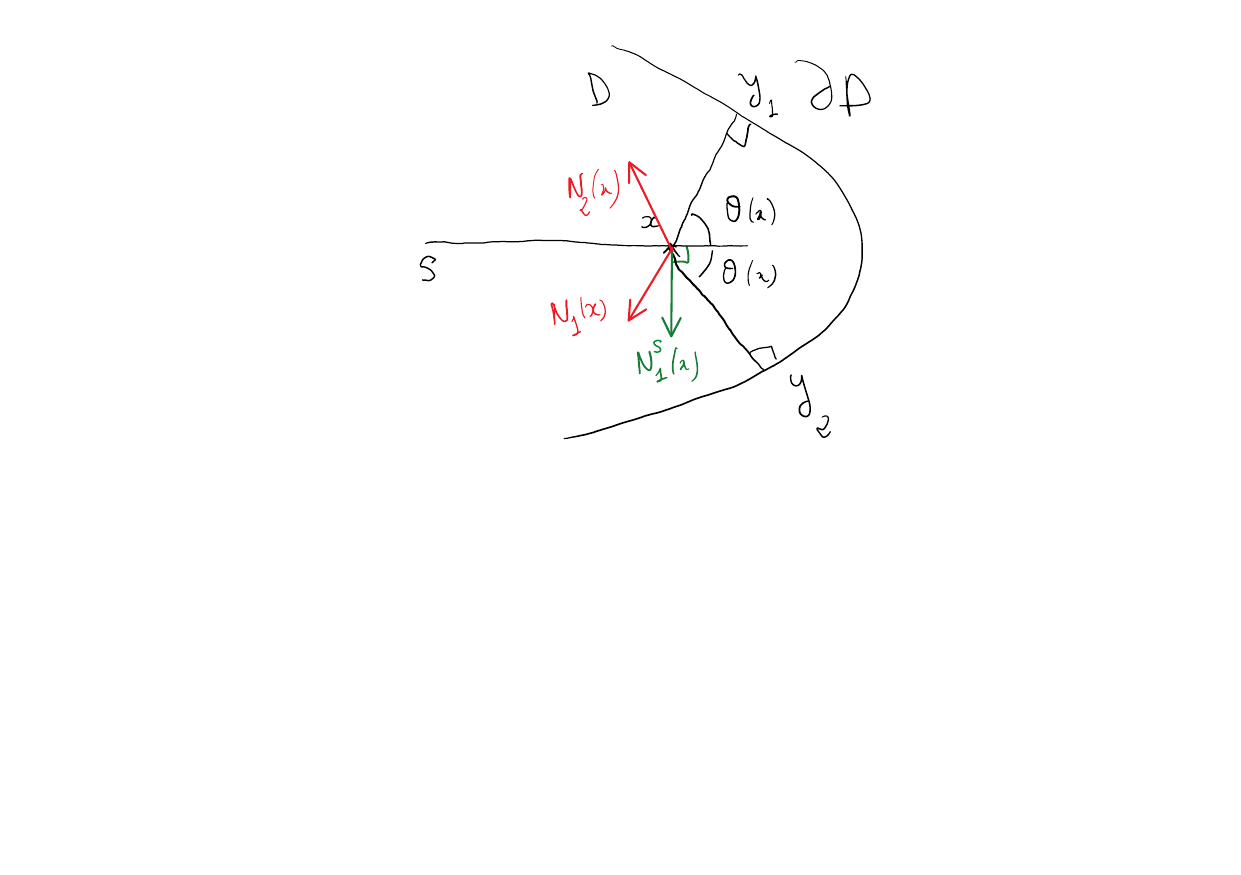}
   \caption{The points $x,y_1,y_2$ and the vectors $N_1(x), N_2(x), N_1^S(x)$}\label{fig2} 
\end{figure}
The vector
$N_1^S(x)$ is
 is the normal vector to $S$ at point $x$, in the same side as $N_1(x)$. We will  consider variations of geodesics with same  final value: 
\begin{equation}
 \label{3.13}
 J_1(1)=J_2(1)=\l N_1^S(x)+ J_1^T(1)
\end{equation}
for some $\l\in \RR$, where $J_1^T(1)\in T_xS$. Writing $\l N_1^S(x)=\f{\l}{2\sin\theta(x)} (N_1(x)-N_2(x))$ we have 
\begin{equation}
 \label{3.14}
 \begin{split}
 \langle J_1(1), N_1(x)\rangle &= \f{\l}{2\sin\theta(x)}\left(1-\cos(2\theta(x)\right)+ \langle J_1^T(1), N_1(x)\rangle\\&=\l\sin\theta(x)+ \langle J_1^T(1), N_1(x)\rangle
 \end{split}
\end{equation}
and 
\begin{equation}
 \label{3.15}
 \begin{split}
 \langle J_1(1), N_2(x)\rangle &= -\f{\l}{2\sin\theta(x)}\left(1-\cos(2\theta(x)\right)+ \langle J_1^T(1), N_{{2}}(x)\rangle\\&=-\l\sin\theta(x)+ \langle J_1^T(1), N_2(x)\rangle
 \end{split}
\end{equation}
On the other hand we require that the variation of length of the two geodesics are the same. This writes as
\begin{equation}
 \label{3.16}
 \langle J_1(1), N_1(x)\rangle -\langle J_1(0), N(y_1)\rangle=\langle J_2(1), N_2(x)\rangle -\langle J_2(0), N(y_2)\rangle
\end{equation}
or 
\begin{equation}
 \label{3.17}
\l\sin\theta(x)+ \langle J_1^T(1), N_1(x)\rangle-\l_1=-\l\sin\theta(x)+\langle J_1^T(1), N_2(x)\rangle-\l_2,
\end{equation}
which finally, with $\langle J_1^T(1), N_1(x)-N_2(x)\rangle=0$, yields $\di \l=\f{\l_1-\l_2}{2\sin\theta(x)}$, so the normal variation of $S$ is given by 
\begin{equation}
 \label{3.18}
\langle J_1(1), N_1^S(x)\rangle N_1^S(x)=\f{\l_1-\l_2}{2\sin\theta(x)}N_1^S(x).
\end{equation}
Next we will compute the tangential displacement $J^T(1)$ of $x$ in $S$. As we will see later, we will only need a Jacobi field $J_1$ such that $J_1^\perp(0)$
and  $(J_1^\perp)'(0)$ are known and
\begin{equation}
\label{3.18.1}
J_1(0)=\l_1N(y_1), \ \hbox{{i.e.}}\quad J_1^\perp(0)=0
.\end{equation}
So we know $J_1^\perp(1)$: and 
\begin{equation}
\label{3.18.2}
J_1^\perp(1)=J\left(1,0,(J_1^\perp)'(0)\right)
\end{equation}
where $J(1,u,v)$ is the value at time $1$ of the Jacobi field $J$ with $J(0)=u$ and $J'(0)=v$.
From
\begin{equation}
 \label{3.18.3}
 \begin{split}
  J_1(1)&=J_1^T(1)+\langle J_1(1),N_1^S(x)\rangle N_1^S(x)\\
  J_1(1)&=J_1^\perp(1)+\langle J_1(1),N_1(x)\rangle N_1(x)
 \end{split}
\end{equation}
we get 
\begin{equation}
 \label{3.18.4}
 J_1^T(1)=J_1^\perp (1)+\langle J_1(1),N_1(x)\rangle N_1(x)-\langle J_1(1),N_1^S(x)\rangle N_1^S(x).
\end{equation}
On the other hand we have
\begin{equation}
 \label{3.18.5}
 \begin{split}
  \langle J_1(1),N_2(x)\rangle &=\langle J_1^\perp(1),N_2(x)\rangle +\langle J_1(1),N_1(x)\rangle\langle N_1(x),N_2(x)\rangle\\
  \langle J_1(1),N_2(x)\rangle &=\langle J_1(1),N_1(x)\rangle -(\l_1-\l_2)
 \end{split}
\end{equation}
where the second equation is a direct consequence of~\eqref{3.18}.
Substracting the second equation to the first one yields
\begin{equation}
 \label{3.18.6}
 (1-\cos(2\theta(x)))\langle J_1(1),N_1(x)\rangle=\langle J_1^\perp(1),N_2(x)\rangle+\l_1-\l_2.
\end{equation}
Replacing $\langle J_1(1),N_1(x)\rangle$ in~\eqref{3.18.4} and after simplification, using~\eqref{3.12} {and \eqref{3.18},} we finally obtain the horizontal displacement
\begin{equation}
 \label{3.19}
 (J_1^T)(1)=J_1^\perp(1)+\f1{4\sin^2\theta(x)}\left({2}\langle J_1^\perp(1),N_2(x)\rangle {N_1(x)}+(\l_1-\l_2)(N_1(x)+N_2(x))\right).
\end{equation}

We are now in position to write the motion of the skeleton $S_t$ when the motion of the boundary is given by~\eqref{3.6}. For $x\in S_t$ with corresponding points $y_1$ and $y_2$ in $\partial D_t$, 
\begin{equation}\label{3.10}
dS_t^\perp (x)=\f{1}{2\sin\theta^{S_t}(x)}\left(H^{D_t}(y_1)-H^{D_t}(y_2)\right)N_1^{S_t}(x)\, dt
\end{equation}
which has finite variation. Observe that, as already mentioned, the term $dz_t$ disappears. 

Here we wrote $dS_t^\perp (x)$ for the normal variation of the regular skeleton. But as we already remarked, since $S_t$ is not a closed manifold, it can expand via the motion of its boundary. So we have to investigate the horizontal motion $dS^T(x)$. 

{Notice that $J_1^\perp)'(0)$ is the perpendicular part of the time derivative of the speed at $y_1$ of the geodesic in time~$1$ from~$y_1$ to~$x$. So 
 from} equation~\eqref{3.6} we deduce the rotation 
 \begin{equation}\label{3.20}
(J_1^\perp)'(0)\,dt= {{\rho_S(y_1)}}\n_{t}N^{D_t}(y_1)=-{{\rho_S(y_1)}}\n H^{D_t}(y_1)\,dt.
 \end{equation}
{(in the r.h.s.\ the gradient corresponds to the tangential gradient on $\pa D_t$, recall that $H^{D_t}$ is only defined on this hypersurface).}\par
 We conclude that the horizontal displacement of $x$ is $J^T_1(1)\,dt$ 
 \begin{equation}
 \begin{split}
  \label{3.21}
  &J^T_1(1)\,dt=J_1^\perp(1)\,dt+\f1{4\sin^2\theta^{S_t}(x)}\Biggl(2\langle J_1^\perp(1),N^{D_t}_2(x)\rangle N^{D_t}_1(x)\\&+(H^{D_t}(y_1)-H^{D_t}(y_2))(N^{D_t}_1(x)+N^{D_t}_2(x))\Biggr)\,dt 
  \end{split}
 \end{equation}
 where $J_1^\perp(1)=J(1,0,-{\rho_S(y_1)}\n H^{D_t}(y_1))$.
Again the processus $z_t$ does not play a role. 

{To summarize, we have the following result for the evolution of $S_t$:
\begin{thm}
\label{T3.1}
When $D_t$ evolves as~\eqref{3.6}
\begin{equation}
\label{3.7.bis}
d\partial D_t(y)=N^{D_t}(y)(H^{D_t}(y)\, dt +dz_t),
\end{equation}
the regular skeleton $S_t$ has the normal evolution~\eqref{3.10}
\begin{equation}
\label{3.10.bis}
dS_t^\perp (x)=\f{H^{D_t}(y_1)-H^{D_t}(y_2)}{4\sin^2\theta^{S_t}(x)}\left(N_1^{D_t}(x)-N_2^{D_t}(x)\right)\, dt
\end{equation}
and the tangential evolution~\eqref{3.21} which can be rewritten as 
\begin{equation}
\label{3.22}
\begin{split}
&dS_t^T(x)\\
&=p_S(J_1^\perp(1))\, dt\\
&+\left(-\f{\langle J_1^\perp(1), N_1^S(x)\rangle}{2\sin\theta^{S_t}(x)}+\f{H^{D_t}(y_1)-H^{D_t}(y_2)}{4\sin^2\theta^{S_t}(x)}\right)(N^{D_t}_1(x)+N^{D_t}_2(x))\,dt
\end{split}
\end{equation}
where $p_S$ denotes the orthogonal projection on $TS$,  $J_1^\perp(1)=J(1,0,-{\rho_S(y_1)}\n H^{D_t}(y_1))$, and $y_1$, $y_2$ are defined in Figure~\ref{fig2}.
\end{thm}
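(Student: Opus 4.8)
The theorem is, in essence, the synthesis of the differential-geometric computation already carried out in \eqref{3.11}--\eqref{3.21}, so the plan is to organize those steps and then recast the outcome in the intrinsic form \eqref{3.10.bis}--\eqref{3.22}.

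First I would reduce everything to a first-order, pathwise computation. Since the martingale part of $dz_t$ in \eqref{3.6} is independent of $y$, the It\^o equation \eqref{3.4} coincides with the corresponding Stratonovich equation, the quadratic-covariation correction vanishing because $N^{D_t}$ is the velocity field of the normal geodesics $a\mapsto\psi^{D_t}(a)(\cdot)$, so that $\nabla_{N^{D_t}}N^{D_t}=0$; hence geometric quantities may be differentiated along the flow as if $D_t$ moved smoothly. Next, by \eqref{3.3} the $y$-independent part $dz_t$ shifts $\partial D_t$ along its own normal geodesics and so leaves $S_t$ invariant; combined with the fact that at a regular configuration \eqref{2.2} the local inversion theorem --- applicable thanks to the non-degeneracy of $(t,y)\mapsto\exp_y(tN(y))$ recorded in Section~\ref{Section2} --- exhibits $x$ as a smooth function of the pair $(y_1,y_2)\in(\partial D_t)^2$, this shows that $S_t$ has finite variation and that its displacement is the linear superposition of the displacements of $y_1$ and $y_2$ induced by the drift $H^{D_t}\,dt$ alone.

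Fixing $x\in S_t$ with feet $y_1,y_2$, I would then set up, for $i=1,2$, the Jacobi field $J_i$ along the (time-$1$, unnormalized) minimal geodesic from $y_i$ to $x$, with endpoint data read off from the boundary motion: $J_i(0)=\lambda_i N^{D_t}(y_i)$ with $\lambda_i=H^{D_t}(y_i)\,dt$ (as in \eqref{3.18.1}, the boundary displacement being normal), and $(J_i^\perp)'(0)=-\rho_S(y_i)\,\nabla H^{D_t}(y_i)\,dt$ as in \eqref{3.20}, obtained by differentiating in $t$ the relation that this geodesic leaves $y_i$ in the direction $N^{D_t}(y_i)$ with speed $\rho_S(y_i)$, using $\nabla_t N^{D_t}(y_i)=-\nabla H^{D_t}(y_i)$ for the drift (the $dz_t$-part again contributing $0$). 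Imposing the two constraints that $x$ moves the same way along both geodesics, $J_1(1)=J_2(1)$, and that both geodesic lengths vary equally, \eqref{3.16}, and decomposing $J_1(1)$ along $N_1^{S_t}(x)=\frac1{2\sin\theta^{S_t}(x)}(N_1(x)-N_2(x))$ and $T_xS_t$, one solves first for the normal component, obtaining \eqref{3.18} --- i.e.\ \eqref{3.10.bis} --- and then, through \eqref{3.18.3}--\eqref{3.18.6}, for the tangential component \eqref{3.19}, in which $J_1^\perp(1)=J(1,0,(J_1^\perp)'(0))$ is the value at time $1$ of the Jacobi field with the above data.

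Finally I would put \eqref{3.19} into the announced form \eqref{3.22} by splitting $J_1^\perp(1)=p_S(J_1^\perp(1))+\langle J_1^\perp(1),N_1^{S_t}(x)\rangle N_1^{S_t}(x)$, substituting $N_1^{S_t}(x)=\frac1{2\sin\theta^{S_t}(x)}(N_1(x)-N_2(x))$ throughout (in particular inside $\langle J_1^\perp(1),N_2(x)\rangle N_1(x)$), using $\lambda_1-\lambda_2=(H^{D_t}(y_1)-H^{D_t}(y_2))\,dt$, and collecting the coefficient of $N_1(x)+N_2(x)$. The genuinely delicate points are, first, making the first-order reduction rigorous --- that the local inversion near a regular skeleton point is uniform enough and that the quadratic It\^o corrections really drop out, which ultimately rests on the regularity of $S$ and the non-degeneracy statements of Section~\ref{Section2} --- and, second, the careful bookkeeping in the $N_1^{S_t}$-substitution that turns \eqref{3.19} into \eqref{3.22}; the remaining content is precisely what is spelled out in \eqref{3.11}--\eqref{3.21}.
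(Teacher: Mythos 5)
Your proof is correct and follows essentially the same route as the paper, which in fact presents Theorem \ref{T3.1} as a summary of the computation carried out in \eqref{3.4}--\eqref{3.21}; you reproduce that computation faithfully, and your algebraic verification of the passage from \eqref{3.19}/\eqref{3.21} to \eqref{3.22} (splitting $J_1^\perp(1)$ along $N_1^S(x)$ and $T_xS_t$ and using $N_1(x)=\tfrac12(N_1(x)+N_2(x))+\sin\theta\,N_1^S(x)$) is the right bookkeeping.
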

\begin{remark}
\label{R3.1}
The points $y_1$ and $y_2$ do not play the same role in Theorem~\ref{T3.1}.  As formula~\eqref{3.10.bis} is symmetric in $y_1$ and $y_2$, formula~\eqref{3.22} is not. The reason is that if we assume the motion of $y_1$ to be normal to the boundary $\partial D_t$ and to have speed given by~\eqref{3.7.bis}, the motion of $y_2$ has no reason to be normal to the boundary: $J_2^\perp(0)$ does not vanish.
\end{remark}
}
\section{Doss-Sussman representation of It\^o's equation    \eqref{6.5mod0} }
\label{appC}
\setcounter{equation}0

In this section we adapt the results of \cite{zbMATH07470497} to our notations.
Let  the stochastic mean curvature flow be  a solution of :
\bqn{sflow}
\fo t\in[0,\tau),\,\fo y\in C_t, \qquad d \partial D_t(y)&=&\lt(dW_t+\f12 h^{D_t}(y)dt\rt)N^{D_t}(y)
\eqn
where  $C_t\df\pa D_t$, starting at $D_0$.

Let $\partial G_t$ be a solution of
\bqn{sflow2_aux}
 \lt\{\begin{array}{rcl}
G_0&=&D_0\\
\fo t\in[0,\wi\epsilon),\,\fo x\in \partial G_t, \qquad \pa_t x&=&\alpha_{\pa G_t,-W_t}(x)N^{G_t}(x)
\end{array}\rt.\eqn
for some $\wi\epsilon>0$ small enough, where $\alpha$ is defined by
\bqn{alpharhobeta}
\fo r>0,\,\fo D\in\cD_r,\,\fo x\in C,\qquad \alpha_{C,r}(x)&\df& \f12h^{\Psi(C,r)}(\psi_{C,r}(x))\eqn
and $\Psi(C,r)$  is the normal (exterior) flow starting at $C$ at time $r$ (c.f. Chapter 3 and 4 of \cite{zbMATH07470497} for notations).

Similarly to the proof of Theorem 17 from \cite{zbMATH07470497}, we  show that $D_t = \Psi(G_t,-W_t)$ is a solution of the stopped martingale problem associated to the generator $(\mathcal{D},\wi{\mathcal{L}})$ where for $f \in C^{\infty}(M) $ and $ \mathbb{F}_f (D) = \int_D f \,d\mu$, $\nu = -N $ is the exterior normal
$$ \wi{\mathcal{L}} \mathbb{F}_f (D) := \frac12 \int_{\partial D} \langle \nabla f , \nu \rangle \,d\underline{\mu} = \mathbb{F}_{ \frac12 \Delta f}(D). $$

Recall that  the equation  \eqref{sflow2_aux}, is in fact a quasiparabolic equation with coefficients that  depend on  trajectory of the Brownian motion (the meaning is trajectory by trajectory). Similarly to Section 4.1 from \cite{zbMATH07470497}, we show that the solution of  \eqref{sflow2_aux} have a regularity $ C^{1+\frac{\alpha}{2},2 + \alpha}$, for all $\alpha < 1. $

\begin{prop}
Let $ \partial{G}_t$ be a solution of \eqref{sflow2_aux}. Then $\partial D_t =  \Psi( \partial G _ t , -W_t)$ is a solution of \eqref{sflow} in the It\^o sense. 
\end{prop}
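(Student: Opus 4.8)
The statement is a Doss--Sussmann-type pathwise representation. The map $\Psi(\cdot,r)$ is a \emph{deterministic} one-parameter flow on hypersurfaces: it satisfies the cocycle identity $\Psi(\Psi(C,r),s)=\Psi(C,r+s)$, its orbit curves $a\mapsto\psi_{C,a}(y)=\exp_y\!\big(aN^C(y)\big)$ are geodesics, and it shifts signed distances affinely, $\rho^+_{\partial\Psi(C,r)}=\rho^+_{\partial C}-r$ (cf.\ Appendix~\ref{Section2}). Thus the ``stochastic'' part $N^{D_t}\,dW_t$ of \eqref{sflow} is exactly the infinitesimal action of $\Psi$, while \eqref{sflow2_aux} is the conjugated, drift-only evolution with trajectory-by-trajectory random coefficients --- the function $\alpha$ of \eqref{alpharhobeta} being chosen precisely so that the drift of $\partial G_t$, once transported along the normal flow, reproduces the mean-curvature term of $\partial D_t$. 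The plan is therefore to apply It\^o's formula to $t\mapsto\Psi(G_t,-W_t)$ and to recognise \eqref{sflow}, following the proof of Theorem~9 in \cite{CM:19}.

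First I would record the prerequisites. By parabolic regularity applied to the quasilinear equation \eqref{sflow2_aux} (as in Section~4.1 of \cite{CM:19}, trajectory by trajectory), $\partial G_t$ is of class $C^{1+\alpha/2,\,2+\alpha}$, so $\psi_{G_t,-W_t}$ and $\Psi(G_t,-W_t)$ are well defined and smooth in the space variable up to a positive stopping time $\tau$ --- the first time at which $\partial D_t$ fails to be $C^{2+\alpha}$, leaves the tubular-neighbourhood regime, or meets its skeleton. Two elementary geometric facts will be used repeatedly: because the orbit curves are geodesics, $\na_NN=0$ along them; and because $\psi_{C,r}$ only shifts signed distances, the normal component of an infinitesimal displacement of $\partial C$ is preserved when transported to $\partial\Psi(C,r)$.

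Next I would run the It\^o computation. Fix $y\in\partial G_0=\partial D_0$, let $y_t\in\partial G_t$ be the point tracking $y$ under \eqref{sflow2_aux}, and set $Y_t\df\psi_{G_t,-W_t}(y_t)\in\partial D_t$. Decompose $dY_t$ according to the variation of the base surface $G_t$ and the variation of the flow parameter $-W_t$. The base-surface contribution: $\partial G_t$ moves with (purely normal) velocity $\tfrac12\alpha_{\partial G_t,W_t}(\cdot)N^{G_t}$, and by the signed-distance shift this pushes forward to a normal velocity $\tfrac12\alpha_{\partial G_t,W_t}(y_t)\,dt$ of $\partial D_t$ at $Y_t$, which by \eqref{alpharhobeta} equals $\tfrac12 h^{D_t}(Y_t)\,dt$. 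The flow-parameter contribution: since $\partial_r\psi_{C,r}=N^{\Psi(C,r)}\circ\psi_{C,r}$ and these curves are geodesics ($\na_NN=0$), It\^o's formula applied at $r=-W_t$ produces \emph{no} second-order (Stratonovich-correction) term --- exactly the cancellation observed after \eqref{3.4} --- and leaves the martingale term $N^{D_t}(Y_t)\,dW_t$. Adding the two gives $d\partial D_t(Y_t)=N^{D_t}(Y_t)\big(dW_t+\tfrac12 h^{D_t}(Y_t)\,dt\big)$, i.e.\ \eqref{sflow} on $[0,\tau)$; the map $y\mapsto Y_t$ being a diffeomorphism onto $\partial D_t$, this holds for every boundary point.

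The main obstacle is the bookkeeping in this last step: one must verify that the first-order drift of $\partial G_t$, after transport along the normal flow, together with the (vanishing) second-order term in the $-W_t$ direction, combine to give precisely $\tfrac12 h^{D_t}$ on $\partial D_t$ with no leftover curvature terms. This rests entirely on the cocycle property of $\Psi$ and on the exact definition \eqref{alpharhobeta} of $\alpha$, and is the part that transcribes, in the present notation, the argument behind Theorem~9 in \cite{CM:19}; along the way one should also check that the sign conventions in \eqref{sflow2_aux}--\eqref{alpharhobeta} are aligned with the choice $\Psi(\cdot,-W_t)$. A secondary, softer point is that everything is valid only up to the stopping time $\tau$ and that the regularity input for \eqref{sflow2_aux} is again imported from \cite{CM:19}.
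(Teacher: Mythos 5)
Your proposal is correct and follows essentially the same route as the paper: both apply It\^o's formula to $t\mapsto\Psi(\partial G_t,-W_t)$, split the variation into the base-surface part (using the tangent map $T_1\Psi$ and the definition of $\alpha$, which is what Lemma~13 of \cite{CM:19} does in the paper's phrasing) and the flow-parameter part, and observe that the latter contributes no second-order correction because $r\mapsto\psi(r,y)$ is a geodesic ($\tfrac{d^2}{dr^2}\Psi=0$). Your remarks about the sign bookkeeping and the regularity imported from \cite{CM:19} match the caveats implicit in the paper's argument.
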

\begin{proof}
Let $ x \in \Psi( \partial G _ t , -W_t)$, we have :
\begin{equation}
\begin{split}
&d \Psi( \partial G _ t , -W_t) (x) = \\
& = T_1\Psi_{( \partial G _ t , -W_t)} (\frac{d}{dt} \partial G_t) (\Psi^{-1}(\partial G_ t,-W_t  )(x) \,d t \\
&- \nu^{\Psi( \partial G _ t , \-W_t)}(x) dW_t \\
& =\lt(dW_t+\f12h^{\Psi( \partial G _ t , -W_t)}(x)dt\rt)N^{\Psi( \partial G _ t , -W_t)}(x),
\end{split}
\end{equation}
where in the first equality we use the It\^o formula, the fact that $ t \mapsto \partial G_t $ is $C^{1 + \frac{\alpha}{2}} $, $\frac{d^2}{d^2r} \Psi(x,r) = 0$, and in the second equality we used Lemma 13 in \cite{zbMATH07470497}, i.e. 
$\partial D_t$ is a solution in 
the It\^o form :
\bqn{Ito}
 \lt\{\begin{array}{rcl}
 d \partial D_t (x) &=& (dW_t+\f12h^{\partial D_t}(x)dt)N^{\partial D_t}(x)\\
 x &\in & \partial D_t .\\
 \end{array}
 \rt.
 \eqn

\end{proof}
\begin{prop}
Conversely, if $\partial D_t $ is  a solution of \eqref{Ito} then  $\partial G _ t = \Psi (\partial D_t, W_t) $ is a solution of \eqref{sflow2_aux}.
\end{prop}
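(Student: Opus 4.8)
The plan is to mirror, in reverse, the It\^o computation of the previous Proposition. I would regard the normal flow as a smooth map $(C,r)\mapsto \Psi(C,r)$ on the Banach manifold of $C^{2+\alpha}$ hypersurfaces and apply It\^o's formula to the semimartingale $t\mapsto \Psi(\partial D_t,-W_t)$, where $(\partial D_t)$ solves \eqref{Ito}. Three elementary properties of the normal flow enter: (i) $\partial_r\Psi(C,r)$ is the velocity field of the flow, namely the inward unit normal of the current hypersurface, $\partial_r\Psi(C,r)|_x=N^{\Psi(C,r)}(\psi_{C,r}(x))$; (ii) $\partial_r^2\Psi(C,r)=0$, since the flow lines are unit-speed geodesics and hence have no acceleration (the identity $\frac{d^2}{dr^2}\Psi(x,r)=0$ already invoked above); (iii) the differential $T_1\Psi_{(C,r)}$ in the hypersurface variable sends the normal field $N^C$ of $C$ to the normal field of $\Psi(C,r)$, i.e.\ $T_1\Psi_{(C,r)}(N^C)=N^{\Psi(C,r)}\circ\psi_{C,r}$, because perturbing $C$ normally by $\varepsilon N^C$ and then flowing by $r$ agrees, to first order in $\varepsilon$, with flowing $C$ by $r+\varepsilon$.

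\emph{Vanishing of the martingale part.} By \eqref{Ito} the martingale part of $d\partial D_t$ is $-N^{\partial D_t}\,dW_t$, while the flow-time slot contributes $\partial_r\Psi\,d(-W_t)$; by (i) and (iii) these two contributions to the It\^o differential of $\Psi(\partial D_t,-W_t)$ are both colinear to $N^{\partial G_t}\circ\psi$ and, with the sign conventions of the construction, cancel. Hence $\partial G_t:=\Psi(\partial D_t,-W_t)$ has no martingale part, so $t\mapsto\partial G_t$ is of finite variation; reading the equation pathwise in $W$ as a quasilinear parabolic problem, exactly as in Section~4.1 of \cite{CM:19}, it then has regularity $C^{1+\alpha/2,\,2+\alpha}$, so that $\partial_t\partial G_t$ exists and $T\Psi,\ T^2\Psi$ may legitimately be applied to it.

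\emph{Identification of the drift.} It remains to collect the $dt$-terms in the It\^o expansion of $\Psi(\partial D_t,-W_t)$: the first-order contribution $T_1\Psi\bigl(\tfrac12 h^{\partial D_t}N^{\partial D_t}\bigr)$, the cross second-order term $T_1\partial_r\Psi$ evaluated on $N^{\partial D_t}$ against $d\langle(\partial D_\cdot)^{\mathrm{mart}},W\rangle_t$, and the Hessian term $\tfrac12\,T_1^2\Psi(N^{\partial D_t},N^{\partial D_t})\,dt$; the pure flow-time second-order term drops by (ii). Using (i)--(iii), the definition \eqref{alpharhobeta} of $\alpha$, and the Riccati identity for the variation of the mean curvature along the normal flow (the content of Lemma~13 of \cite{CM:19}), this sum collapses to $\tfrac12\,\alpha_{\partial G_t,W_t}(x)\,N^{G_t}(x)\,dt$, i.e.\ \eqref{sflow2_aux}. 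Equivalently --- and this is probably the shortest route --- once finite variation of $\partial G_t$ is known I would simply re-run the computation of the previous Proposition on $\Psi(\partial G_t,-W_t)$ leaving $\partial_t\partial G_t=:V_t$ unknown: it gives $d\Psi(\partial G_t,-W_t)=T_1\Psi(V_t)\,dt-N^{\partial D_t}\,dW_t$, and comparison with \eqref{Ito} forces $T_1\Psi(V_t)=\tfrac12 h^{\partial D_t}N^{\partial D_t}$, hence $V_t=\tfrac12\,\alpha_{\partial G_t,W_t}\,N^{G_t}$ by \eqref{alpharhobeta} and (iii).

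The hard part will not be the algebra but the regularity: to give the infinite-dimensional It\^o formula a meaning one must know a priori that a solution of \eqref{Ito} keeps a $C^{2+\alpha}$ boundary over the time interval considered, which is exactly where the quasilinear parabolic estimates of \cite{CM:19} are needed; and one must track carefully the mean-curvature variation (the Riccati term) concealed in the cross and Hessian second-order terms, which is the substance of Lemma~13 of \cite{CM:19}. Everything else is the routine conversion of the forward statement into its converse.
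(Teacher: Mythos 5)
Your proposal follows exactly the route taken in the paper: apply the chain rule (in the Banach-manifold sense) to $t\mapsto\Psi(\partial D_t,-W_t)$, observe that the martingale contributions from the two slots cancel, use $\partial_r^2\Psi=0$ and the push-forward identity $T_1\Psi_{(C,r)}(N^C)=N^{\Psi(C,r)}\circ\psi_{C,r}$ (Lemma~13 of~\cite{CM:19}) to handle the second-order terms, and identify the remaining drift with $\tfrac12\alpha_{\partial G_t,W_t}N^{G_t}$ via \eqref{alpharhobeta}. The only difference is one of phrasing: the paper collapses the second-order bookkeeping into the single remark that the Stratonovich and It\^o differentials of $\partial D_t$ coincide ($\circ d\partial D_t=d\partial D_t$), whereas you enumerate the cross and Hessian terms explicitly; both express the same semigroup property $\Psi(\Psi(C,\varepsilon),r)=\Psi(C,r+\varepsilon)$ which makes all second-order corrections in the flow direction vanish. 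Your remark that, once $\partial G_t$ is known to have finite variation, one can re-run the forward proposition with the drift $V_t$ left unknown and read it off by comparison with \eqref{Ito} is a convenient rephrasing of the inversion rather than a genuinely different argument, but it is a clean way to present the drift identification. The regularity caveat you raise (needing the $C^{1+\alpha/2,2+\alpha}$ estimate from Section~4.1 of \cite{CM:19} to justify the infinite-dimensional It\^o formula) is appropriate and is also the point stressed in the surrounding discussion in the paper.
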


\begin{proof}
Let $x \in \partial  \Psi (\partial D_t, W_t)  $
\begin{equation}
\begin{split}
&d \Psi( \partial D _ t ,  W_t) (x) \\ 
&= T_1\Psi_{( \partial D _ t , W_t)} (\circ d \partial D_t) (x) +\nu^{\Psi( \partial D _ t , W_t)}(x) dW_t \\
&=    T_1\Psi_{( \partial D _ t , W_t)} ( (dW_t+\f12h^{\partial D_t}dt)N^{\partial D_t}) (x) \\
&-N^{\Psi( \partial D _ t ,W_t)}(x) dW_t \\
&= \lt(\f12h^{\partial D_t}(  \Psi^{-1}( \partial D_t , W_t)(x)  )N^{\partial G_t}(x) dt\rt) \\
&=\f12h^{\Psi( \partial G_t , -W_t) }(  \Psi( \partial G_t , -W_t)(x)  )N^{\partial G_t}(x) dt \\
\end{split}
\end{equation}
where we use that in this case, the Stratonovich differential is equal to the It\^o's one (c.f. Appendix \ref{Section3}),  i.e. $\circ d \partial D_t(x) =  d \partial D_t  $, and $\frac{d^2}{d^2r} \Psi(x,r) = 0$.
So $\partial G _t$ is a solution of \eqref{sflow2_aux}.
\end{proof}

By the uniqueness of the solution of \eqref{sflow2_aux} (c.f. Theorem~22 in~\cite{zbMATH07470497}) and the fact that it is adapted to the filtration of $B $  we deduce that the solution of \eqref{Ito} is unique and is a strong solution. Similarly we have the uniqueness of the solution of 
$$
  d \partial D_t (x) =\left(dW_t+\f12h^{\partial D_t}(x)dt - \frac{\underline{\mu}( \partial D_t)}{\mu(D_t)} dt\right)N^{\partial D_t}(x).
$$
 Moreover, since we could also make a change of time in the It\^o equation, Equation \eqref{6.5mod0} has a unique strong solution. 

\section{Weak semi-group theory in the martingale problem sense}\label{appD}
\setcounter{equation}0

This theory has been developed in several books, see for instance Stroock and Varadhan \cite{MR2190038} or Ethier and Kurtz \cite{MR838085}.
Here we present a minimal version suitable for our purposes.\par\me
Let $V$ be a measurable state space and consider $\Omega$ a set of trajectories from $\RR_+$ to $V$.
The canonical coordinates on $\Omega$ are denoted by the $X_t$, for $t\geq 0$: for $\omega\in\Omega$, $X_t(\omega)$ is the position at time $t$ of $\omega$.
The set $\Omega$ is endowed with the sigma-field generated by the $X_t$, for $t\geq 0$.
Our first assumption is that the mapping
\bq
\Omega\times\RR_+\ni(\omega,t)&\mapsto & X_t(\omega)\in V\eq
is measurable, which usually means that ``$\Omega$ is not too big''.
\par
For $t\geq 0$, we define \bq
\cF_t&\df&\sigma( X_s\st s\in[0,t])\eq
\par
For $t\geq 0$, we will also need the time shift $\Theta_{t}$  associating to any $\omega\in\Omega$ the trajectory $\Theta_{t}(\omega)$ defined by
\bq
\fo s\geq 0,\qquad X_s(\Theta_{t}(\omega))&=&X_{s+t}(\omega)\eq
\par
We assume that $\Theta_t(\Omega)\subset \Omega$.
\par
A given family $\PP\df (\PP_x)_{x\in V}$ of probability measures on $\Omega$ is said to be \textbf{Markovian} if for any $x\in V$ and any $t\geq 0$, the image by $\Theta_t$ of  $\PP_x$ conditioned by 
$\cF_t$ is $\PP_{X_t}$. In particular, it is assumed that $\PP$ has the regularity of a Markov kernel from $V$ to $\Omega$.
\par
From now on, we suppose that a Markovian family  $\PP$ is given.
Let $\cB$ be the space of bounded and measurable functions defined on $V$.
The \textbf{semi-group} $P\df (P_t)_{t\geq 0}$ associated to $\PP$ is the family of operators acting on $\cB$ via
\bq
\fo t\geq 0,\,\fo f\in\cB,\,\fo x\in V,\qquad P_t[f](x)&\df&\EE_x[f(X_t)]\eq
\par
The Markovianity of $\PP$ implies at once the semi-group property
\bq
\fo s,t\geq 0,\qquad P_tP_s&=&P_{t+s}\eq
and in particular the elements of $P$ commute.\par
A subclass of ``regular'' functions that will be important for our purposes is $\cR$ defined as
\bq
\cR&\df&\lt\{f\in \cB\st \fo x\in V,\, \lim_{t\ri 0_+}P_t[f](x)\,=\, f(x)\rt\}\eq
Exceptionally in the above limit, we assumed that $t\geq 0$ (i.e.\ not only that $t>0$), so that by definition, for any $f\in\cR$ and $x\in V$,
$P_0[f](x)=f(x)$.\par
Let us observe that $\cR$ is left stable by the semi-group:
\begin{lemma}\label{lem1}
For any $t\geq 0$, we have $P_t[\cR]\subset\cR$. Thus for any given $f\in\cR$ and $x\in V$, the mapping
\bq
\RR_+\ni t&\mapsto& P_t[f](x)\eq
is right continuous.
\end{lemma}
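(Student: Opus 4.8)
The plan is to exploit the commutativity of the semi-group together with a dominated convergence argument. First I would fix $f\in\cR$ and $t\geq 0$ and aim to show $P_t[f]\in\cR$, that is, that $P_s[P_t[f]](x)\to P_t[f](x)$ as $s\to 0_+$ for every $x\in V$. Using $P_sP_t=P_tP_s$, which follows from the semi-group property $P_sP_t=P_{s+t}$, one rewrites $P_s[P_t[f]](x)=P_t[P_s[f]](x)=\EE_x[(P_s[f])(X_t)]$. Since $f\in\cR$, we have $P_s[f](y)\to f(y)$ pointwise in $y$ as $s\to 0_+$, and each $P_s[f]$ is bounded by $\lVe f\rVe_\infty$ and measurable; hence dominated convergence with respect to the law of $X_t$ under $\PP_x$ yields $\EE_x[(P_s[f])(X_t)]\to\EE_x[f(X_t)]=P_t[f](x)$. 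This establishes $P_t[\cR]\subset\cR$.

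For the right-continuity statement, I would fix $f\in\cR$, $x\in V$ and $t\geq 0$. For $s\geq 0$ the semi-group property gives $P_{t+s}[f](x)=P_s[P_t[f]](x)$, and since we have just shown $P_t[f]\in\cR$, the right-hand side tends to $P_t[f](x)$ as $s\to 0_+$. This is precisely the right continuity of the mapping $s\mapsto P_s[f](x)$ at the point $t$, and since $t\geq 0$ was arbitrary, the mapping $\RR_+\ni t\mapsto P_t[f](x)$ is right continuous.

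The argument involves no genuine obstacle; the only point requiring a little care is the measurability needed to invoke dominated convergence, namely that $y\mapsto P_s[f](y)$ lies in $\cB$ so that $\omega\mapsto (P_s[f])(X_t(\omega))$ is $\cF_t$-measurable and integrable under $\PP_x$. This is guaranteed by the standing assumption that $\PP$ is a Markov kernel from $V$ to $\Omega$ (so that $P_s$ maps $\cB$ into $\cB$) together with the measurability of the coordinate map $X_t$.
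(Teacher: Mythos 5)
Your proof is correct and follows essentially the same route as the paper: commute $P_s$ and $P_t$, rewrite as an expectation under $\PP_x$ of $P_s[f](X_t)$, and apply dominated convergence using the uniform bound $\lVe P_s[f]\rVe_\iy\leq\lVe f\rVe_\iy$ and the pointwise convergence coming from $f\in\cR$. The only difference is cosmetic: the paper leaves the right-continuity consequence implicit after "Thus," whereas you spell out the (standard) step $P_{t+s}[f](x)=P_s[P_t[f]](x)\to P_t[f](x)$.
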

\begin{proof}
Indeed, fix $t\geq 0$ and $f\in\cR$, we have for any $x\in V$ and $s\geq 0$, 
\bq
P_s[P_t[f]](x)&=&P_t[P_s[f]](x)\\
&=&\EE_x[ P_s[f](X_t)]]\eq
\par
We have for any $s\geq 0$,
$\lVe P_s[f]\rVe_{\iy}\leq \lVe f\rVe_{\iy}$ (where $\lVe \cdot\rVe_{\iy}$ stands for the supremum norm on $\cB$)
and since $f\in\cR$, we  get
 everywhere 
\bq
\lim_{s\ri 0_+} P_s[f](X_t)&=&f(X_t)\eq
\par
Dominated convergence implies that
\bq
\lim_{s\ri 0_+} \EE_x[ P_s[f](X_t)]]&=&\EE_x[f(X_t)]\\
&=&P_t[f]\eq
as desired.\end{proof}
\par\sm
The \textbf{generator} $L$ associated to $P$ is the operator
\bq
L\st \cD(L)&\ri&\cR\eq
defined in the following way:
the space $\cD(L)$ is the set of functions $f\in\cR$
for which there exists a function $g\in\cR$ such that
the process $M^{f,g}\df(M^{f,g}_t)_{t\geq 0}$ defined by
\bq
\fo t\geq 0,\qquad M^{f,g}_t&\df& f(X_t)-f(X_0)-\int_0^t g(X_s)\, ds\eq
is a martingale under $\PP_x$, for all $x\in V$.
\par
Let us remark that $g$ is then uniquely determined. Indeed, we have for any $x\in V$ and $t\geq 0$,
\bq
\EE_x[f(X_t)]-\EE[f(X_0)]-\EE\lt[\int_0^t g(X_s)\, ds\rt]&=&0\eq
\par
Using Fubini's lemma (applicable due to our measurability requirement on $\Omega$)  and taking into account the definition of $P$, we get
\bq
P_t[f](x)-P_0[f](x)-\int_0^tP_s[g](x)\,ds&=&0\eq
namely, recalling that we required that $g\in\cR$,
\bqn{der}
\nonumber g&=&P_0[g]\\
\nonumber &=&\lim_{t\ri 0_+} \frac1t \int_0^tP_s[g](x)\,ds\\
&=&\lim_{t\ri 0_+} \frac{P_t[f](x)-f(x)}{t}\eqn
(we came back to the usual convention that $t>0$ in the above limit) and as a by-product, we are assured of the existence of the latter limit.
\par
We define $L[f]\df g$ and $M^f\df M^{f,g}$.\par
The differentiation property \eqref{der} can be extended into
\begin{lemma}
For any $f\in\cD(L)$, $x\in V$ and $t\geq 0$, we have
\bqn{lem2}
\pa_t P_t[f](x)&=&P_t[L[f]](x)\eqn
\end{lemma}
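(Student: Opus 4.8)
The plan is to deduce \eqref{lem2} from an integral representation of $t\mapsto P_t[f](x)$ that is, in essence, already contained in the computation preceding the statement. Fix $f\in\cD(L)$, write $g\df L[f]\in\cR$, and recall that by definition the process $M^{f}=M^{f,g}$ is a $\PP_x$-martingale with $M^f_0=0$; taking $\PP_x$-expectations (here $X_0=x$ $\PP_x$-a.s., as everywhere) and using Fubini's lemma (licit because $(\omega,s)\mapsto X_s(\omega)$ is measurable and $\lVe g\rVe_\iy<\iy$) gives, for all $x\in V$ and all $t\geq0$,
\begin{equation}
\label{intrep}
P_t[f](x)=f(x)+\int_0^tP_s[g](x)\,ds .
\end{equation}
So everything reduces to differentiating the right-hand side of \eqref{intrep}.

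First I would record two properties of the integrand $s\mapsto P_s[g](x)$. It is bounded, since $\lvert P_s[g](x)\rvert\leq\lVe g\rVe_\iy$ by the contraction property of $P$. And it is right-continuous: for $s\geq0$ and $\epsilon\downarrow0$, the semi-group property gives $P_{s+\epsilon}[g](x)=P_s[P_\epsilon[g]](x)=\EE_x[P_\epsilon[g](X_s)]$, while $P_\epsilon[g]\to g$ pointwise with $\lVe P_\epsilon[g]\rVe_\iy\leq\lVe g\rVe_\iy$ because $g\in\cR$, so dominated convergence yields $P_{s+\epsilon}[g](x)\to\EE_x[g(X_s)]=P_s[g](x)$ (this is the content of Lemma \ref{lem1}). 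From \eqref{intrep} it then follows that $t\mapsto P_t[f](x)$ is Lipschitz on $\RR_+$ and that, for every $t\geq0$, it has a right derivative equal to $P_t[g](x)=P_t[L[f]](x)$, because $\tfrac1h\int_t^{t+h}P_s[g](x)\,ds\to P_t[g](x)$ as $h\downarrow0$ by right-continuity of the integrand. This is \eqref{lem2}. The same argument on the left gives a genuine two-sided derivative at every $t$ at which $s\mapsto P_s[g](x)$ is moreover left-continuous, hence for Lebesgue-a.e.\ $t$, and for every $t$ as soon as $s\mapsto P_s[g](x)$ is continuous.

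The one point that is not automatic is precisely this continuity — rather than the mere right-continuity provided by Lemma \ref{lem1} — of $s\mapsto P_s[g](x)$ for $g\in\cR$; it is what would be needed to promote the right derivative to a two-sided one at every $t$. Attempting to bound $P_t[g](x)-P_{t-h}[g](x)=\EE_x[(P_h[g]-g)(X_{t-h})]$ shows that what is missing is a control of $P_h[g]-g$ against the law of $X_{t-h}$, which cannot be extracted from the bare axioms of this minimal setting and must be supplied by the concrete state space and process. For the uses made of \eqref{lem2} in this paper this is harmless: either one reads $\partial_t$ as the right derivative — which is all that enters the telescoping identity \eqref{6.27} — or one invokes the continuity available there, the relevant semi-groups acting on continuous functionals of diffusions with continuous sample paths.
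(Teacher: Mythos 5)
Your proof is correct and follows essentially the same route as the paper: both take $\PP_x$-expectations of the martingale $M^f$ (with Fubini) to obtain the integral representation $P_{t+s}[f](x)-P_t[f](x)=\int_0^s P_{t+u}[L[f]](x)\,du$, then differentiate at $s=0_+$ using the right-continuity of $u\mapsto P_{t+u}[L[f]](x)$ supplied by Lemma~\ref{lem1}. Your closing observation that this only delivers a right derivative is accurate and applies equally to the paper's argument, which likewise invokes only right-continuity.
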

\begin{proof}
For any $f\in\cD(L)$, $x\in V$ and $t,s\geq 0$, we have
\bq
\EE_x\lt[ M_{t+s}^f-M_t^f\rt]&=&\EE_x\lt[\EE_x\lt[ M_{t+s}^f-M_t^f\vert\cF_t\rt]\rt]\\
&=&0\eq
\par
We compute that
\bq
M_{t+s}^f-M_t^f&=&f(X_{t+s})-f(X_t)-\int_t^{t+s} L[f](X_u)\, du\eq
so that
\bq
\EE_x\lt[ M_{t+s}^f-M_t^f\rt]&=&
P_{t+s}[f](x)-P_t[f](x)-\int_0^sP_{t+u}[L[f]](x)\, du\eq\par
Since $L[f]\in\cR$, the mapping $[0,s]\ni u\mapsto P_{t+u}[L[f]](x)$ is right continuous, according to Lemma \ref{lem1},
and the same argument as in \eqref{der} enables to conclude to \eqref{lem2}.\end{proof}
\par
We can now come to the main goal of this appendix:
\begin{prop}\label{pro1}
For any $t\geq 0$, $\cD(L)$ is stable by $P_t$ and on $\cD(L)$ we have $LP_t=P_tL$.
\end{prop}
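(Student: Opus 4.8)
The plan is to show that, for any fixed $t\ge 0$ and any $f\in\cD(L)$, the function $P_t[f]$ again lies in $\cD(L)$ and satisfies $L[P_t[f]]=P_t[L[f]]$; this simultaneously yields the stability $P_t[\cD(L)]\subset\cD(L)$ and the commutation $LP_t=P_tL$ on $\cD(L)$. Since $f\in\cD(L)\subset\cR$ and $L[f]\in\cR$, Lemma \ref{lem1} already gives $P_t[f]\in\cR$ and $P_t[L[f]]\in\cR$. Hence the whole task reduces to checking that the candidate process
\[
N_s\ \df\ P_t[f](X_s)-P_t[f](X_0)-\int_0^s P_t[L[f]](X_u)\,du,\qquad s\ge 0,
\]
which is exactly $M^{P_t[f],\,P_t[L[f]]}$, is a martingale under $\PP_x$ for every $x\in V$; it is bounded and $(\cF_s)_{s\ge0}$-adapted, so integrability and measurability are automatic. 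Once this is established, since the companion function in $\cR$ is unique (as noted right after the definition of $L$), we conclude $L[P_t[f]]=P_t[L[f]]$.

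The only genuine ingredient is the Markov property, used through a time shift. Writing $g\df L[f]$, Markovianity gives $P_t[f](X_s)=\EE_x[f(X_{s+t})\mid\cF_s]$ for all $s\ge0$ and, likewise, $P_t[g](X_u)=\EE_x[g(X_{u+t})\mid\cF_u]$ for all $u\ge0$. Fix $0\le s_1\le s_2$. First I would compute $\EE_x[N_{s_2}\mid\cF_{s_1}]$ using the tower property and Fubini's theorem (allowed by the standing measurability hypothesis on $\Omega$): the term $\int_0^{s_1}P_t[g](X_u)\,du$ is already $\cF_{s_1}$-measurable, whereas $\int_{s_1}^{s_2}\EE_x[P_t[g](X_u)\mid\cF_{s_1}]\,du$ collapses, via $\cF_{s_1}\subset\cF_u$ and the change of variable $v=u+t$, to $\EE_x\big[\int_{s_1+t}^{s_2+t}g(X_v)\,dv\,\big|\,\cF_{s_1}\big]$, while $\EE_x[P_t[f](X_{s_2})\mid\cF_{s_1}]=\EE_x[f(X_{s_2+t})\mid\cF_{s_1}]$. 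Next I would invoke the defining martingale property of $M^{f}=M^{f,g}$ between the shifted times $s_1+t$ and $s_2+t$, cancel the $f(X_0)$ and $\int_0^{s_1+t}g(X_v)\,dv$ terms, and condition once more on $\cF_{s_1}\subset\cF_{s_1+t}$, obtaining
\[
\EE_x\Big[f(X_{s_2+t})-\int_{s_1+t}^{s_2+t}g(X_v)\,dv\ \Big|\ \cF_{s_1}\Big]=\EE_x[f(X_{s_1+t})\mid\cF_{s_1}]=P_t[f](X_{s_1}).
\]
Substituting this identity into the expression just obtained for $\EE_x[N_{s_2}\mid\cF_{s_1}]$ produces exactly $N_{s_1}$, so $N$ is a $\PP_x$-martingale.

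Putting these pieces together gives $P_t[f]\in\cD(L)$ with $L[P_t[f]]=P_t[L[f]]$, which is the assertion. I do not anticipate a serious obstacle: everything in sight is bounded, so there are no integrability subtleties, and the one point deserving care is the bookkeeping of the time shift when transporting the martingale property of $M^f$ from the interval $[0,\cdot]$ to $[s_1+t,s_2+t]$ — this is exactly where the Markov property of the family $(\PP_x)_{x\in V}$ enters, and it goes through unchanged since $\Theta_t(\Omega)\subset\Omega$.
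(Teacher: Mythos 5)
Your proof is correct and follows essentially the same strategy as the paper's: in both cases one verifies the martingale property of $N_s = P_t[f](X_s)-P_t[f](X_0)-\int_0^s P_t[L[f]](X_u)\,du$, reducing the claim via the Markov property, Fubini, and the defining martingale property of $M^f=M^{f,L[f]}$. The only (cosmetic) difference is bookkeeping: the paper shifts by $\Theta_s$ to pass to $\EE_y$ and then invokes the previously established differentiation identity $\partial_u P_{t+u}[f]=P_{t+u}[L[f]]$, whereas you stay with conditional expectations under $\PP_x$ and apply the martingale property of $M^f$ directly on the shifted interval $[s_1+t,s_2+t]$, effectively re-deriving that identity inline.
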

\begin{proof}
Fix $f\in\cD(L)$ and $x\in V$, the assertion of the lemma amounts to checking that the process $N\df (N_s)_{s\geq 0}$ defined by
\bq
(N_s)_{s\geq 0}&\df&\lt(P_t[f](X_s)-P_t[f](X_0)-\int_0^s P_t[L[f]](X_u)\,du\rt)_{s\geq 0}\eq
is a martingale under $\PP_x$.
Consider $s'\geq s\geq 0$, we have to prove that
\bqn{zero}
\EE_x[N_{s'}-N_s\vert\cF_s]&=&0\eqn
\par
The l.h.s.\ is equal to
\bq
\lefteqn{\EE_x\lt[ P_t[f](X_{s'})-P_t[f](X_s)-\int_s^{s'} P_t[L[f]](X_u)\,du \Big\vert\cF_s\rt]}\\
&=&
\EE_x\lt[ P_t[f](X_{s'-s}\circ \Theta_s)-P_t[f](X_0\circ \Theta_s)-\int_0^{s'-s} P_t[L[f]](X_u\circ \Theta_s)\,du \Big\vert\cF_s\rt]\\
&=&\EE_y\lt[ P_t[f](X_{s'-s})-P_t[f](X_0)-\int_0^{s'-s} P_t[L[f]](X_u)\,du\rt]
\eq
where $y=X_s$. By Fubini's lemma, the previous r.h.s.\ can be written 
\bq
\lefteqn{\EE_y\lt[ P_t[f](X_{s'-s})\rt]-\EE_y\lt[ P_t[f](X_0)\rt]-\int_0^{s'-s} \EE_y[P_t[L[f]](X_u)]\,du}\\
&=&P_{t+s'-s}[f](y)-P_t[f](y)-\int_0^{s'-s} P_{t+u}[L[f]](y)\,du
\eq
\par
Taking into account \eqref{lem2}, the last integral is equal to
\bq
\int_0^{s'-s} \pa_u P_{t+u}[f](y)\,du&=&P_{t+s'-s}[f](y)-P_t[f](y)\eq
which ends the proof of \eqref{zero}.\end{proof}\par
\sm
The advantage of the above approach is that it is quite sable by optional stopping, as it is the case for martingales.
Let us succinctly give a simple example in the spirit of Section \ref{Section6}.
\par
Assume that in the above framework,  $V$ is a metric space, endowed with its Borelian measurable structure, and that $\Omega$ is the set of continuous trajectories $\cC(\RR_+,V)$. 
Furthermore, we suppose that $P$ is \textbf{Fellerian}, in the sense that it preserves $\cC_{\mathrm{b}}(V)$, the set of bounded and continuous real functions on $V$.
 \par
Let be given $A\subset V$ a closed set.
We consider $\tau$ the hitting time of $A$:
\bq
\tau&\df& \inf\{t\geq 0\st X_t\in A\}\ \in\ \RR_+\sqcup\{+\iy\}\eq
\par
Define the ``new'' process $\wi X\df(\wi X_t)_{t\geq 0}$ via
\bq
\fo t\geq 0,\qquad \wi X_t&\df& X_{t\wedge \tau}\eq
and for $x\in V$, let $\wi \PP_x$ be the image of $\PP_x$ by $\wi X$, it is still a probability measure on $\cC(\RR_+,V)$.
All notions corresponding to $\wi\PP\df(\wi \PP_x)_{x\in V}$, which is still a Markovian family,  receive a tilde.
It appears without difficulty that $\wi \cR$ is the set of functions $\wi f\in \cB$ such that there exists $f\in \cR$
with $\wi f$ coinciding with $f$ on $V\setminus A$.
The domain $\cD(\wi L)$ is the set of $\wi f\in\wi\cR$ such that there exists $f\in \cD(L)$
with $\wi f$ coinciding with $f$ on $V\setminus A$. In addition, we have
\bq
\fo x\in V,\qquad \wi L[\wi f](x)&=&\lt\{\begin{array}{ll}
L[f](x)&\hbox{, when $x\not\in A$}\\
0&\hbox{, when $x\in A$}\end{array}\rt.
\eq
This expression does not depend on the choice of $f$, due to the fact that $\PP$ is a diffusion, i.e.\ that $\Omega=\cC(\RR_+,V)$, which implies that
$L$ is a local operator (see for instance Theorem 7.29 of Schilling and Partzsch \cite{MR3234570},
they are working with Euclidean spaces, but the result can be extended to metric spaces).
\par
According to \eqref{lem2} and Proposition \ref{pro1},
we get 
\bq
\fo \wi f\in \cD(\wi L),\,\fo x\in V,\,\fo t\geq 0\qquad \pa_t\wi P_t[\wi f](x)&=&\wi P_t[\wi L[\wi f]](x)\ =\ \wi L[\wi P_t[\wi f]](x)\eq
\par
Such relations are not so obvious if we had chosen to work in a Banach setting (cf.\ e.g.\ the book of Yosida \cite{MR96a:46001}), 
considering for instance semi-groups acting on the space $\cC_{\mathrm{b}}(V)$  (endowed with the supremum norm),
since in general $\wi L$ would not naturally take values in $\cC_{\mathrm{b}}(V)$.

\section{An It\^o-Tanaka formula}\label{AppendixF}
\setcounter{equation}0

Let $M$ be a $d$-dimensional Riemannian manifold 
{and} $D\subset M$ a {compact and} connected domain with $C^2$ boundary $\partial D$, and $S$ be the regular skeleton of $D$, and $ \rho^{+}_{\partial D}$ the signed  distance to $\partial D$, which is positive inside $D$ and negative outside $D$. The notations will be the same as in Appendix \ref{Section2}.

\begin{prop}\label{PropF1}
Let $X_t$ a Brownian motion in $M$. We have the following It\^o-Tanaka formula :
$$d \rho^{+}_{\partial D} (X_t) = \langle N^D(X_t), dX_t \rangle -\frac{1}{2} h^D(X_t) dt -  \sin\left(\theta^{S}(X_t)\right)\,dL_t^{S}(X),$$
in the above formula, $N^D(x) = \n  \rho^{+}_{\partial D} (x) $ and 
$ - h^D(x) = \D \rho^{+}_{\partial D} (x)$ for $x \notin S $ , and define to be $ 0$ elsewhere, $L_t^S(X)$ is the local time defined  as in \eqref{4.2}. 
\end{prop}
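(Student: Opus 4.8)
The plan is to reduce the statement to a combination of two local pictures: away from the skeleton $S$, where $\rho^+_{\partial D}$ is a genuine $C^2$ function, and near the regular skeleton, where $\rho^+_{\partial D}$ is locally the minimum of two smooth functions. First I would recall that on $M\setminus S'$ the signed distance $\rho^+_{\partial D}$ is $C^2$ (indeed $C^\infty$ near $\partial D$ once we avoid both inner and outer skeletons), with $\n\rho^+_{\partial D}=N^D$ and $\Delta\rho^+_{\partial D}=-h^D$; so by the standard It\^o formula on a manifold, as long as $X_t$ stays in a region where $\rho^+_{\partial D}$ is smooth,
\begin{equation*}
d\rho^+_{\partial D}(X_t)=\langle N^D(X_t),dX_t\rangle-\tfrac12 h^D(X_t)\,dt.
\end{equation*}
This already gives the first two terms; the whole issue is to account for the excursions of $X_t$ across $S$, and to show they contribute exactly $-\sin(\theta^S(X_t))\,dL_t^S(X)$.

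Next I would localize near a regular point $x_0\in S$. By the description of $S$ in Appendix~\ref{Section2}, there is a neighborhood $U$ of $x_0$ and two smooth functions $\rho_1,\rho_2$ on $U$ (the distances to $\partial D$ along the two competing normal geodesics, extended smoothly) such that on $U$, $\rho^+_{\partial D}=\min(\rho_1,\rho_2)$, with $\{\rho_1=\rho_2\}=S\cap U$, and $\n\rho_1$, $\n\rho_2$ are the two unit vectors $N_1$, $N_2$ making angle $\theta^S(x_0)$ with $S$. The key geometric input is that $\n(\rho_1-\rho_2)$ is transverse to $S$ with $\|\n(\rho_1-\rho_2)\|=2\cos(\theta^S)$ (wait---one must be careful: $\n\rho_1-\n\rho_2=N_1-N_2$ has norm $2\sin(\theta^S)$ by the computation around \eqref{3.12}), so $Y_t:=(\rho_1-\rho_2)(X_t)$ is a continuous semimartingale whose martingale part has quadratic variation $\|N_1-N_2\|^2(X_t)\,dt=4\sin^2(\theta^S(X_t))\,dt$ on $\{X_t\in U\}$. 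Applying the classical It\^o--Tanaka formula for $\min(a,b)=\tfrac12(a+b)-\tfrac12|a-b|$ to $\rho_1(X_t)$, $\rho_2(X_t)$, one gets
\begin{equation*}
d\min(\rho_1,\rho_2)(X_t)=\tfrac12 d\rho_1(X_t)+\tfrac12 d\rho_2(X_t)-\tfrac12\,\mathrm{sgn}(Y_t)\,dY_t-\tfrac12\,dL_t^0(Y),
\end{equation*}
where $L^0(Y)$ is the semimartingale local time of $Y$ at $0$. On $\{X_t\notin S\}$ this reduces to $d\rho_i(X_t)$ for the active index $i$, consistent with the smooth formula; the extra term is $-\tfrac12\,dL_t^0(Y)$. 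It then remains to identify $L^0(Y)$ with the geometric local time $L^S(X)$ of \eqref{4.2} up to the right constant: using the occupation times formula and the fact that $dL_t^0(Y)$ is carried by $\{Y_t=0\}=\{X_t\in S\}$, together with $d\langle Y\rangle_t=4\sin^2(\theta^S(X_t))\,dt$ on that set, a time-change / occupation-density comparison gives $dL_t^0(Y)=2\sin(\theta^S(X_t))\,dL_t^S(X)$, whence the coefficient $-\sin(\theta^S(X_t))$ in front of $dL_t^S(X)$.

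Finally I would patch the two regimes: the set $S'\setminus S$ has codimension $\ge 2$, hence is polar for the $d$-dimensional Brownian motion $X_t$ (a classical fact, requiring $d\ge 2$; the case $d=1$ where $S$ is a point is covered by Tanaka's formula directly, cf.\ Subsection~\ref{Subsection5.1}), so with probability one $X_t$ never visits it and all the local-time contributions come from the regular part $S$. Covering the (relatively compact) regular skeleton by finitely many charts $U$ as above and using a partition of unity, or equivalently localizing between successive entrance/exit times of a neighborhood of $S$, the local identities glue into the global formula. The main obstacle I anticipate is the precise identification of the geometric local time $L^S(X)$ (defined via the $\beta$-thickening in \eqref{4.2}) with the It\^o--Tanaka local time $L^0(Y)$, including getting the factor $2\sin(\theta^S)$ exactly right: this requires comparing the normal-direction occupation density near $S$ with the level-set $\{Y=0\}$ occupation density, which is where the angle between $S$ and the geodesics enters quantitatively, and where one must be careful that the thickening $S^\beta_s$ in \eqref{4.2} is taken in the normal direction to $S$ rather than along $\n Y$.
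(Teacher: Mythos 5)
Your proposal follows essentially the same route as the paper's own proof: dismissing $S'\setminus S$ as polar, writing $\rho^+_{\partial D}$ locally near the regular skeleton as $\min(\rho_1,\rho_2)=\tfrac12(\rho_1+\rho_2)-\tfrac12|\rho_1-\rho_2|$, applying It\^o--Tanaka, and then identifying the semimartingale local time $L^0(Y)$ of $Y=(\rho_1-\rho_2)(X)$ with $2\sin(\theta^S)\,dL^S(X)$ by comparing the level-set occupation of $Y$ near $0$ with the geometric $\beta$-thickening of $S$, using $\|\n(\rho_1-\rho_2)\|=2\sin\theta^S$ on $S$. Your mid-proof self-correction lands on the right coefficient, and the final identification step is exactly the flow/tubular-neighborhood comparison the paper carries out.
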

\begin{proof}
The formula is a consequence of the It\^o formula outside the skeleton.
Since the non regular part of the skeleton has   Hausdorff dimension smaller than or equal to $d-2$, it is not visited by the Brownian motion. So we only focus on the regular skeleton. For all $x \in S$, the distance to the boundary is  the minimum of two $C^2$ functions $ f,g$ defined on some neighborhood $U$ of $ x$ in  $M$. The function $f$ (resp. $g$ ) is the distance function to a piece of $\partial D $ containing $y_1 $ (resp. $ y_2$) as in \eqref{2.2}. 
We have locally, 
$$ \rho^{+}_{\partial D}  = f \wedge g = \frac12 (f  +g ) -\frac12 \vert f-g \vert .$$
Using It\^o formula and Tanaka formula we have
\begin{align*}
d\rho^{+}_{\partial D}(X_t) &= \frac12  \Big(\frac12 \D (f+g)(X_t) dt +\langle \n (f+g) (X_t), dX_t \rangle \Big)\\ 
&- \frac12 \Big(\sign ((f-g)(X_t))d((f-g)(X_t)) + dL^{0,+}_t((f-g)(X_.))\Big),\\
\end{align*}
where $L^{0,+}_t((f-g)(X_.))= \lim_{\e \to 0^+} \frac{1}{\e}\int_0^t \un_{[0,\e]}  ((f-g)(X_s)) d\langle  (f-g)(X),(f-g)(X)  \rangle_s $.
Since locally $S = \{f-g =0 \}$ and $ \mu(S)=0$, we have
\begin{align*}
d\rho^{+}_{\partial D}(X_t) &= \frac12 \un_{X_t \notin S} \D \rho^{+}_{\partial D}(X_t) dt + \un_{X_t \notin S} \langle \n\rho^{+}_{\partial D}(X_t), dX_t \rangle -\frac12 dL^{0,+}_t((f-g)(X_.)) .\\
\end{align*}
After changing the role of  $ f$ and $g$ we get 
\begin{equation}\label{F1}
d\rho^{+}_{\partial D}(X_t) = \frac12 \un_{X_t \notin S} \D \rho^{+}_{\partial D}(X_t) dt + \un_{X_t \notin S} \langle \n\rho^{+}_{\partial D}(X_t), dX_t \rangle -\frac12 dL^{0}_t((f-g)(X_.)) ,
\end{equation}
where $$L^{0}_t((f-g)(X_.))= \lim_{\e \to 0^+} \int_0^t\frac{1}{2\e} \un_{[-\e,\e]}  ((f-g)(X_s)) \Vert \n  (f-g)\Vert^2 (X_s)\, ds . $$
In Appendix \ref{Section2} it is shown that for  $x \in S$ , $\Vert \n (f-g)(x) \Vert = 2 \sin\left(\theta^{S}(x)\right) $.

Using  the flow $ \frac{d}{dt} \gamma(t) = - \frac{\n (f-g)(\gamma(t))}{\Vert \n (f-g)(\gamma(t))\Vert^2} $ that starts at $y\in U$, we get 
$$ \{y \in M, \text { s.t. } \vert f-g \vert(y) \le \e  \} \subset   \{y \in M, \text { s.t. } \vert d_S(y) \vert \le \frac{\e}{2\sin\left(\theta^{S}(\gamma(g(y)))\right)} + o(\e)\},$$ where $d_{S}$ is the distance to $ S$.
On the other hand, using the minimal geodesic from $S$ to $y \in U$
we get 
$$ \{y \in M, \text { s.t. } \vert d_S(y) \vert  \le \e  \} \subset   \{y \in M, \text { s.t. } \vert f-g \vert(y) \le 2 \e \sin\left(\theta^{S}(P^S(y))\right) + o(\e)\}.$$
Hence $$ dL^{0}_t((f-g)(X_.)) = 2\sin\left(\theta^{S}(X_t)\right) L_t^S(X_.). $$
Together with   \eqref{F1}, this yield the Proposition. 
\end{proof}

\par
\section{Uniqueness in law of $\tilde{\mathcal{L}}$ diffusion}\label{AppendixG}
\setcounter{equation}0

Let us consider the following  generator $\widehat{\SL}$ of a stochastic modified mean curvature flow. The action of this generator and its carr\'e du champs on elementary observables are defined  as follows.
For any smooth function $k$ on $M$, consider the mapping $F_k$ on $\cD^{2+\alpha}$ defined  by
\bq
\fo D\in \cD^{2+\alpha},\qquad F_k(D)&\df& \int_D k\, d\mu\eq
For any $k,g\in \cC^\iy(M)$ and any $D\in \cD^{2+\alpha}$, 
\bqn{G.6.7}\left\{\begin{array}{rcl}
 \widehat{\SL}[F_k](D)&\df&  -\f12 \usm^{\partial D}(\langle \n k, N^D\rangle )= F_{\frac12 \D k} (D)\\
\Gamma_{\widehat{\SL}}[F_k, F_g](D)&\df& \int_{\pa D} k\, d\usm\int_{\pa D} g\, d\usm . \\
\end{array}\right.
\eqn\par
Note that $\widehat{\SL} $ has the same carr\'e du champs as the carr\'e du champs associated to  $\wi\SL $.
From now the generator  $\widehat{\SL} $ is defined as in \eqref{fLfF}.
\begin{prop}\label{prop-G1} The martingale problem associated $\widehat{\SL}$ is well-posed. 
\end{prop}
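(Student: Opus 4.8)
The plan is to reduce the well-posedness of the martingale problem for $\widehat{\SL}$ to that of $\wi\SL$, which was already settled: indeed, by Theorem \ref{6.5mod0hyp} the stochastic modified mean curvature flow \eqref{6.5mod0} admits a unique global (strong) solution, hence its martingale problem associated to $\wi\SL$ is well-posed. The key observation is the one recorded just before the statement: $\widehat{\SL}$ and $\wi\SL$ have the \emph{same} carr\'e du champ, namely $\Gamma_{\widehat\SL}[F_k,F_g](D)=\Gamma_{\wi\SL}[F_k,F_g](D)=\usm^{\partial D}(k)\,\usm^{\partial D}(g)$, and they differ only in the first-order (drift) part: $\widehat\SL[F_k](D)=-\tfrac12\usm^{\partial D}(\langle\n k,N^D\rangle)$ versus $\wi\SL[F_k](D)=\usm^{\partial D}(k)\,\bar\usm^{\partial D}(\partial D)-\tfrac12\usm^{\partial D}(\langle\n k,N^D\rangle)$, i.e. $\widehat\SL[F_k]-\wi\SL[F_k]=-\usm^{\partial D}(k)\,\bar\usm^{\partial D}(\partial D)$.

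First I would realize both generators at the level of the driving equation: a solution of the $\wi\SL$-martingale problem is (by the Doss--Sussman representation of Appendix \ref{appC} and Theorem \ref{6.5mod0hyp}) the solution $(\wi D_t)$ of \eqref{6.5mod0}, driven by a one-dimensional Brownian motion $\wi W$, while a solution of the $\widehat\SL$-martingale problem should be the solution $(\widehat D_t)$ of the stochastic mean curvature flow \eqref{sflow}, $d\partial\widehat D_t(y)=N^{\widehat D_t}(y)\big(d\widehat W_t+\tfrac12 h^{\widehat D_t}(y)\,dt\big)$; one checks directly from Proposition \ref{P4.5} (applied with $b_t=0$) that this equation produces exactly the generator $\widehat\SL$ on the algebra $\fA$, since $d\mu_t(k)=-\usm_t(k)\,d\widehat W_t-\tfrac12\usm_t(\langle dk,N^{\widehat D_t}\rangle)\,dt$ gives $\widehat\SL[F_k]$ and $\Gamma_{\widehat\SL}[F_k,F_g]$ as the martingale bracket. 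Second, I would observe that \eqref{sflow} is precisely \eqref{sflow} of Appendix \ref{appC}, whose strong existence and pathwise uniqueness were proved there (``$\partial D_t$ is a solution in the It\^o form \eqref{Ito}''), so existence for the $\widehat\SL$-martingale problem follows from that strong solution, and the required regularity $C^{1+\alpha/2,2+\alpha}$ is inherited.

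The remaining and main point is uniqueness \emph{in law}: any solution of the martingale problem for $\widehat\SL$ must have the law of this strong solution. Here I would follow the scheme of Appendix \ref{appC}: given a solution $(\widehat D_t)$ of the martingale problem, the real process $W_t\df -\int_0^t \frac{d\mu(\widehat D_s)}{\underline{\mu}(\partial\widehat D_s)}$ (well-defined up to $\tau_\e$, cf. \eqref{4.19}) is, by the form of $\widehat\SL$ and its carr\'e du champ, a one-dimensional Brownian motion with respect to $\SF^{\widehat D}$; then the Doss--Sussman change of variables $\widehat G_t=\Psi(\widehat D_t,-W_t)$ turns the equation into the \emph{random PDE} \eqref{sflow2_aux}, which is a quasilinear parabolic equation driven pathwise by $W$, hence has a unique solution adapted to the filtration of $W$. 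This forces $\widehat D_t=\Psi(\widehat G_t,-W_t)$ to be the unique strong solution, so its law is determined; since the finite-dimensional marginals of $(\widehat D_t)$ under any solution are thereby pinned down on the generating functionals $F_k$ (and by the diffusion property on all of $\fA$), the martingale problem is well-posed. The main obstacle is the justification that $W$ is a genuine Brownian motion and that the pathwise PDE \eqref{sflow2_aux} is uniquely solvable in the relevant H\"older class up to the stopping time $\tau_\e$ — but this is exactly what the arguments imported from Theorem~9 of \cite{CM:19} (reproduced in Appendix \ref{appC}) provide, and the drift difference between $\widehat\SL$ and $\wi\SL$ only changes the finite-variation part $W\mapsto\wi W=W+\int_0^\cdot\bar\usm^{\partial D_s}(\partial D_s)\,ds$, i.e. a locally bounded adapted shift, which does not affect the well-posedness argument.
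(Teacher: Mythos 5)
Your proof takes a genuinely different route from the paper's, and it contains a gap that is precisely the difficulty the paper's argument is designed to avoid.

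The paper establishes uniqueness for the $\widehat{\SL}$-martingale problem by a \emph{duality} argument in the spirit of Ethier--Kurtz: for an eigenfunction $k_\lambda$ of $\tfrac12\Delta$ on $M$, the bilinear observable $f_\lambda(x,D)=k_\lambda(x)F_{k_\lambda}(D)$ satisfies $\widehat\SL f_\lambda=\tfrac12\Delta f_\lambda$, so running an independent Brownian motion $X$ on $M$ backward in time against the domain process and exploiting the martingale relation gives \eqref{passe-passe}: $\E[f_\lambda(X_t,D_0)]=\E[f_\lambda(X_0,\widehat D_t)]$. Since the left-hand side does not depend on the particular solution $\widehat D$, and since the family $(F_{k_\lambda})$ separates domains (via moments or $L^2$ expansions in the compact case), the one-dimensional marginals are pinned down, and Theorem~4.2 of \cite{MR838085} upgrades this to uniqueness in law. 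The whole argument works entirely at the level of the functionals $F_k$ and never needs to know that $\partial \widehat D_t$ solves a boundary SDE.

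Your route, by contrast, passes through the Doss--Sussman transform, and this is where the gap is. The Doss--Sussman correspondence in Appendix~\ref{appC} is an equivalence between two \emph{pathwise} equations for the boundary, namely \eqref{Ito} and the random PDE \eqref{sflow2_aux}. To use it on a solution of the martingale problem you must first show that any martingale-problem solution $(\widehat D_t)$ actually moves its boundary along the inward normal with velocity $dW_t+\tfrac12 h^{\widehat D_t}\,dt$, i.e.\ solves \eqref{Ito}. What the martingale problem hands you directly is only the semimartingale decomposition of the scalar processes $F_k(\widehat D_t)$, together with the factored carr\'e du champ; extracting the driving one-dimensional Brownian motion $W$ from $\mu(\widehat D_t)$ is fine, but reconstructing the geometric evolution of $\partial\widehat D_t$ (identifying the normal velocity, ruling out extraneous tangential or singular components, and matching the It\^o correction with the mean-curvature term) is a nontrivial step that your proposal asserts rather than proves. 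There is additionally an implicit Yamada--Watanabe step (pathwise uniqueness for the SDE $\Rightarrow$ uniqueness in law for the martingale problem) that would also need to be spelled out in this infinite-dimensional, domain-valued setting. These are exactly the difficulties the duality argument sidesteps, which is why the paper chooses it.

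Your existence argument (building a solution from the strong solution of \eqref{sflow}, and checking via Proposition~\ref{P4.5} with $b_t=0$ that the generator is $\widehat\SL$) is fine and matches the paper. It is the uniqueness half that needs the above missing bridge before the Doss--Sussman strategy can be carried out.
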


\begin{proof}

We have already shown the existence result in  \cite{zbMATH07470497}, so it remains  to prove the uniqueness in law.
Let us  first consider the two-dimensional Euclidean case, namely $M = \mathbb{R}^2 $. For all $ \lambda \in \mathbb{R}$	 and for any  function $k_{\lambda} \in {\rm vect} (  e^{\lambda x},e^{\lambda y} ) $  we have $ \frac12 \D k_{\lambda}(x,y)= \frac{\lambda^2}{2}k_{\lambda}(x,y)$. Let  $f_{\lambda} ((x,y),D) := k_{\lambda}(x,y) F_{k_{\lambda}}(D) $, for  $(x,y) \in \mathbb{R}^2$ and $ D \in \cD^{2+\alpha} $. This function satisfies the following property:
\begin{align*}
\widehat{\SL} f_{\lambda}((x,y),D) & =  k_{\lambda}(x,y)  \widehat{\SL} F_{k_{\lambda}}(D) \\
&=  k_{\lambda}(x,y)   F_{\frac12 \D k_{\lambda}}(D)\\
&= k_{\lambda}(x,y)    F_{\frac{\lambda^2}{2}  k_{\lambda}}(D) \\
&= \frac{\lambda^2}{2}  k_{\lambda}(x,y)     F_{ k_{\lambda}}(D) \\
&= \frac12 \D k_{\lambda}( x,y)  F_{ k_{\lambda}}(D).\\
&= \frac12 \D f_{\lambda}((x,y),D)\\
\end{align*}
Let $ (X_t)_{t \ge 0}$ be a $\mathbb{R}^2$-valued Brownian motion that starts at $X_0= (x_1,x_2) \in \mathbb{R}^2$ and $(\hat{D}_t)_{t \ge 0}$ a $\widehat{\SL} $ diffusion that starts at $D_0$ independent of $(X_t)_{t \ge 0}$.
{Even if we stop the diffusion, we can assume that its lifetime is infinite and we add indicators as described  in Appendix \ref{appD}.}
For all $ 0 \le s \le t $ , we have 
  $$ df_{\lambda}(X_{t-s}, \hat{D}_s) \overset{m} {=} - \frac{1}{2} \D f_{\lambda}(X_{t-s},\hat{ D}_s)ds +  \widehat{\SL} f_{\lambda} (X_{t-s},\hat{ D}_s)ds \overset{m} {=} 0.$$ 
  Hence for all $\lambda \in \mathbb{R} $ we have
  \begin{equation}\label{passe-passe}
   \E[f_{\lambda}(X_{t}, D_0) ] = \E[f_{\lambda}(X_{0}, \hat{D}_t) ].
   \end{equation}
Since the left hand side of the above equation does not depend on the $\widehat{\SL} $ diffusion, we get that for any   $\widehat{\SL} $ diffusion $(\tilde{D}_t)_{t \ge 0}$  that starts at $D_0$ :
 
  $$  \E[f_{\lambda}(X_{0}, \hat{D}_t) ] = \E[f_{\lambda}(X_{0}, \tilde{D}_t) ] ,$$ 
  and so  $$\E[ F_{k_{\lambda}}(D_{t })] = \E[ F_{k_{\lambda}}( \tilde{D}_{t }))] .$$
  
In order to apply Theorem 4.2 of \cite{MR838085}, we have to show that the above equation characterizes the law of  the one-dimensional distribution,  i.e. we have to show that $ ( F_{k_{\lambda}}) $ is separating in the space of probability measures on $\cD^{2+\a} $. This is equivalent to separate domains. 
Let $ A, B \in \cD^{2+\a}$ such that $  F_{k_{\lambda}}(A)=  F_{k_{\lambda}}(B) $ for all $\lambda \in \mathbb{R} $ and $k_{\lambda} \in \langle  e^{\lambda x},e^{\lambda y} \rangle $, we have for all $ \lambda$:
$$ \int_A k_{ \lambda}(x,y) d\mu = \int_B k_{ \lambda}(x,y) d\mu.$$
After  successive derivations in $ \lambda$ and evaluation at $\lambda =0 $, we get for all $n \in \mathbb{N}$
$$ \int_A x^n d\mu = \int_B x^n d\mu  ,$$
$$ \int_A y^n d\mu = \int_B y^n d\mu  ,$$
  
The above computations could be done also for $\tilde{k}_{\lambda_1,\lambda_2} =  e^{\lambda_1 x + \lambda_2 y},$ since $\frac12 \D \tilde{k}_{\lambda_1,\lambda_2}  = \frac{\lambda_1^2 + \lambda_2^2}{2}  \tilde{k}_{\lambda_1,\lambda_2}, $ and after derivations in $\lambda_1,\lambda_2 $ and evaluating at $(0,0)$ we get that for all $n,m \in \mathbb{N}$:
$$ \int_A x^ny^m d\mu = \int_B x^ny^m d\mu  ,$$
hence, using the boundary regularity, we get $A= B$.\par 
{We could also apply Stone-Weierstrass' theorem to the function algebra generated by the mappings $(x,y)\mapsto e^{\lambda_1 x}$ and $(x,y)\mapsto e^{ \lambda_2 y} $}.\par
The proof is the same for all Euclidean spaces.\par
If $M$ is a compact manifold 
let 
$$f_{\lambda_i} (X,D) := k_{\lambda_i}(X) F_{k_{\lambda_i}}(D), $$ where 
$\lambda_i $ is an 
 eigenvalue of $\frac12 \D $ and $k_i$ is the associated eigenfunction (respectively the Neumann eigenvalue).
By the same computation as above  \eqref{passe-passe} is also valid for the boundary reflecting Brownian motion), to get the conclusion we have  to show that $ (F_{k_{\lambda_i}})_i $ separates domains.
Since $(k_{\lambda_i})_i$ is an orthonormal  basis of $ L^2(\mu)   $
we get that if $ A,B \in \cD^{2+\a}$ be such that for all $i$,
 $$  F_{k_{\lambda_i}}(A) = F_{k_{\lambda_i}}(B)$$ i.e $\langle \un_A ,k_{\lambda_i}  \rangle_{L^2} = \langle \un_B ,k_{\lambda_i}  \rangle_{L^2} $, then $\un_A \overset{L^2}{ = } \un_B $ 
 hence $A=B $.
 
 For the  complete manifold $M$, let $ \Omega_k$ be an exhaustion of $M$ with a  regular boundary  such that  $ D_0 \subset \Omega_k$, and stop the $\widehat{\SL} $ diffusion  when it hit $\Omega_k^c$ and use the above result for the manifold with boundary $\Omega_k $, we get  the result by localization. 
 
\end{proof}

\begin{prop}\label{prop-G2}
The martingale problem associated to $\SL$ is well-posed. 
\end{prop}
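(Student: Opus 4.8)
The plan is to derive the well-posedness of the martingale problem for $\SL$ from that of the dual diffusion~\eqref{6.5mod0} (Theorem~\ref{6.5mod0hyp}) by means of the change of probability already used in the proof of Theorem~\ref{C6.1}. Existence is nothing new: for the initial law $\SU(D_0)\otimes\delta_{D_0}$ it is Theorem~\ref{C6.1}, and for an arbitrary initial point $(x_0,D_0)\in M\times\cF^{\a,\e}$ one first solves the modified system~\eqref{6.5mod}, whose $D$-component is~\eqref{6.5mod0} and whose $X$-component is exactly~\eqref{6.10}, an It\^o equation with Lipschitz diffusion coefficient, $\s^D(\s^D)^\ast=\mathrm{Id}$, and bounded drift (as in the proof of Theorem~\ref{C6.1}), and one then passes to~\eqref{6.5} through the change of probability~\eqref{6.11}. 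The real work is uniqueness in law.

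First I would fix an arbitrary solution $(X_t,D_t)_{t\ge0}$ of the (stopped) martingale problem for $\SL$, started from $(X_0,D_0)$, and recover its It\^o structure. Applying the martingale problem to $g$, to $F_k$, to $gF_k$, and to the products $F_kF_{k'}$ and $gF_kF_{k'}$ (all admissible, since $\fA$ contains the constants), and using \eqref{6.14}, \eqref{6.16}, \eqref{6.17} and the diffusion property of $\SL$, one obtains that $(X_t)_{t\ge0}$ solves the martingale problem for $\tfrac12\D$ on $M$, that for each smooth $k$ the process $F_k(D_t)-F_k(D_0)-\int_0^t\SL[F_k](X_s,D_s)\,ds$ is a local martingale with mutual bracket $\int_0^t\usm^{\partial D_s}(k)\usm^{\partial D_s}(k')\,ds$ against its analogue for $k'$, and with bracket $-\int_0^t\usm^{\partial D_s}(k)\langle\n g,\n f^{D_s}\rangle(X_s)\,ds$ against $g(X_t)-g(X_0)-\int_0^t\tfrac12\D g(X_s)\,ds$. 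Since $(F_k)_k$ separates the domains of $\cD^{2+\a}$ (as in the proof of Proposition~\ref{prop-G1}), these relations pin down the full bracket structure of $(X_t,D_t)$; then, by the martingale representation theorem and the identity $\s^D(\s^D)^\ast=\mathrm{Id}$ of Assumption~\ref{A6.1}, I would realise $(X_t,D_t)_{t\ge0}$, possibly on an enlarged probability space $(\Omega,\PP)$, as a weak solution of~\eqref{6.5} driven by two independent Brownian motions $W$ and $W^m$ and frozen at $\tau_\e$.

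Next I would run the Girsanov step of Theorem~\ref{C6.1} backwards. The process $b_s\df\big(\usm^{\partial D_s}(\partial D_s)/\mu(D_s)+\D f^{D_s}(X_s)\big)\un_{\{s<\tau_\e\}}$ is bounded, since $\cF^{\a,\e}$ forces a uniform lower bound on $\mu(D)$ and uniform upper bounds on $\usm^{\partial D}(\partial D)$ and on $\D f^{D}$; hence $Z_t\df\exp\!\big(-\int_0^tb_s\,dW_s-\tfrac12\int_0^tb_s^2\,ds\big)$ is a true martingale, and under $\QQ$ defined by $d\QQ/d\PP|_{\SF_t}=Z_t$ the processes $\widehat W_t\df W_t+\int_0^tb_s\,ds$ and $W^m$ are independent Brownian motions while $(X_t,D_t)$ solves~\eqref{6.5mod}. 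Under $\QQ$ the $D$-component solves~\eqref{6.5mod0} driven by $\widehat W$, which is pathwise unique by Theorem~\ref{6.5mod0hyp} and the strong-solution property established in Appendix~\ref{appC}, so the $\QQ$-law of $(D_t,\widehat W_t)_{t\ge0}$ is determined by $D_0$ and $(D_t)_{t\ge0}$ is a measurable functional of $(\widehat W,D_0)$; and given $(D_t)_{t\ge0}$ the $X$-component of~\eqref{6.5mod} is exactly~\eqref{6.10}, whose law is uniquely determined by the same properties. Thus the $\QQ$-law of $(X,D,\widehat W)$ is determined by $(X_0,D_0)$, and since $d\PP/d\QQ|_{\SF_t}=Z_t^{-1}=\exp\!\big(\int_0^tb_s\,d\widehat W_s-\tfrac12\int_0^tb_s^2\,ds\big)$ is an explicit measurable functional of the path $(X_s,D_s,\widehat W_s)_{s\le t}$, so is the $\PP$-law of $(X_t,D_t)_{t\le T}$ for every $T$; together with existence this gives well-posedness. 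The step I expect to be the main obstacle is the passage, in the second paragraph, from the martingale problem stated only on the functionals $g\fF$ with $\fF\in\fA$ to a genuine weak solution of~\eqref{6.5}: one must verify that this class of test functionals is rich enough to determine all the brackets between $X_t$ and the infinite-dimensional component $D_t$, and that the driving Brownian motions $W$ and $W^m$ can be reconstructed cleanly even though $D_t$ takes values in a space of domains rather than in $\R^N$.
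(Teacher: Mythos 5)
Your proposal targets the wrong object: in Appendix~\ref{AppendixG} the symbol $\SL$ does \emph{not} denote the joint $(x,D)$-generator of Proposition~\ref{P6.1}, but rather the domain-valued generator $\wi\SL$ of~\eqref{6.7}, viewed as the Doob $h$-transform of the auxiliary generator $\widehat\SL$ introduced just before via
$\SL[F_k]=\widehat\SL[F_k]+\Gamma_{\widehat\SL}(F_1,F_k)/F_1$.
This is visible in the paper's own proof, which works only with a domain-valued process $D_t$, never invokes a primal coordinate $x$, and whose Girsanov density $M_t$ removes precisely the drift $\usm^{\partial D_t}(\partial D_t)/\mu(D_t)$. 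The paper's argument is short: (1) by Proposition~53 of~\cite{CM:19}, any $\SL$-diffusion $D_t$ admits, after a standard enlargement, a driving one-dimensional Brownian motion $B$ with $dF_k(D_t)=\SL[F_k](D_t)\,dt+\big(\int_{\partial D_t}k\,d\usm\big)dB_t$; (2) the $h$-transform relation turns the drift correction into a bounded Girsanov tilt, so under the changed measure $D_t$ solves the $\widehat\SL$-martingale problem; (3) Proposition~\ref{prop-G1} gives uniqueness for $\widehat\SL$; (4) pulling back the density, which is a measurable functional of $(D,B)$, recovers uniqueness for $\SL$. You instead set out to prove well-posedness of the joint martingale problem for the generator of $(X_t,D_t)$ from~\eqref{6.14}; this is a different, stronger statement that the paper never asserts, and it is not established here.

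Even taken on its own terms, your plan leaves the step you yourself flag unresolved: passing from the martingale problem on the algebra of test functionals $g\,\fF$, $\fF\in\fA$, to a genuine weak solution of the infinite-dimensional SDE~\eqref{6.5}, together with the reconstruction of the two independent driving Brownian motions $W$ and $W^m$. For the $D$-component this requires the enlargement result that the paper cites (\cite{CM:19}, Proposition~53); for the $X$-component you would additionally need the measurable selection of $\s^D_c$ and a joint representation theorem compatible with the $C^{2+\a}$ Fr\'echet structure, none of which is sketched. By contrast, on the statement the paper actually makes the reduction is much lighter precisely because only the one-dimensional driving noise of $D_t$ needs to be recovered. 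If you wanted to recover the paper's proof, the right skeleton is: representation of the $\wi\SL$-diffusion via a scalar Brownian motion, then Girsanov to $\widehat\SL$, then Proposition~\ref{prop-G1}.
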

\begin{proof}
Let $D_t$ be a $\SL $ diffusion that starts at $D_0$, defined on $(\Omega,\mathcal{F}^D ,\mathbb{Q})$. We first recall  that there exist an enlargement of the  probability space such that it carries a one dimensional Brownian motion $ B$ such that for  all $ k \in C^{\infty}(M) $
\begin{equation}\label{eq-G2}
F_k(D_t)=F_k(D_0)+\int_0^t \SL[F_k](D_s)\,ds +\int_0^t\sqrt{\Gamma_{\SL}[F_k,F_k]}(D_s)\, dB_s
\end{equation}
where $\sqrt{\Gamma_{\SL}[F_k,F_k]}(D) := \int_{\partial D } k\, d\sigma $, this is actually Proposition 53 in \cite{zbMATH07470497}. Note that this procedure of enlargement (Theorem 1.7 chapter V in \cite{MR1725357}) could be done by gluing the same independent Brownian motion for each   $(\Omega,\mathcal{F}^D ,\mathbb{Q})$. We denote by $ (\tilde{\Omega},\tilde{\mathcal{F}}^D ,\tilde{\mathbb{Q})}$ the enlarged probability space. Since $\SL $ is an $h$-transform of $\widehat{\SL} $ namely
$$\SL[F_k] = \widehat{\SL}[F_k] +\frac{\Gamma_{\widehat{\SL}}( F_1,F_k)}{F_1},$$
equation \eqref{eq-G2} becomes in  a differential form
\begin{equation}\label{eq-G3}
dF_k(D_t)- \widehat{\SL}[F_k](D_t) dt = (\int_{\partial D } k\, d\sigma)\big(dB_t + \frac{\usm^{\partial D_t}(\partial D_t)}{\mu(D_t)}dt\big). 
\end{equation}
 Let  
$$ M_t = e^{-\int_0^t \langle \frac{\usm^{\partial D_s}(\partial D_s)}{\mu(D_s)}, \, dB_s \rangle -\frac12 \int_0^t\big(\frac{\usm^{\partial D_s}(\partial D_s)}{\mu(D_s)} \big)^2 \,ds }    , $$
$$ \mathbb{P}_{\vert \mathcal{F}_t} = M_t\tilde{\mathbb{Q}}_{\vert \mathcal{F}_t} . $$ 
Using Girsanov transform, $D_t $ is solution of the $\widehat{\SL} $ martingale problem  on the probability space $ (\tilde{\Omega},\tilde{\mathcal{F}}^D ,\mathbb{P})$. Since $ \tilde{\mathbb{Q}} = M^{-1}\mathbb{P}$ we get the uniqueness in law of the $\SL $ diffusion by Proposition \ref{prop-G1}. 
\end{proof}
\section{Convergence in law: a key lemma}\label{appH}
\setcounter{equation}0

This Appendix is devoted to the adaptation to some domain-valued sequences of processes, of Lemma~4 in~\cite{Zheng:85}, which states stability of some time integrals under convergence in law. 
\begin{lemma}
 \label{Zheng}
 Let $\tilde\SF:=\tilde \SF^{\a,\e}$. We endow the set of continuous paths $\di \SC\left([0,\infty), M\times \tilde\SF\right)$ with the two dissimilarity measures $d_\b$,  $\b\in\{0,\a\}$, defined as:
\begin{equation}
 \label{H1}
 d_\b\left((x^1,D^1), (x^2,D^2)\right)=\sup_{t\ge 0}\rho(x^1(t),x^2(t))+\sup_{t\ge 0}  d_{\b,\tilde\SF} (D^1(t), D^2(t)),
\end{equation}
where for two domains $D$ and $D'$
\begin{equation}\label{H2}
\begin{split}
&d_{\b,\tilde\SF}(D,D')=\left\{
\begin{array}{cc}
d_{\b,D}(D,D') \wedge d_{\b,D'}(D',D)
\wedge\e&\hbox{if } \  H(D,D')<\e\\
\e &  \hbox{ otherwise.}\hfill
\end{array}
\right.
\end{split}
\end{equation}
Here $H(D,D')$ is the Hausdorff distance between $D$ and $D'$ and the distance $d_{\b,D}$ is defined  in~\eqref{dD}. 

 Let $\di (X_t^n,D_t^n,\tau_\e^n)_{t\ge 0}\df(X_t^{\d_n},D_t^{\d_n},\tau_\e^{\d_n})_{t\ge 0}$ a subsequence of~\eqref{4.4.5} converging in law to the limit defined in~\eqref{4.4.6} for the product of $d_\a$ and the Euclidean distance in $\RR_+$. 
 
 Let $f_n\st(x,D)\mapsto f_n(x,D)$ and $f\st(x,D)\mapsto f(x,D)$ be maps on $M\times \tilde\SF$ with values in some Euclidean space, and $U$ an open set in $M\times \tilde\SF$ for~$d_0$. Assume that:
 \begin{itemize}
  \item[(i)] the random variables $\di \int_0^\infty |f_n(X_s^n,D_s^n)|^p\, ds $ are uniformly bounded in probability for some $p>1$,
  \item[(ii)] in the open set $U$, the functions $f_n$ converge locally uniformly to $f$ with respect to $d_0$,  and are $d_0$-continuous,
  \item[(iii)] for a.e. $t\geq 0$, $(X_t,D_t)\in U$.
 \end{itemize}
 Then $\di \left(X_t^n, D_t^n,\int_0^t f_n(X_s^n, D_s^n)\, ds\right)_{t\ge 0}$ converges  in law to $\di \left(X_t, D_t,\int_0^t f(X_s, D_s)\, ds\right)_{t\ge 0}$ for $(d_\a,|\cdot|)$.
\end{lemma}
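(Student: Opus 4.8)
The plan is to adapt the proof of Lemma~4 in \cite{Zheng:85} to our infinite-dimensional, domain-valued setting, the only real novelty being the substitution of the Hausdorff-type dissimilarity measures $d_\b$ for the ordinary metric on a finite-dimensional path space. First I would reduce to a statement about the convergence of additive functionals: by the Skorokhod representation theorem, we may assume that the subsequence $(X^n,D^n,W^n,\wi W^n,W^{n,m},\tau_\e^n)$ converges almost surely (not merely in law) to the limit $(X,D,W,\wi W,W^m,\tau_\e)$ for the topology induced by $d_\a$ on paths and the Euclidean distance on $\RR_+$. Since $d_\a\ge d_0$, we also have a.s.\ convergence for $d_0$, which is the topology relevant to hypothesis (ii). It then suffices to prove the almost sure convergence
\begin{equation}
\label{H-reduction}
\sup_{t\in[0,T]}\left|\int_0^t f_n(X_s^n,D_s^n)\,ds-\int_0^t f(X_s,D_s)\,ds\right|\xrightarrow[n\to\infty]{}0
\end{equation}
for every fixed $T>0$, since uniform-on-compacts convergence of the integrals, together with the already-known convergence of $(X^n,D^n)$, yields joint convergence in law of the triples.

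To establish \eqref{H-reduction} I would split the difference in the standard way,
\begin{equation}
\label{H-split}
\int_0^t\bigl(f_n(X_s^n,D_s^n)-f(X_s^n,D_s^n)\bigr)\,ds+\int_0^t\bigl(f(X_s^n,D_s^n)-f(X_s,D_s)\bigr)\,ds,
\end{equation}
and treat the two pieces separately. For the first term in \eqref{H-split}, the point is that on any compact subset of $U$ the $f_n$ converge uniformly to $f$ by (ii); the trajectory $s\mapsto(X_s^n,D_s^n)$ on $[0,T]$ spends all but an arbitrarily small amount of time inside such a compact subset of $U$ — this is where hypothesis (iii) enters, via the a.s.\ convergence $(X_s^n,D_s^n)\to(X_s,D_s)$ for $d_0$ and the fact that $\{s\le T:(X_s,D_s)\notin U\}$ is Lebesgue-null, combined with a Portmanteau/Egorov argument to control the exceptional time set uniformly in $n$ — and on the complementary small-time set one bounds the contribution by H\"older's inequality using the uniform $L^p$-bound (i) with $p>1$ (so that a set of small Lebesgue measure contributes little). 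For the second term in \eqref{H-split}, $f$ is fixed and $d_0$-continuous on $U$, so $f(X_s^n,D_s^n)\to f(X_s,D_s)$ for a.e.\ $s\le T$ by (iii) and the a.s.\ $d_0$-convergence of the paths; again the $L^p$-bound (i) together with the same time-truncation argument upgrades this pointwise convergence to convergence of the time integrals (a de la Vall\'ee-Poussin / Vitali-type uniform integrability in the variable $s$ relative to Lebesgue measure on $[0,T]$).

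The main obstacle I anticipate is not any single estimate but the careful handling of the "locally uniformly with respect to $d_0$'' clause in (ii) in conjunction with the fact that the state space $M\times\tilde\SF^{\a,\e}$ is only equipped with a dissimilarity measure — $d_\b$ is not literally a metric (it is capped at $\e$ and built from chart-dependent local distances $d_{\b,D}$), so I must verify that the relevant compact sets of $U$ behave well, that "$d_0$-continuous'' and "locally uniform $d_0$-convergence'' are the right notions to make the squeeze work, and that the process $(D^n_t)_{t\ge0}$ genuinely takes values in a fixed compact subset $\wi\SF^{\a,\e}$ (guaranteed by Assumption~\ref{A4.0}, which controls the skeletons uniformly) so that the trajectories do live in a compact metrizable-enough piece of the state space. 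Once these topological points are pinned down, the probabilistic core is exactly Zheng's argument. I would also remark that the statement with $d_\a$ in the conclusion (rather than $d_0$) follows for free, since the convergence of $(X^n,D^n)$ for $d_\a$ was assumed as input and the integral component lives in a genuine Euclidean space.
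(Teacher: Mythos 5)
Your proposal takes a genuinely different route from the paper's. The paper uses (i) only to establish tightness of the integral processes $A^n_\cdot := \int_0^\cdot f_n(X^n_s, D^n_s)\, ds$, then extracts a subsequence with $(X^n, D^n, A^n)\ri(X, D, a)$, passes to a Skorohod representation, and identifies $a=A$ by a purely \emph{local} argument in time: for a.e.\ fixed $t$ the limiting path lies in a $d_\a$-compact (hence, by Arzel\`a--Ascoli, $d_0$-compact) neighbourhood $V\subset U$ on a small interval around $t$, which yields local uniform convergence $f_n\ri f$ there and differentiability of $a$ at $t$ with derivative $f(X_t,D_t)$; Meyer--Zheng then supplies absolute continuity of $a$, and Lebesgue differentiation finishes the identification. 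This cleanly avoids any need to control, uniformly in $n$, the Lebesgue measure of the random time sets $\{s\le T : (X^n_s,D^n_s)\notin K\}$, which is exactly what your global estimate demands.

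Your approach, via a direct estimate on $\sup_{t\le T}\vert A^n_t-A_t\vert$, has the merit of bypassing Meyer--Zheng (absolute continuity of the limit is automatic once $A^n\ri A$ directly), but two real gaps remain. First, you claim a.s.\ uniform-on-compacts convergence after Skorohod, yet condition (i) controls the trajectory $L^p$ norms of $f_n$ only \emph{in probability}, not a.s.; the H\"older bound on the exceptional-time contribution therefore fails on an event of small but positive probability, so you can at best obtain convergence in probability of the integrals. That still suffices for the stated conclusion in law, but your reduction must be weakened accordingly. Second, and more substantively, the Egorov/Portmanteau step that is supposed to produce a $d_0$-compact $K\subset U$ capturing most time \emph{uniformly in $n$ for $n$ large} is the technical heart of your argument, and your sketch leaves it entirely unexamined: one must exploit $d_0$-openness of $U$, $d_0$-continuity of the limit path, inner regularity of Lebesgue measure on the open full-measure set $\{s:(X_s,D_s)\in U\}$, and the same Arzel\`a--Ascoli comparison of $d_\a$ and $d_0$ that the paper uses to make $V$ compact. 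The paper's local derivative plus Meyer--Zheng plus Lebesgue differentiation mechanism is designed precisely to render this global control unnecessary, and that mechanism is the key structural idea your proposal is missing.
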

\begin{remark}
 \label{HR1}
 In the applications we will always take 
 \begin{equation}
  \label{H3}
  U=\left\{(x,D)\in M\times \tilde\SF, \ x\in D\backslash S(D)\right\},
 \end{equation}
 which is easily seen to be $d_0$-open thanks to Assumption~\ref{A4.0} on $\tilde\SF$.
\end{remark}
\begin{proof}
We will follow the proof of Lemma~4 in~\cite{Zheng:85}, but with several differences due to infinite dimensional spaces. Set for $n\in \NN$, $t\ge 0$, 
\begin{equation}
 \label{H4}
 A_t^n:=\int_0^tf_n(X_s^n, D_s^n)\, ds, \quad A_t:=\int_0^tf(X_s, D_s)\, ds.
\end{equation}
Condition (i) implies that the processes $A^n$ are tight.
To get the conclusion il is sufficient to show that all the converging subsequences have the same limit. So  assume that
\begin{equation}
\label{H5}
\left( X_t^n, D_t^n, A_t^n\right)_{t\ge 0} \overset{\SL}{\longrightarrow} \left( X_t, D_t, a_t\right)_{t\ge 0} .
\end{equation}
and let us prove that $\di \left( a_t\right)_{t\ge 0}= \left( A_t\right)_{t\ge 0}$.
By Skorohod theorem we may realize all processes 
\begin{equation}
\label{H6}
\left( X_t^n, D_t^n, A_t^n,  X_t, D_t, a_t\right)_{t\ge 0}
\end{equation}
on the same probability space $(\Om,\SF, \PP)$ in such a way that 
\begin{equation}
\label{H7}
(Z_t^n)_{t\ge 0}:=\left( X_t^n, D_t^n, A_t^n\right)_{t\ge 0} \overset{\hbox{a.s.}}{\longrightarrow} \left( X_t, D_t, a_t\right)_{t\ge 0}=:(Z_t)_{t\ge 0} .
\end{equation}
This means that $Z_t^n\to Z_t$ a.s. uniformly in $t\ge 0$. 

Fix $\om\in \Om$. Let $t>0$ be such that $(X_t(\om), D_t(\om))\in U$. For some $\e'>0$ we have $(X_s(\om), D_s(\om))\in U$ for all $s\in [t-\e', t+\e']$. The set 
\begin{equation}
\label{H8}
S:=\left\{\left(X_s(\om), D_s(\om)\right), \quad s\in [t-\e', t+\e'] \right\}
\end{equation}
is $d_\a$-compact in $M\times \tilde \SF$, so it has a $d_\a$-neighbourhood $V$ included in $U$ of the form 
\begin{equation}
\label{H9}
V=\left\{ (x,D)\in M\times \tilde\SF, \quad d_\a\left( (x,D),S\right)\le \e''    \right\}.
\end{equation}
for some small enough $ \e''>0$.
For $n$ sufficiently large, $\di \left(X_s^n(\om), D_s^n(\om)\right)\in V$ for all $s\in [t-\e', t+\e']$. 
On the other hand $V$ is bounded for the  distance~$d_\a$. This implies by Arzela-Ascoli theorem that it is compact for the  distance~$d_0$. 
We have the two following facts, the first one being an assumption on the $f_n$ and $f$, the second one being a consequence of the $d_0$-compactness of $V$
\begin{itemize}
\item[(a)] $f_n\to f$ as $n\to\infty$ uniformly in $(V,d_0)$;
\item[(b)] $f$ is uniformly continuous in $(V,d_0)$.
\end{itemize}
Then
\begin{align*}
 &\sup_{s\in[t-\e,t+\e]}\left|f_n(X_s^n(\om),D_s^n(\om))-f(X_s(\om),D_s(\om))\right|\\
 \le & \sup_{s\in[t-\e,t+\e]}\left|f_n(X_s^n(\om),D_s^n(\om))-f(X_s^n(\om),D_s^n(\om))\right|\\&+\sup_{s\in[t-\e,t+\e]}\left|f(X_s^n(\om),D_s^n(\om))-f(X_s(\om),D_s(\om))\right|.
\end{align*}
Both terms in the right converge to~$0$, the first one by~(a) and the second one by~(b).
So we have by~\eqref{H7} and the above calculation
\begin{equation}
 \label{H10}
 \left\{
 \begin{array}{cc}
  \hfill\left(A_s^n(\om)\right)_{s\in[t-\e,t+\e]}&\to \left(a_s(\om)\right)_{s\in[t-\e,t+\e]}\hfill\\
  \left((A_s^n(\om))'=f_n(X_s^n(\om), D_s^n(\om))\right)_{s\in[t-\e,t+\e]}&\to \left(f(X_s(\om), D_s(\om))\right)_{s\in[t-\e,t+\e]}
 \end{array}
 \right.
\end{equation}
both uniformly in $s\in[t-\e,t+\e]$. This implies that $a_s(\om)$ is differentiable in $(t-\e, t+\e)$ with derivative $f(X_s(\om), D_s(\om))$ and in particular at~$t$.

We have that for all $t\ge 0$, $(X_t(\om), D_t(\om))\in U$ a.s.. So for all $t\ge 0$, 
\begin{equation}
\label{H11}
\f{d}{dt}a_t(\om)=f(X_t(\om), D_t(\om))\quad\hbox{a.s.}.
\end{equation}
This implies that $\om$ a.s.
\begin{equation}
\label{H12}
\f{d}{dt}a_t(\om)=f(X_t(\om), D_t(\om))\quad\hbox{for a.e. $t$}.
\end{equation}
On the other hand  we know by~\cite{Meyer-Zheng:84} Theorem~10 that $(a_t)_{t\ge 0}$ is absolutely continuous : 
\begin{equation}
\label{H12bis}
 a_t(\om)=\int_0^t \ell_s(\om)\, ds.
\end{equation}
By Lebesgue theorem, $\om$ a.s.,  for a.e. $t\ge 0$ 
\begin{equation}
\label{H13}
\lim_{\e \searrow 0} \f1{2\e}\int_{t-\e}^{t+\e}|\ell_s(\om)-\ell_t(\om)|\, ds=0.
\end{equation}
Equalities~\eqref{H12} and~\eqref{H12bis} imply that  $\om$ a.s.
\begin{equation}
\label{H14}
\lim_{\e \searrow 0} \f1{2\e}\int_{t-\e}^{t+\e}\ell_s(\om)\, ds=f(X_t(\om), D_t(\om))\quad\hbox{for a.e. $ t$}. 
\end{equation}
On the other hand 
\begin{align*}
&\left|\f1{2\e}  \int_{t-\e}^{t+\e}\ell_s(\om)-\ell_t(\om)\, ds \right|\le \f1{2\e}\int_{t-\e}^{t+\e}|\ell_s(\om)-\ell_t(\om)|\, ds
\end{align*}
so~\eqref{H13} implies that $\om$ a.s. for a.e. $t\ge 0$
\begin{equation}
\label{H15}
\lim_{\e \searrow 0} \f1{2\e}\int_{t-\e}^{t+\e}\ell_s(\om)\, ds=\ell_t(\om).
\end{equation}
Consequently, using~\eqref{H12} and~\eqref{H15}, we get  $\om$ a.s. for a.e. $t\ge 0$
\begin{equation}
\label{H16}
\ell_t(\om)=f(X_t(\om), D_t(\om))
\end{equation}
Integrating we get $\om$-a.s. for all $t\ge 0$
\begin{equation}
\label{H17}
a_t(\om)=A_t(\om)=\int_0^t f(X_s(\om), D_s(\om))\, ds.
\end{equation}
This together with~\eqref{H4} proves the lemma.
\end{proof}

\providecommand{\bysame}{\leavevmode\hbox to3em{\hrulefill}\thinspace}

\end{document}